\providecommand{\U}[1]{\protect\rule{.1in}{.1in}}
\providecommand{\U}[1]{\protect\rule{.1in}{.1in}}
\providecommand{\U}[1]{\protect\rule{.1in}{.1in}}
\providecommand{\U}[1]{\protect\rule{.1in}{.1in}}
\providecommand{\U}[1]{\protect\rule{.1in}{.1in}}
\newcommand{\ulambda}{{\boldsymbol{\lambda}}}
\newcommand{\umu}{{\boldsymbol{\mu}}}
\newcommand{\unu}{{\boldsymbol{\nu}}}
\newcommand{\Uglov}[2]{{\Phi}^{#1}_{#2}}
\newcommand{\uemptyset }{{\boldsymbol{\emptyset}}}
\newtheorem{Th}{Theorem}[subsection]
\newtheorem{lemma}[Th]{Lemma}
\newtheorem{Cor}[Th]{Corollary}
\newtheorem{Prop}[Th]{Proposition}
\theoremstyle{remark}
\newtheorem{Rem}[Th]{Remark}{\rmfamily}
\theoremstyle{definition}
\newtheorem{Def}[Th]{Definition}{\rmfamily}
\newtheorem{exa}[Th]{Example}{\rmfamily}
\newtheorem{abs}[Th]{\bfseries}
\newcommand\blfootnote[1]{%
  \begingroup
  \renewcommand\thefootnote{}\footnote{#1}%
  \addtocounter{footnote}{-1}%
  \endgroup
}
\begin{document}

\title{Kleshchev multipartitions and extended  Young diagrams }
\author{Nicolas Jacon}
\maketitle
\date{}
\blfootnote{\textup{2010} \textit{Mathematics Subject Classification}: \textup{20C08,05E10,17B37}} 
\begin{abstract}
We give a new simple  characterization of the set of   Kleshchev multipartitions, and more generally of the set of Uglov multipartitions. These combinatorial objects play an important role in various areas of representation theory of quantum groups, Hecke algebras or finite reductive groups. 
As a consequence, we obtain a proof of a generalization of a conjecture by Dipper, James and Murphy and a generalization of the LLT algorithm for arbitrary level.

\end{abstract}

\section{Introduction}

In the $1990s$, the works of Lascoux, Leclerc, Thibon \cite{LLT} and  Ariki \cite{A} have shown the existence of strong connections between the representation theory of Ariki-Koike algebras and the representation theory of quantum groups.  In particular, Ariki's categorification theorem has shown that  the decomposition matrices of Ariki-Koike algebras in characteristic $0$ can be computed using the Kashiwara-Lusztig  canonical bases in affine type $A$. More recently, 
 even more connections have been established by the works of Rouquier \cite{Rou}, Khovanov, Lauda \cite{KL}, Brundan and Kleshchev \cite{BK} on quiver Hecke algebras.

 One important feature in this theory is a combinatorial object known as Uglov multipartition.
  These objects  
  naturally index both the simple modules of Ariki-Koike algebras at a root of unity (by the work of S. Ariki and A. Mathas \cite{AM} and by the work of M. Geck and the author \cite{GJ}) and the crystal bases for irreducible highest weight modules in affine type $A$ (see \cite[Ch. 6]{GJ} for details). Unfortunately, in general, only a recursive definition of them   is known. It is based on the construction of a combinatorial graph called the crystal graph coming from the crystal basis theory for quantum groups. The main aim of this paper is to give a new simple characterization of Uglov multipartitions and, in particular, of a special case of them known as   Kleshchev multipartitions.  
  
  Let us be more precise. Let $(l,n)\in \mathbb{Z}_{>0}^2$. Let $W$ be the complex reflection group of type $G(l,1,n)$,  the wreath product  $\mathbb{Z}/l\mathbb{Z}\wr \mathfrak{S}_n$.  Let ${\bf s}=(s_1,\ldots,s_l)\in \mathbb{Z}^l$. Then one can attach to these data, the Hecke algebra of $W$ (or Ariki-Koike algebra). It is the $\mathbb{C}$-algebra with a presentation by generators $T_0, T_1,\ldots , T_{n-1}$ and relations $(T_0-\eta^{s_1})\ldots (T_0-\eta^{s_l})=(T_i-\eta)(T_i+1)=0$ for $i=1,\ldots,n-1$ and where $\eta\in \mathbb{C}^{*}$.  Let us denote by $e\in \mathbb{Z}_{>0}\sqcup \{\infty\}$ the order of $\eta$ as a root of $1$. 
If  $e>1$, this algebra is non semisimple and,   by Ariki's categorification theorem,
  the associated decomposition matrix is given by  the Kashiwara-Lusztig canonical basis for an irreducible highest weight $\mathcal{U} (\widehat{\mathfrak{sl}}_e)$-module. It is known that this canonical basis can be naturally indexed by the vertices of the crystal graph of the module. We obtain a labelling of the simple modules for the Ariki-Koike algebra by certain $l$-tuple of partitions (or multipartitions) known as Uglov $l$-partitions, depending on ${\bf s}$ and $e$ and constructed recursively on $n$ using this combinatorial graph (see the Definition in \S \ref{ra}.) 
When $s_{i}-s_{i-1}>n-1-e$ for all $i\in \{1,\ldots,l\}$, the set of Uglov multipartitions 
  is known as the set of Kleshchev multipartitions. Historically, it is these special cases that have first appeared in the context of the representation theory of Ariki-Koike algebras.   However, we now know that all classes of Uglov multipartitions  have a natural interpretation in the representation theory of Hecke algebras using the notions of basic sets     (see also \cite{Bo,DJM2} for the connection with the cellular structure of  Ariki-Koike algebras).

 These multipartitions and the associated crystal graphs not only appear in the context of the representation theory of Hecke algebras and quantum groups but also:
 \begin{itemize}
 \item in the context of the representation theory of rational Cherednik algebras:  one can identify the so called KZ-component of the ``Cherednik crystal of irreducible representations'' with these multipartitions (see for example \cite{GO,CGG})  
\item in the context of  the representation theory for finite reductive groups (see \cite{Gm}). For $l=2$, Uglov $2$-partitions are known to label some unipotent modules of the unitary group, and the associated graph has a remarkable  interpretation in  Harish-Chandra theory \cite{GJH}. 
\end{itemize}
All these connections give strong motivations for studying theses types of multipartitions.    
  If $l=1$, the $l$-partitions may be identified with the partitions and 
 the Uglov $l$-partitions have then a very simple definition: they are given by the $e$-regular partitions (that is the partition when no nonzero part are repeated $e$ or more times.) In contrast, when $l>1$, the definition is far more complicated. 
  If $l=2$, S. Ariki, V. Kreiman, and S. Tsuchioka \cite{AKT} have    given an alternative non recursive definition of the set of Kleshchev $2$-partitions using abaci display. In the general case, a simple  characterization  was still missing.

   In this paper, we give a new simple characterization of the set of Uglov  $l$-partitions for all $l\in \mathbb{Z}_{>0}$ and all ${\bf s}\in \mathbb{Z}^l$: see Theorem \ref{main}. It thus also concerns the set of Kleshchev multipartitions, as a special case (see \S \ref{kl}.) This characterization  is still recursive on $n$ but  easier than the original definition and it does not use the crystal graph.  The proof will then be largely combinatorial. It is based on extensions of classical combinatorial definitions around the combinatorics of Young diagrams  and on the study of certain crystal isomorphisms already introduced in  \cite{JL}. In the context of Cherednik algebras, such isomorphisms can be interpreted as wall crossing functors in the sense of Losev \cite{Lo}  as it is shown in \cite{JLwc}.

 We also develop two consequences of our main result. The first one is a proof of a generalization of a conjecture by Dipper, James and Murphy \cite{DJM} stated by Graham and Lehrer in \cite[\S 5]{GL} concerning the set of Kleshchev multipartitions. A proof has been previously given in the case $l=2$ by S. Ariki and the author \cite{AJ}, for $l\in \mathbb{N}$ and  $e=\infty$ by J. Hu \cite{Hu}, and for $e=2$ by J. Hu, K. Zhou and K. Wang \cite{HZW}. We here treat the most general case $l\in \mathbb{N}_{>0}$ and $e\in \mathbb{N}_{>1}$ (without using these previous works.) The second consequence of our main result is  a direct generalization of the LLT algorithm computing the canonical bases of irreducible highest weight  $\mathcal{U} (\widehat{\mathfrak{sl}}_e)$-modules (see \S \ref{llt}). Previously, it was only available for  $l=1$ \cite{LLT} (see however \cite{LLTa} which proposes a general analogue) and  we again treat the most general case  $l\in \mathbb{N}_{>0}$.

The paper will be organized as follows. The second, the third and the forth sections present the main definitions we need to state the main result which is exposed in the fifth section. This t section also explains the strategy for the proof of the main theorem. We then  section explores a particular case of Uglov $l$-partitions which is  needed in the sequel: The FLOTW $l$-partitions. We explain and study previous results on crystal isomorphisms which are crucial for the proof of our Theorem. The proof is finalized in  the ninth  section. Then the last section gives  two consequences of our result: first  the proof of the generalized Dipper-James-Murphy conjecture. Then we  show how the generalization of the LLT algorithm is deduced from our results.    \\
\\
  {\bf Acknowledgements.} The  author is supported by Agence National de la Recherche  Acort ANR-12-JS01-0003 and  GeRepMod ANR-16-CE40-0010-01. The author thanks Chris Bowman and Thomas Gerber for useful discussions. He also thanks the referee for a lot of helpful comments and suggestions.

\section{Extended Young diagrams}
In this section, we give several necessary definitions concerning the combinatorics of 
 partitions and multipartitions. Most of them are generalizations of well-known 
  definitions. We illustrate   the new notions with examples.

 \subsection{Partitions and multipartitions}
 
\begin{abs}
Recall that a {\it partition} $\lambda$ of rank $n\in \mathbb{Z}_{>0}$ is a sequence  of non negative and non increasing integers 
 $\lambda=(\lambda_1,\ldots,\lambda_r)$ such that $\sum_{1\leq i\leq r} \lambda_i =n$. The integer $n$ is called the rank of the partition.  The integers $\lambda_i$ (for $1\leq i\leq r$) are called the parts of the partition. 
 By a slight abuse of  notation, we admit that one can add or delete to a partition 
as many part $0$ as we want without changing it.
 The empty partition
  is denoted by $\emptyset$ (and is thus identified with the partitions $(0)$, $(0,0)$, etc.) 
We denote by $\ell (\lambda)$ the minimal integer  
such that $\lambda_{\ell (\lambda)}=0$ with the convention that $\ell(\emptyset)=0$. Let $l\in \mathbb{Z}_{>0}$,
  an {\it $l$-partition} $\ulambda$ of $n$ is an $l$-tuple $(\lambda^1,\ldots,\lambda^l)$ 
   of partitions  such that the sum of the rank of the partitions 
    $\lambda^j$ with $1\leq j\leq l$ is $n$.  The integer $n$ is called the rank of the $l$-partition.  The unique $l$-partition $(\emptyset,\ldots,\emptyset)$ of rank $0$ is denoted by $\uemptyset$. We denote by $\Pi^l (n)$ 
 the set of all $l$-partitions of rank $n$ and by $\Pi^l$ the set of all $l$-partitions. 

\end{abs}
\begin{abs}\label{ext}
Let $l\in \mathbb{Z}_{>0}$. Let $\ulambda$ be an $l$-partition. The {\it nodes} of the $l$-partition $\ulambda$ are the elements 
$(a,b,c)$ where $a\in \{1,\ldots,\ell(\lambda^c)\}$, $b\in \{1,\ldots,\lambda_a^c\}$  and 
$c\in  \{1,\ldots, l\}$. The set of all nodes of $\ulambda$ is called the {\it  Young diagram} of $\ulambda$  and it is denoted by 
$\mathcal{Y} (\ulambda)$.

 The {\it extended nodes} of the $l$-partition $\ulambda$ are the 
 following elements of $\mathbb{Z}_{\geq 0} \times \mathbb{Z}_{\geq 0}\times \{1,\ldots, l\}$:
\begin{enumerate}
\item the elements of $\mathcal{Y} (\ulambda)$,
\item the elements of the form $(0,b,c)$ where $b>\lambda^c_1$ and $c\in  \{1,\ldots, l\}$,
\item the elements of the form  $(a,0,c)$ where $a>\ell (\lambda^c)$ and $c\in \{1,\ldots,l\}$.
\end{enumerate}
The set of extended nodes of $\ulambda$  is called the {\it extended Young diagram} of $\ulambda$  and it is denoted by $\mathcal{Y}^{\textrm{ext}} (\ulambda)$. The nodes which are in the extended Young diagram but not in the Young diagram are called {\it virtual nodes}. So the extended nodes consist in the set of nodes together with the set of virtual nodes. 

 The extended Young diagram has infinite cardinal and  contains
 the usual Young diagram  $\mathcal{Y} (\ulambda)$.
   It is convenient to represent it as an $l$-tuple of array of (infinite) boxes as 
 in the case of Young diagram.  
  \end{abs}
  \begin{exa}
  Let $l=3$ and take the $3$-partition $(3.1.1,1,2.1)$ of $n=9$. The  extended Young diagram is given as follows. 
  
\vspace{1cm}  
  
  \centerline{
 \Bigg(\  \ \ \ \ytableausetup
{mathmode}\begin{ytableau}
\none  &  \none  & \none  & \none  & \ & \ &\ & \none[\dots]& \none[\dots] \\
\none  &  \bullet & \bullet & \bullet \\
\none & \bullet \\
\none & \bullet \\  
\  \\
\  \\
\  \\
\none[\vdots] \\
 \none[\vdots] \\
\end{ytableau},
\begin{ytableau}
\none  &  \none  & \ & \ &\ & \none[\dots]& \none[\dots] \\
\none  &  \bullet \\  
\  \\
\  \\
\  \\
\none[\vdots] \\
 \none[\vdots] \\
\end{ytableau},
\begin{ytableau}
\none  &  \none  & \none   & \ & \ &\ & \none[\dots]& \none[\dots] \\
\none  &  \bullet & \bullet  \\
\none & \bullet \\  
\  \\
\  \\
\  \\
\none[\vdots] \\
 \none[\vdots] \\
\end{ytableau}\  \ \ \  \Bigg)}

 The boxes containing a bullet above correspond to the boxes of the usual Young diagram.
 \end{exa}
 \subsection{Order on nodes}
\begin{abs} The combinatoric that we now explain is adapted to the study of the representation theory of Fock spaces. We will explain later in which way. 
Let $e\in \mathbb{Z}_{>1}$  and let ${\bf s}=(s_1,\ldots,s_l)\in \mathbb{Z}^l$. One can attach to each extended node $\gamma=(a,b,c)$ of the extended Young diagram its {\it content} (depending  on the choice of ${\bf s}$):  
$$\operatorname{cont} (\gamma)=b-a+s_c\in \mathbb{Z}.$$
We denote also $\operatorname{comp} (\gamma)=c$. 
By definition, the {\it residue} (depending  on the choice of ${\bf s}$ and $e$) $\operatorname{res} (\gamma)$ of the extended node $\gamma$ is the content modulo $e$. Throughout the paper, $\mathbb{Z}/e\mathbb{Z}$ will be identified with $\{0,\ldots,e-1\}$. 

If $\operatorname{res} (\gamma)=\mathfrak{j}$ then we say that $\gamma$ is a (extended) $\mathfrak{j}$-node. Again, it is convenient to represent the content  of each extended node of an $l$-partition in the associated box of  the (representation of the) associated extended Young diagram as in the next example.

The {\it boundary} of the extended Young diagram is by definition the subset  of $\mathcal{Y}^{\textrm{ext}} (\ulambda)$ consisting of the elements $(a,b,c)\in \mathcal{Y} (\ulambda)$ such that $(a,b+1,c)$ or $(a+1,b,c)$ are not in 
 $\mathcal{Y}^{\textrm{ext}} (\ulambda)$. The {\it vertical boundary} of the extended Young diagram is by definition the subset  of $\mathcal{Y}^{\textrm{ext}} (\ulambda)$ consisting of the elements $(a,b,c)\in \mathcal{Y} (\ulambda)$ such that $(a,b+1,c)$ is not in $\mathcal{Y}^{\textrm{ext}} (\ulambda)$. So it contains  nodes of the Young diagram: the nodes of the vertical boundary,  together with the virtual nodes  of type $(3)$ in \S \ref{ext}: the virtual nodes of the vertical boundary.  
 
 The {\it horizontal boundary} of the extended Young diagram is by definition the subset  of $\mathcal{Y}^{\textrm{ext}} (\ulambda)$ consisting of the elements $(a,b,c)\in \mathcal{Y} (\ulambda)$ such that $(a+1,b,c)$ is not in $\mathcal{Y}^{\textrm{ext}} (\ulambda)$.  So it contains some nodes of the Young diagram: the nodes of the horizontal boundary,  together with the virtual nodes  of type $(2)$ in \S \ref{ext}: the virtual nodes of the horizontal boundary.

\end{abs}
\begin{exa}
  Let $l=3$ and take the $3$-partition $\ulambda=(3.1.1,1,2.1)$ with ${\bf s}=(0,1,4)$. The  extended Young diagram 
   with content is:
  
\vspace{1cm}  
  
  \centerline{\small{
 \Bigg(\  \ \ \ \ytableausetup
{mathmode}\begin{ytableau}
\none  &  \none  & \none  & \none  & *(lightgray){4} & *(lightgray){5} & *(lightgray){6} & \none[\dots]& \none[\dots] \\
\none  &  {\bf 0} & *(lightgray){\bf 1} & *(lightgray){\bf 2} \\
\none & *(lightgray)\overline{\bf 1} \\
\none & *(lightgray)\overline{\bf 2} \\  
*(lightgray)\overline{4}  \\
*(lightgray)\overline{5}  \\
*(lightgray)\overline{6}  \\
\none[\vdots] \\
 \none[\vdots] \\
\end{ytableau},
\begin{ytableau}
\none  &  \none  & *(lightgray)3 & *(lightgray)4 & *(lightgray)5 & \none[\dots]& \none[\dots] \\
\none  &  *(lightgray){\bf 1} \\  
*(lightgray)\overline{1} \\
*(lightgray)\overline{2}  \\
*(lightgray)\overline{3}  \\
\none[\vdots] \\
 \none[\vdots] \\
\end{ytableau},
\begin{ytableau}
\none  &  \none  & \none   & *(lightgray)7 & *(lightgray)8 & *(lightgray)9 & \none[\dots]& \none[\dots] \\
\none  &  {\bf 4} &*(lightgray){\bf 5}  \\
\none & *(lightgray){\bf 3} \\  
*(lightgray)1  \\
*(lightgray)0  \\
*(lightgray)\overline{1}  \\
\none[\vdots] \\
 \none[\vdots] \\
\end{ytableau}\  \ \ \  \Bigg)}}

The contents in bold correspond to the content of the nodes of the usual Young diagram. The boxes are colored in  gray if they correspond to extended nodes. The notation $\overline{j}$ stands  for  $-j$. 
\begin{itemize}
\item The vertical boundary contains:
\begin{itemize}
\item The nodes: $(2,1,1)$, $(3,1,1)$, $(1,3,1)$, $(1,1,2)$, $(1,2,3)$, $(2,1,3)$. 
\item The virtual nodes: $(a_1,0,1)$ for $a_1\geq 4$, $(a_2,0,2)$ for $a_2\geq 2$, $(a_3,0,3)$ for $a_3\geq 3$.  
\end{itemize}
\item The horizontal boundary contains:
\begin{itemize}
\item The nodes $(1,2,1)$, $(1,3,1)$, $(3,1,1)$, $(1,1,2)$, $(1,2,3)$, $(2,1,3)$. 
\item The virtual nodes $(0,b_1,1)$ for $b_1\geq 4$, $(0,b_2,2)$ for $b_2\geq 2$, $(0,b_3,3)$ for $b_3\geq 3$.  
\end{itemize}
\end{itemize}

\end{exa}

\begin{abs}\label{notimp}
A node $\gamma=(a,b,c)$ of $\mathcal{Y} (\ulambda)$ is said to be {\it removable} for $\ulambda$  if  
 $\mathcal{Y} (\ulambda)\setminus \{\gamma\}$ is the Young tableau 
  of a $l$-partition $\umu$.  If $\gamma=(a,b,c)\in \mathbb{Z}_{> 0} \times \mathbb{Z}_{> 0}\times \{1,\ldots, l\}$ is such that $\mathcal{Y} (\ulambda)\sqcup \{\gamma\}$ is the Young tableau of 
   a $l$-partition $\umu$ then it is said to be {\it addable} for $\ulambda$.  
  
Note that the intersection between the vertical and the horizontal boundary is given by the removable nodes.

It is also important to  remark that, given a content and a component of a multipartition $\ulambda$, there always exist one unique extended node in this component with the given content which is either addable, either in the boundary of $\ulambda$ . We will denote by $\mathcal{E}_{\mathfrak{j}} (\ulambda)$ the set consisting of :
\begin{itemize}
\item addable $\mathfrak{j}$-nodes of $\ulambda$,
\item  extended $\mathfrak{j}$-nodes of the boundary of  $\ulambda$. 
\end{itemize}
\end{abs}
\begin{exa}
We keep the previous example and take $e=3$. Then the set $\mathcal{E}_{0} (\ulambda)$ consists in extended  nodes of the three components of $\ulambda$. For example, the extended nodes in  $\mathcal{E}_{0} (\ulambda)$  in the component $\lambda^1$ are:
\begin{itemize}
\item  the extended nodes of the horizontal boundary $(0,3k,1)$ for $k\geq 2$ of content $3k$,
\item the addable node $(1,4,1)$ of content $3$, the addable node $(2,2,1)$ of content $0$ 
 and the addable node $(4,1,1)$ of content $-3$, 
\item the extended noded of the vertical boundary $(3k,0,1)$ with $k\geq 2$ of content $-3k$. 
\end{itemize}
One can see that for each $j\in \mathbb{Z}$ in the class of $0$ modulo $3$, there exists a unique extended node of $\lambda^1$   in $\mathcal{E}_{0} (\ulambda)$  with content $j$. 
\end{exa}

\begin{abs}\label{refor} 
We will now define a total order on the elements of $\mathcal{E}_{\mathfrak{j}} (\ulambda)$   with the same residue.  This order will be crucial for the definition of staggered multipartitions in the next section. It will be the key to understand the link of our approach to 
 the old one.
 Let $\gamma_1=(a_1,b_1,c_1) \in \mathcal{E}_{\mathfrak{j}} (\ulambda)$ and 
  $\gamma_2=(a_2,b_2,c_2) \in \mathcal{E}_{\mathfrak{j}} (\ulambda)$. Then we write $\gamma_1<\gamma_2$ if 
  \begin{itemize}
  \item $\operatorname{cont}(\gamma_1)<\operatorname{cont}(\gamma_2)$ or,
\item $\operatorname{cont}(\gamma_1)=\operatorname{cont}(\gamma_2)$ and $c_1>c_2$.

  \end{itemize}
Note that this order strongly depends on the choice of ${\bf s}$. 
We see that we have  $\gamma_1<\gamma_2$ and that these two extended nodes are consecutive if we are in one of the following two cases:
\begin{itemize}
\item $\operatorname{cont}(\gamma_1)=\operatorname{cont}(\gamma_2)$ and $c_1=c_2+1$,
\item $\operatorname{cont}(\gamma_1)+e=\operatorname{cont}(\gamma_2)$,  $c_1=1$, $c_2=l$. 
\end{itemize}
We denote in this case $\gamma_1<_{co} \gamma_2$.
\end{abs}

\section{Staggered multipartitions}

In this section, using the order that we have defined above, we give the definition of a certain  class of multipartitions: the staggered multipartitions. This definition depends on the choice of  ${\bf s}\in \mathbb{Z}^l$. We then simplfiy this definition in the case where the $l$-tuple ${\bf s}$ satisfy a certain condition called ``asymptotic''. 

\subsection{Definition of staggered multipartitions}

\begin{Def} Let $\ulambda$ be an $l$-partition. 
Let $\mathfrak{j}\in \mathbb{Z}/e\mathbb{Z}$. 
A sequence $(\gamma_i)_{i\in \mathbb{Z}_{>0}}$ of extended $\mathfrak{j}$-nodes of $\ulambda$  is said to be a {\it staggered sequence} if this is a sequence of extended $\mathfrak{j}$-nodes of the boundary of $\ulambda$ such that:
\begin{itemize}
\item We have for all $i\in \mathbb{Z}_{>0}$, $\gamma_{i} <_{co} \gamma_{i+1}$. 
\item There exists $s\in \mathbb{Z}_{>0}$ such that $\gamma_s$ is a removable node.
\item  All the $\mathfrak{j}$-nodes $\gamma$ in the boundary such that 
 $\gamma< \gamma_1$  are virtual nodes of the vertical boundary  or  there is an addable $\mathfrak{j}$-node $\gamma$ such that $\gamma<_{co} \gamma_1$. 
 \end{itemize}
\end{Def}
Note that, keeping the above notations, a staggered sequence of (extended) $\mathfrak{j}$-nodes is thus an infinite  sequence of extended nodes in $\mathcal{E}_{\mathfrak{j}} (\ulambda)$. 

\begin{Def}
An $l$-partition $\ulambda$ is said to be a {\it staggered $l$-partition} if $\ulambda=\uemptyset$ or if 
\begin{itemize}
\item There exists $\mathfrak{j}\in \mathbb{Z}/e\mathbb{Z}$ such that 
 $\ulambda$ admits a staggered  sequence $(\gamma_i)_{i\in \mathbb{Z}_{>0}}$ of extended $\mathfrak{j}$-nodes.
 \item If we delete all the removable $\mathfrak{j}$-nodes of this sequence, the resulting $l$-partition is  a staggered $l$-partition. 

\end{itemize}
\end{Def}
\begin{Def}
Let $\ulambda$  be a staggered $l$-partition with rank $n\in \mathbb{Z}_{>0}$. Then $\ulambda$ admits a 
  staggered sequence of nodes. Let  
 $(\gamma_1,\ldots,\gamma_r)$ be the removable nodes of this sequence. We denote by $\mathfrak{j}\in \mathbb{Z}/e\mathbb{Z}$ the common residue of these nodes. Let $\ulambda'$ be the $l$-partition of $n-r$ obtained by removing the nodes 
 $(\gamma_1,\ldots,\gamma_r)$ from $\ulambda$. Then a {\it staggered sequence of residues} of $\ulambda$ is the concatenation of the sequence $(\underbrace{\mathfrak{j},\ldots,\mathfrak{j}}_{r})$ with a staggered sequence of residues of
$\ulambda'$. 
\end{Def}

\begin{exa}
Let $l=2$, $e=3$, ${\bf s}=(0,4)$ and $\ulambda=(4.1,3.2.1.1)$
\vspace{1cm}  
  
  \centerline{\small{
 \Bigg(\  \ \ \ \ytableausetup
{mathmode}\begin{ytableau}
\none  &  \none  & \none  & \none  &  \none& {5} & *(lightgray){6} & 7& \none[\dots] \\
\none  &  { 0} & { 1} & { 2} & *(lightgray)3 \\
\none & \overline{ 1} \\
\overline{3}  \\  
\overline{4}  \\
\overline{5}  \\
\overline{6}  \\
\none[\vdots] \\
 \none[\vdots] \\
\end{ytableau},
\begin{ytableau}
\none  &  \none  & \none  & \none  &  {8} & *(lightgray){9} & 10& \none[\dots] \\
\none  &  { 4} & { 5} & *(lightgray){ 6} \\
\none & 3 & 4 \\
\none & 2   \\  
\none & 1  \\
\overline{1}  \\
\overline{2}  \\
\none[\vdots] \\
 \none[\vdots] \\
\end{ytableau}
 \Bigg)}}
One can start with the staggered sequence coloured in gray in the above diagram (where each extended node in the diagram comes with its content.) Removing the removable $3$-nodes  of this sequence, we obtain the bipartition $(3.1,2.2.1.1)$. 

\vspace{1cm}

  \centerline{\small{
 \Bigg(\  \ \ \ \ytableausetup
{mathmode}\begin{ytableau}
\none  &  \none  & \none  & \none   & {4} & *(lightgray){5} &  6& \none[\dots] \\
\none  &  { 0} & { 1} & *(lightgray){ 2}  \\
\none & *(lightgray)\overline{ 1} \\
\overline{3}  \\  
\overline{4}  \\
\overline{5}  \\
\overline{6}  \\
\none[\vdots] \\
 \none[\vdots] \\
\end{ytableau},
\begin{ytableau}
\none  &  \none  & \none  & 7  &  *(lightgray){8} & {9} & 10& \none[\dots] \\
\none  &  { 4} &*(lightgray) { 5}   \\
\none & 3 & 4 \\
\none & *(lightgray)2   \\  
\none & 1  \\
*(lightgray)\overline{1}  \\
\overline{2}  \\
\none[\vdots] \\
 \none[\vdots] \\
\end{ytableau}
 \Bigg)}}
We obtain the bipartition $(2,2.2.1.1)$ and then:
\vspace{1cm}

  \centerline{\small{
 \Bigg(\  \ \ \ \ytableausetup
{mathmode}\begin{ytableau}
\none  &  \none  & \none  & 3   & *(lightgray){4} & {5} &  6& \none[\dots] \\
\none  &  { 0} & *(lightgray){ 1}    \\
*(lightgray)\overline{2}  \\
\overline{3}  \\  
\overline{4}  \\
\overline{5}  \\
\overline{6}  \\
\none[\vdots] \\
 \none[\vdots] \\
\end{ytableau},
\begin{ytableau}
\none  &  \none  & \none  & *(lightgray)7  &  {8} & {9} & *(lightgray){10}& \none[\dots] \\
\none  &  { 4} & { 5}   \\
\none & 3 &  *(lightgray) 4 \\
\none & 2   \\  
\none & *(lightgray) 1  \\
\overline{1}  \\
\overline{2}  \\
\none[\vdots] \\
 \none[\vdots] \\
\end{ytableau}
 \Bigg)}}

We obtain the bipartition $(1,2.1.1)$. Continuing in this way, we get $(1,1.1)$, then $(\emptyset,1)$ and then $(\emptyset,\emptyset)$ which shows that we have a staggered bipartition. The associated staggered sequence of residues is:
$$(3,3,2,2,1,1,1,2,2,0,0,1).$$
\end{exa}
\begin{Rem}
Assume that $l=1$, $e\in \mathbb{Z}_{>1}$. One can easily show that the set of  staggered partitions correspond the set of $e$-regular partitions, that is the set of partitions $\lambda$ where no non zero parts are repeated $e$ or more times. This will be in fact a consequence of our main theorem. 

\end{Rem}

\subsection{Staggered multipartitions in the asymptotic case}

\begin{abs}\label{kl}
 Fix $n\in \mathbb{Z}_{>0}$. We will assume that the following hypothesis is satisfied for ${\bf s}=(s_1,\ldots,s_l)\in \mathbb{Z}^l$ (known as the asymptotic case):
\begin{equation*}\label{hs} 
 {\bf (H)}\qquad \forall i\in \{2,\ldots,l\},\ s_{i}-s_{i-1}\geq n-1+e.
 \end{equation*} 
 Let $\ulambda\in \Pi^l$. 
 By \cite[Ex. 6.2.16]{GJ}, the order $<$ on $\mathcal{E}_{\mathfrak{j}} (\ulambda)$ (for $\mathfrak{j}\in \mathbb{Z}/e\mathbb{Z}$)  that we have 
  defined above has the following alternative  description $\gamma=(a,b,c)$ and $\gamma'=(a',b',c')$ in $\mathcal{E}_{\mathfrak{j}} (\ulambda)$ satisfy 
   $\gamma<\gamma'$ if and only if $c'<c$ or if $c=c'$ and $a'<a$. This is the order used in particular in \cite{DJMa}.

Using this remark, one can simplify the definition of staggered $l$-partitions in this case. Indeed, if $\gamma$ is a removable node of the Young diagram of $\ulambda$ and if $\gamma>_{co} \gamma'$, then $\gamma'$ must be a virtual node of the horizontal boundary, and thus not removable. 
 Let $\mathcal{K}_\mathfrak{j}(\ulambda)$ denote the set of the nodes of the boundary of $\ulambda$ and the addable nodes of $\ulambda$ of the same residue $\mathfrak{j}$. 
 Define  a binary relation on $\mathcal{K}_\mathfrak{j}(\ulambda)$  as follows (this thus concerns only the nodes of the Young diagram, not the virtual ones). For $\gamma=(a,b,c)$ and $\gamma'=(a',b',c')$ 
 in $\mathcal{K}_\mathfrak{j} (\ulambda)$, we denote $\gamma=(a,b,c)<_\mathcal{K} \gamma'=(a',b',c')$ if
\begin{itemize}
\item $c=c'$ and $\textrm{cont} (\gamma)=\textrm{cont}(\gamma')-e$ or
\item $c'=c+1$ and $\gamma'$ (resp. $\gamma$) has minimal (resp. maximal) content among  the nodes of 
 $\mathcal{K}_{\mathfrak{j}} (\ulambda)$ in the component $\lambda^{c'}$ (resp. $\lambda^{c}$).
\end{itemize}
It follows from the definition that for two $\mathfrak{j}$-nodes $\gamma$ and $\eta$ in  $\mathcal{K}_\mathfrak{j}(\ulambda)$,  if we have $\gamma<_{\mathcal{K}} \eta$ then there exists a sequence $(\eta_i)_{i=1,\ldots,r}$ of 
 virtual nodes of the boundary of $\ulambda$ such that   
$\gamma<_{co}\eta_1<_{co}\ldots <_{co}\eta_r<_{co} \eta$. As virtual nodes are not removable, we can thus give another definition of staggered $l$-partitions in this case, which does not use the notion of extended nodes.

A sequence $(\gamma_i)_{i=1,\ldots,r}$ of $\mathfrak{j}$-nodes of $\mathcal{K}_{\mathfrak{j}}(\ulambda)$ is said to be a $\mathcal{K}$-staggered sequence if:
\begin{itemize}
\item This is a sequence of nodes of the boundary of $\ulambda$ containing at least one removable node.
\item We have for all $i\in \{1,\ldots,r-1\}$, $\gamma_{i} <_\mathcal{K} \gamma_{i+1}$, 
\item There are no elements $\eta$ in $\mathcal{K}_{\mathfrak{j}}(\ulambda)$  such that $\eta>_\mathcal{K} \gamma_r$.
\item If there exists $\eta$ in $\mathcal{K}_{\mathfrak{j}}(\ulambda)$  such that $\eta<_\mathcal{K} \gamma_1$ 
 then $\eta$ is addable.
\end{itemize}
Under the hypothesis ${\bf (H)}$, we obtain that a $l$-partition $\ulambda$ is a {\it staggered $l$-partition} (with respect to $e$ and ${\bf s}$ as above)  if $\ulambda=\uemptyset$ or if 
\begin{itemize}
\item There exists $\mathfrak{j}\in \mathbb{Z}/e\mathbb{Z}$ such that 
 $\ulambda$ admits a  $\mathcal{K}$-staggered  sequence  of $\mathfrak{j}$-nodes.
 \item If we delete all the removable $\mathfrak{j}$-nodes of this sequence, the resulting $l$-partition is staggered. 
\end{itemize}
In addition, assume that $\ulambda$ is a staggered $l$-partition. Then it is clear that, taking successively the residues of the removable nodes of the $\mathcal{K}$-staggered 
 sequences of the staggered $l$-partitions we get in this recursive process, we obtain the staggered sequence of residues of $\ulambda$. 
\end{abs}
\begin{exa}
Assume that $l=2$, $(s_1,s_2)=(14,0)$, $e=3$. For $n\leq 9$, 
 we satisfy the above hypotheses. Let us consider the bipartition $(2.2,3.1.1)$. 
 One can check that this is a staggered bipartition. Below is the sequence of the Young diagram 
  of the associated staggered bipartition in the recursive process. We color in grey the boxes of the nodes of the associated $\mathcal{K}$-staggered sequences.  
\vspace{1cm}

  \centerline{\small{
 \Bigg(\  \ \ytableausetup
{mathmode}
\begin{ytableau}
   {14} & {15}  \\
  { 13} & *(lightgray){ 14}    \\
\end{ytableau},
\begin{ytableau}
  {0} & {1}  & *(lightgray){2}\\
    *(lightgray){\overline{1}}  \\
\overline{2}  \\
\end{ytableau}
 \Bigg)} 
 $\to$
 \small{
 \Bigg(\  \ \ytableausetup
{mathmode}
\begin{ytableau}
   {14} & *(lightgray){15}  \\
  { 13}     \\
\end{ytableau},
\begin{ytableau}
  {0} & {1}  \\
    {\overline{1}}  \\
\overline{2}  \\
\end{ytableau}
 \Bigg)} 
  $\to$
  \small{
 \Bigg(\  \ \ytableausetup
{mathmode}
\begin{ytableau}
   {14}   \\
  *(lightgray){ 13}      \\
\end{ytableau},
\begin{ytableau}
  {0} & *(lightgray){1}  \\
   {\overline{1}}  \\
*(lightgray){\overline{2}}  \\
\end{ytableau}
 \Bigg)} 
  $\to$
  \small{
 \Bigg(\  \ \ytableausetup
{mathmode}
\begin{ytableau}
   *(lightgray){14}   
    \\
\end{ytableau},
\begin{ytableau}
  {0} \\
    *(lightgray){\overline{1}}  \\
\end{ytableau}
 \Bigg)}
   $\to$
  \small{
 \Bigg(\  \ytableausetup
{mathmode}
$\emptyset$\ ,
\begin{ytableau}
  *(lightgray){0} 
    \\
\end{ytableau}
 \Bigg)}  
  }
 
\vspace{0,3cm} 
 
\noindent  The staggered sequence of residues is $2,2,0,1,1,1,2,2,0$.

\end{exa}

\begin{exa}
Assume that $l=2$, $(s_1,s_2)=(12,0)$, $e=2$. For $n\leq 9$, 
 we satisfy the above hypotheses. Let us consider the bipartition $(2.2.1.1,\emptyset)$. 
 One can check that this is a staggered bipartition. Below is the sequence of the Young diagram 
  of the associated staggered bipartition in the recursive process. We color in grey the boxes of the nodes of the associated $\mathcal{K}$-staggered sequences.  
\vspace{1cm}

  \centerline{\small{
 \Bigg(\  \ \ytableausetup
{mathmode}
\begin{ytableau}
   {12} & {13}  \\
  { 11} & *(lightgray){ 12}    \\
  10 \\
  *(lightgray){ 9}
\end{ytableau},
$\emptyset $
 \Bigg)} 
 $\to$
 \small{
 \Bigg(\  \ \ytableausetup
{mathmode}
\begin{ytableau}
   {12} & *(lightgray){13}  \\
  { 11}     \\
   *(lightgray){10}
\end{ytableau},
$\emptyset$
 \Bigg)} 
  $\to$
 \small{
 \Bigg(\  \ \ytableausetup
{mathmode}
\begin{ytableau}
   {12} \\
  *(lightgray){11}   
\end{ytableau}\ ,
$\emptyset$
 \Bigg)}  
  $\to$
 \small{
 \Bigg(\  \ \ytableausetup
{mathmode}
\begin{ytableau}
   *(lightgray){12}   
\end{ytableau}\ ,
$\emptyset$
 \Bigg)} 
  }
 
\vspace{1cm} 
 
\noindent Take now ${\bf s}=(0,0)$ and the same bipartition, we search for successive staggered sequences :

\vspace{1cm}

   \centerline{\small{
 \Bigg(\  \ \ytableausetup
{mathmode}
\begin{ytableau}
   {0} & {1}  \\
  { \overline{1}} & *(lightgray){0}    \\
  \overline{2}\\
    \overline{3}
\end{ytableau},
$\emptyset $
 \Bigg)} 
 $\to$
 \small{
 \Bigg(\  \ \ytableausetup
{mathmode}
\begin{ytableau}
   {0} & *(lightgray){1}  \\
  \overline{ 1}     \\
   \overline{2}\\
    \overline{3}
\end{ytableau},
$\emptyset$
 \Bigg)} 
  $\to$
 \small{
  \Bigg(\  \ \ytableausetup
{mathmode}
\begin{ytableau}
   {0}    \\
  \overline{ 1}     \\
   \overline{2}\\
    \overline{3}
\end{ytableau},
$\emptyset$
 \Bigg)}
}

\vspace{1cm}

This last $2$-partition does not have any staggered sequences. Thus  $(2.2.1,\emptyset)$ is not a staggered $2$-partition for ${\bf s}=(0,0)$.

\end{exa}

\section{ Uglov multipartitions and crystal approach}
The set of $e$-regular partitions that we have seen above is in fact a particular case of the set of Uglov $l$-partitions. 
These objects naturally appear in the context of the crystal graph theory, canonical bases theory and the representation theory of Ariki-Koike algebras. We here begin this section  with their definitions.

\subsection{Uglov multipartitions}\label{ugsec}

\begin{abs}\label{ra}
Le ${\bf s}=(s_1,\ldots,s_l)\in \mathbb{Z}^l$, $e\in \mathbb{Z}_{>1}$ and 
 $\mathfrak{j}\in \mathbb{Z}/e\mathbb{Z}$. 
Let  ${\boldsymbol{\lambda}}$ be an  $l$-partition. We can consider its set of addable and
removable $\mathfrak{j}$-nodes. Let $w_{\mathfrak{j}}(\ulambda)$ be the word obtained first by writing the
addable and removable $\mathfrak{j}$-nodes of ${\boldsymbol{\lambda}}$ in {increasing}
order with respect to $<$ (see \S \ref{refor})
and then by encoding each addable $\mathfrak{j}$-node by the letter $A$ and each removable $\mathfrak{j}$-node by the letter $R$. If we delete all the subword of the form $RA$ in this word,
 we obtain again a word in $R$ and $A$ which we write by 
$\overline{w}_{\mathfrak{j}}(\ulambda)$ (this word will have a role to play in the sequel). 
Now delete in this word again all the factors $RA$ and continue until you reach a word
 with no such factors.
Write $\widetilde{w}_{\mathfrak{j}}(\ulambda)=A^{p}R^q$ for the
word derived from $w_{\mathfrak{j}} (\ulambda)$. Thus 
$\widetilde{w}_{\mathfrak{j}}(\ulambda)$ is a subword of 
$\overline{w}_{\mathfrak{j}}(\ulambda)$ which is itself a subword of 
$w_{\mathfrak{j}} (\ulambda)$.

The removable $\mathfrak{j}$-nodes in  $\widetilde{w}_{\mathfrak{j}}(\ulambda)$ are called the 
 {\it normal removable} $\mathfrak{j}$-nodes. If $r>0$, the leftmost removable $\mathfrak{j}$-node in 
 $\widetilde{w}_{\mathfrak{j}}$ is called the {\it good removable $\mathfrak{j}$-node}.
 
The addable $\mathfrak{j}$-nodes in  $\widetilde{w}_{\mathfrak{j}}(\ulambda)$ are called the 
 {\it normal addable} $\mathfrak{j}$-nodes. If $r>0$, the rightmost addable $\mathfrak{j}$-node in 
 $\widetilde{w}_{\mathfrak{j}}$ is called the {\it good addable $\mathfrak{j}$-node}. 
 
\end{abs}

\begin{Def}\label{uglov}
  The set of {\it Uglov $l$-partitions} $\Uglov{e}{\bf s}$ is defined   recursively  as follows.
\begin{itemize}
   \item We have $\uemptyset:=(\emptyset,\emptyset,...,\emptyset)\in{ \Uglov{e}{\bf s}}$.
    \item If $\ulambda\in\Uglov{e}{\bf s}$, there exist $\mathfrak{j}\in \mathbb{Z}/e\mathbb{Z}$ and a good removable $\mathfrak{j}$-node $\gamma$ such that if we remove $\gamma$ from  $\ulambda$, the resulting  $l$-partition is in $\Uglov{e}{\bf s}$.
\end{itemize}
We also set $\Uglov{e}{\bf s}(n)=\Uglov{e}{\bf s}\cap \Pi^{l}(n)$.

\end{Def}

Of course, if $\ulambda\in\Uglov{e}{\bf s}$ and  there exist $\mathfrak{j}\in \mathbb{Z}/e\mathbb{Z}$ and a good addable  $\mathfrak{j}$-node $\gamma$ then the $l$-partition $\umu$ such that $\mathcal{Y} (\umu)=\mathcal{Y} (\ulambda)\cup\{\gamma\}$ is in $\Uglov{e}{\bf s}$.

\begin{Rem}\label{ere}
 It is easy to see that for $l=1$, the set $\Uglov{e}{\bf s}$ always correspond to the set of $e$-regular partitions. We will see in the next ssubsection anoother particular cases of Uglov $l$-partitions. 
\end{Rem}

\begin{Rem}\label{translation}
Let ${\bf s}=(s_1,\ldots,s_l)\in \mathbb{Z}^l$. From the definition, it is easy to see  that for all 
 $k\in \mathbb{Z}$, if we denote ${\bf s}+k=(s_1+k,\ldots,s_l+k)$,  we have $\Uglov{e}{\bf s}=\Uglov{e}{{\bf s}+k}$.

\end{Rem}
\begin{Rem}\label{infty}
In the following, we only consider the case where $e\in \mathbb{Z}_{>1}$. However, the case $e=\infty$ follows by taking $e>>0$. 

\end{Rem}

Fix $n\in \mathbb{Z}_{>0}$ and  assume, as in \ref{kl},  that the multicharge ${\bf s}=(s_1,\ldots,s_l)\in \mathbb{Z}^l$ is asymptotic, that is;
\begin{equation*}\label{hs} 
 {\bf (H)}\qquad \forall i\in \{2,\ldots,l\},\ s_{i}-s_{i-1}\geq n-1+e.
 \end{equation*}
 The associated Uglov $l$-partitions are known as {\it Kleshchev multipartitions}.

\subsection{FLOTW multipartitions}
Assume that $e\in \mathbb{Z}_{>0}$ and $l\in \mathbb{Z}_{>0}$ and define:
 $$\mathcal{S}_e^l:=\{(s_1,\ldots,s_l)\in \mathbb{Z}^l\ |\ \forall 1\leq i<j\leq l,\ 0\leq s_{i}-s_j\leq  e\},$$ 
In the case where ${\bf s}=(s_1,\ldots,s_l)\in \mathcal{S}_e^l$,  the set of Uglov multipartitions have a nice alternative description (see for example \cite[Th. 6.3.2]{GJ} for its proof.) 
 
\begin{Prop}[Foda-Leclerc-Okado-Thibon-Welsh]\label{flotwdef}
Assume that ${\bf s}=(s_1,\ldots,s_l)\in \mathcal{S}_e^l$.  The set $\Uglov{e}{\bf s}$ of Uglov $l$-partitions is the  set of $l$-partitions  $\ulambda={(\lambda^{1} ,...,\lambda^{l})}$ such that:
\begin{enumerate}
\item for all $1\leq{j}\leq{l-1}$ and $i\in \mathbb{Z}_{>0}$, we have:
\begin{align*}
&\lambda_i^{j}\geq{\lambda^{j+1}_{i+s_{j+1}-s_{j}}},\\
&\lambda^{l}_i\geq{\lambda^{1}_{i+e+s_1-s_{l}}};
\end{align*}
\item  for all  $k>0$, among the residues of the nodes of the vertical boundary of the form $(a,\lambda^c_a,c)$
 with $a\in \mathbb{Z}_{>0}$, $c\in \{1,\ldots,l\}$ and $\lambda^c_a=k$, at least one element of  $\{0,1,...,e-1\}$ does not occur.
\end{enumerate}
 Such $l$-partitions are called FLOTW $l$-partitions.
\end{Prop}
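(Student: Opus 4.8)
The plan is to identify $\Uglov{e}{\bf s}$ with the connected component of $\uemptyset$ in the crystal graph, and to show that this component coincides with the FLOTW set by a stability argument. Unwinding Definition~\ref{uglov}, $\Uglov{e}{\bf s}$ is exactly the set of $l$-partitions obtained from $\uemptyset$ by successively adding good addable $\mathfrak{j}$-nodes (equivalently, it is the connected component of $\uemptyset$ in the graph on $\Pi^l$ given by the good-node operators, i.e. the crystal of the irreducible highest weight $\mathcal{U}(\widehat{\mathfrak{sl}}_e)$-module inside the Fock space of multicharge ${\bf s}$). Let $\Phi$ denote the set of FLOTW $l$-partitions attached to $e$ and ${\bf s}\in\mathcal{S}_e^l$. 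I would prove $\Phi=\Uglov{e}{\bf s}$ by establishing two stability properties and inducting on the rank: (B) every $\ulambda\in\Phi$ with $\ulambda\neq\uemptyset$ has, for some $\mathfrak{j}$, a good removable $\mathfrak{j}$-node $\gamma$ whose removal keeps the $l$-partition in $\Phi$; and (C) if $\umu\in\Phi$ and $\gamma$ is a good addable $\mathfrak{j}$-node of $\umu$, then adding $\gamma$ keeps the $l$-partition in $\Phi$. Since $\uemptyset\in\Phi$ is clear, (B) and induction on the rank give $\Phi\subseteq\Uglov{e}{\bf s}$ (remove a good removable node, apply the inductive hypothesis, then invoke the second bullet of Definition~\ref{uglov}), while (C) and induction give $\Uglov{e}{\bf s}\subseteq\Phi$ (every Uglov $l$-partition arises from $\uemptyset$ by good additions, using the standard fact that if $\gamma$ is a good removable $\mathfrak{j}$-node of $\ulambda$ then it is a good addable $\mathfrak{j}$-node of the $l$-partition obtained by removing it).

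For (B) there are two points. First one must exhibit a residue $\mathfrak{j}$ for which the word $\widetilde{w}_{\mathfrak{j}}(\ulambda)$ contains a letter $R$, so that a good removable $\mathfrak{j}$-node exists at all; this is where condition~(1) — the cylindric nesting of the components forced by ${\bf s}\in\mathcal{S}_e^l$ — enters, by selecting an extremal removable node (for instance one realising the largest part, ties broken by the component order) and checking that its $R$ survives the successive deletion of factors $RA$ in $w_{\mathfrak{j}}(\ulambda)$. Second, one verifies that removing this good removable $\mathfrak{j}$-node preserves both FLOTW conditions: condition~(1) because only one part decreases by $1$ and the periodic inequalities, with the aligned multicharge, are not broken; and condition~(2) by a local analysis of the two part-values $k$ and $k-1$ that are affected. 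Step~(C) is the mirror image: the same local analysis is run in reverse on the good addable $\mathfrak{j}$-node of $\umu$.

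The main obstacle, in both (B) and (C), is the invariance of condition~(2): for every value $k>0$ occurring as a part, the residues of the vertical-boundary nodes $(a,\lambda^c_a,c)$ with $\lambda^c_a=k$ must omit at least one class modulo~$e$. Adding or removing a node at a corner moves exactly one such boundary node from value $k$ to value $k\mp 1$, so one has to show that a missing residue can be maintained at both values at once; this uses ${\bf s}\in\mathcal{S}_e^l$ essentially, since the bounds $0\le s_i-s_j\le e$ together with condition~(1) pin down precisely which residues occur at a given value. An alternative route, and the one taken in \cite[Theorem 6.3.2]{GJ}, avoids this bookkeeping: realise the Fock-space crystal of multicharge ${\bf s}$ as a tensor product of $l$ level-one highest-weight crystals determined by the $s_c$ modulo~$e$, apply Kashiwara's tensor product rule to describe the connected component of $\uemptyset$, and then translate the resulting combinatorial description back into conditions~(1) and~(2) — with condition~(1) encoding the alignment of the tensor factors imposed by ${\bf s}\in\mathcal{S}_e^l$ and condition~(2) encoding $e$-regularity in each level-one factor (Remark~\ref{ere}). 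In either approach the proposition follows as soon as (B) and (C) are in place.
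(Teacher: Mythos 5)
The paper does not itself prove this proposition; it is stated with a citation to \cite[Th.~6.3.2]{GJ}, so there is no internal proof to compare against. Your framework --- identifying $\Uglov{e}{\bf s}$ with the connected component of $\uemptyset$ in the crystal, and showing $\Phi=\Uglov{e}{\bf s}$ via the two stability properties (B), (C) together with induction on the rank --- is the correct skeleton, and it does reflect how the FLOTW characterisation is actually established. But as written the proposal does not carry out either (B) or (C): both reduce to showing that condition~(2) is preserved under good additions/removals, and (B) additionally requires producing a residue $\mathfrak{j}$ whose word $\widetilde{w}_{\mathfrak{j}}(\ulambda)$ retains a letter $R$. You explicitly flag these as ``the main obstacle'' and then set them aside. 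Naming them is not proving them; the non-cancellation step is itself a lemma of genuine substance (compare Proposition~\ref{exif} in the present paper, whose proof leans on \cite[Lemma~5.7.13]{GJ}), and preservation of condition~(2) requires a rather delicate case analysis exploiting both the cylindric inequalities of condition~(1) and the constraint $0\le s_i-s_j\le e$. As it stands the argument is a plausible plan, not a proof.

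A secondary point concerns your account of the alternative route. The tensor factors in play are level-one \emph{Fock-space} crystals, which are not highest-weight crystals; the connected component of $\uemptyset$ in their tensor product is not the product of the $l$ level-one connected components. Correspondingly, condition~(2) is a global, cross-component constraint (for each part length $k$ the residues on the vertical boundary across \emph{all} components must omit a class mod $e$), which is strictly stronger than requiring each $\lambda^c$ to be $e$-regular. So ``translating the tensor-product rule into conditions~(1) and~(2)'' is not the clean shortcut your last paragraph suggests; the translation is itself much of the work, and attributing exactly that route to \cite[Th.~6.3.2]{GJ} overstates the shortcut.
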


\begin{Rem}
For $l=1$, the $1$-partitions may be naturally identified with the partitions. The set of FLOTW $1$-partitions are then   identified with the set of $e$-regular partitions.  This is consistent with Remark \ref{ere}. 
\end{Rem}
\begin{Rem}\label{fla}
Note that for ${\bf s}=(s_1,\ldots,s_l)\in \mathcal{S}_e^l$ and for all $c\in\{1,\ldots,l\}$, we have
  $\ulambda={(\lambda^{1} ,...,\lambda^{l})}\in    \Uglov{e}{\bf s}$ if and only if
   $\ulambda={(\lambda^{c+1} ,...,\lambda^{l},\lambda^1,\ldots,\lambda^c)}\in    \Uglov{e}{{\bf s}'}$
    for ${\bf s}'=(s_{c+1},\ldots,s_l,s_1+e,\ldots,s_c+e)$  which is also in  
$\mathcal{S}_e^l$ 
\end{Rem}

\section{The main result}

We here state our main result and expose the strategy to prove it.

\subsection{The main theorem}

One of the main result of this paper is the following. 


\begin{Th}\label{main} Assume that $e\in \mathbb{Z}_{>1}$ and ${\bf s}=(s_1,\ldots,s_l)\in \mathbb{Z}^l$. 
An $l$-partition is an Uglov  $l$-partition with respect to $(e,{\bf s})$ if and only if it is a staggered $l$-partition with respect to $(e,{\bf s})$. 
\end{Th}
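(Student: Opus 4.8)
The strategy is to reduce the general statement to the FLOTW case via crystal isomorphisms, and then to verify directly that the combinatorial conditions defining FLOTW $l$-partitions match the staggered condition. More precisely, I would proceed in three stages. First, establish that the staggered property, like the Uglov property, is stable under the crystal isomorphisms introduced in \cite{JL}: these isomorphisms realize the identifications $\Uglov{e}{\bf s} \simeq \Uglov{e}{{\bf s}'}$ when ${\bf s}$ and ${\bf s}'$ lie in the same ``alcove orbit,'' and one wants the corresponding bijections on multipartitions to carry staggered sequences to staggered sequences. Since both sides are defined recursively by removing residue blocks, the heart of this step is to check that the crystal isomorphism $\psi$ sends the good-removable-node chains of an Uglov multipartition (equivalently, the $\mathcal{K}$-staggered or staggered sequences) to those of its image, which should follow from the explicit description of $\psi$ on the level of abaci / $\beta$-numbers. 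In particular one uses that any ${\bf s}\in\mathbb{Z}^l$ can be moved into $\mathcal{S}_e^l$ (the FLOTW chamber) by a composition of such isomorphisms together with the cyclic shift of Remark \ref{fla} and the translation invariance of Remark \ref{translation}.

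Second, treat the FLOTW case ${\bf s}\in\mathcal{S}_e^l$ as the base case: here one must show that an $l$-partition is FLOTW (in the sense of Proposition \ref{flotwdef}) if and only if it is staggered. For this I would unwind the definitions simultaneously. Given a FLOTW $\ulambda$, one exhibits an explicit staggered sequence by choosing a residue $\mathfrak{j}$ and taking the corresponding run of extended $\mathfrak{j}$-nodes of the boundary: condition (2) of Proposition \ref{flotwdef} — that some residue fails to occur among nodes of a fixed part-size on the vertical boundary — is exactly what guarantees the existence of a residue $\mathfrak{j}$ for which the $<_{co}$-chain of boundary $\mathfrak{j}$-nodes contains a removable node and starts appropriately (after a virtual vertical node or an addable node). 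Condition (1), the ``staircase'' inequalities between consecutive components, controls how the chain passes from one component to the next via the $<_{co}$ relation with $c_1 = c_2+1$ or the wrap-around $c_1=1,c_2=l$. One then checks that removing the removable $\mathfrak{j}$-nodes of this sequence again yields a FLOTW $l$-partition (this is essentially the statement that FLOTW multipartitions are closed under removing good nodes, which is part of the crystal structure), so that the recursion terminates at $\uemptyset$; and conversely that a staggered sequence forces conditions (1) and (2).

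Third, assemble the pieces: given arbitrary $(e,{\bf s})$, pick a crystal isomorphism $\Psi$ taking $\Uglov{e}{\bf s}$ to $\Uglov{e}{{\bf s}'}$ with ${\bf s}'$ FLOTW; by stage one $\Psi$ also identifies staggered $l$-partitions for ${\bf s}$ with staggered $l$-partitions for ${\bf s}'$; by stage two the latter coincide with FLOTW $=$ Uglov for ${\bf s}'$; pull back through $\Psi$ to conclude. A separate small point to handle is the passage between the ``extended node / staggered sequence'' formulation and the ``$\mathcal{K}$-staggered'' formulation under hypothesis ${\bf (H)}$ (this is already done in \S\ref{kl}, so in the asymptotic case one may freely use whichever is convenient; for the general ${\bf s}$ one works with the extended-node version throughout).

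\textbf{Main obstacle.} The delicate part is stage one: proving that the crystal isomorphisms of \cite{JL} preserve the staggered property. The Uglov side transforms transparently under these maps because they are \emph{defined} as crystal isomorphisms, but the staggered condition is phrased purely combinatorially in terms of contents, the order $<$ and $<_{co}$, and boundaries of extended Young diagrams, with no a priori reference to the crystal; so one must translate the action of $\psi$ on $\beta$-sets into its effect on extended nodes, boundaries, and residue-chains, and verify compatibility step by step. I expect this to require a careful case analysis of how $\psi$ permutes the roles of components and shifts charges, and it is where the bulk of the technical work — and the reliance on the detailed results of \cite{JL} — will be concentrated.
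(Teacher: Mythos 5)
You have the right skeleton: base case at FLOTW multicharges (${\bf s}\in\mathcal{S}_e^l$), crystal isomorphisms to transport to arbitrary ${\bf s}$. But your assembly in stage three has a circularity gap in the direction \emph{staggered $\Rightarrow$ Uglov}. The crystal isomorphism $\Psi^e_{{\bf s}\to{\bf s}'}$ is only defined as a bijection $\Uglov{e}{\bf s}\to\Uglov{e}{{\bf s}'}$; given an $\ulambda$ that is merely \emph{staggered} for ${\bf s}$, you cannot apply $\Psi$ to it until you already know it is Uglov, which is precisely what you are trying to prove. The sentence ``$\Psi$ also identifies staggered $l$-partitions for ${\bf s}$ with staggered $l$-partitions for ${\bf s}'$'' can only be made sense of on $l$-partitions that are both Uglov and staggered, and restricting to those gives you the containment $\Uglov{e}{\bf s}\subseteq\{\text{staggered}\}$ but not the converse.

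What closes the gap in the paper is an induction on the rank of $\ulambda$, which your proposal never invokes. In Proposition \ref{direct} one removes the removable nodes of a staggered sequence of $\ulambda$ to obtain an $\ulambda'$ of strictly smaller rank; by the induction hypothesis $\ulambda'$ \emph{is} Uglov, so the crystal isomorphism to the FLOTW chamber may legitimately be applied to $\ulambda'$. One then performs the node-addition inside the FLOTW world using the FLOTW-specific Proposition \ref{convA}, and transports the result back generator by generator through $\widehat{\mathfrak{S}}_l$ using the one-step compatibility Lemmas \ref{prec}, \ref{prec2} and \ref{wa}, which track how a single batch of same-residue normal nodes interacts with a single $\Psi^e_{{\bf v}\to\sigma_c.{\bf v}}$ or $\Psi^e_{{\bf v}\to\tau.{\bf v}}$ (the case analysis you anticipate is exactly the table of node natures in \S\ref{sig} together with Lemmas \ref{a2}, \ref{a3}, \ref{a0}). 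This is genuinely weaker, and more tractable, than your stated stage one: the paper never proves a global claim that ``$\Psi$ sends staggered sequences to staggered sequences,'' only that one round of removal/addition commutes with one elementary isomorphism, and the induction does the rest. The reverse implication (Proposition \ref{mainu}) has the same structure, now driven by Lemmas \ref{fi} and \ref{pt}. Adding the induction on rank and replacing ``$\Psi$ preserves staggered'' by these one-step compatibility statements would turn your outline into the paper's proof.
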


\begin{Rem}
Assume that $e\in \mathbb{Z}_{>1}$ and ${\bf s}=(s_1,\ldots,s_l)\in \mathbb{Z}^l$  are  such that 
 for all $i=2,\ldots,l$, we have $s_{i}-s_{i-1}\geq n-1+e$. Then the set of Uglov $l$-partitions correspond 
  to the set of Kleshchev $l$-partitions. We thus obtain a new characterization of this set of $l$-partitions which does not use the notion of crystal graph. 

\end{Rem}

\begin{Rem}\label{conven}
There are different conventions for the use of this notion of Kleshchev $l$-partitions. In this paper, we use the conventions of \cite{GJ} (which come from the works of Uglov \cite{Ug}). The papers \cite{BK,DJM,Hu} use another convention. In particular, the order on nodes used in these papers is the reversed order of ours. As a consequence, the Kleshchev $l$-partitions in our paper  correspond to the conjugate of the Kleshchev $l$-partitions obtained in {\it op. cit.} (the conjugate of an $l$-partition $(\lambda^1,\ldots,\lambda^l)$ is $({\lambda^l} ',\ldots,{\lambda^1} ')$ where $\lambda'$ denotes the usual conjugate, or transpose, of the partition $\lambda$). 
 Of course, it is straightforward to translate our result in terms of the other convention.

\end{Rem}

\subsection{The  strategy of the proof}

Let us end this section by explaining  the strategy to prove the above theorem:
\begin{itemize}
  \item In section \ref{flotw}, we will consider a particular case: the case where ${\bf s}\in \mathcal{S}_e^l$.   We show that, in this case, 
   the Uglov $l$-partitions (also known as FLOTW $l$-partitions) are staggered $l$-partitions (and reciprocally). In fact, we will even show a stronger result that this one.
\item We then recall and develop previous results on crystal isomorphisms in \S \ref{isosec}. We show that all classes of Uglov $l$-partitions are in bijection with the Kleshchev $l$-partitions and the FLOTW $l$-partitions. We moreover describe and study these bijections.
\item We use these bijections and the previous results on FLOTW multipartitions  to deduce our main result in the ninth part.

\end{itemize}

\section{The case of FLOTW multipartitions}\label{flotw}

The aim of this section  is to show that in the case where  ${\bf s}=(s_1,\ldots,s_l)\in \mathcal{S}_e^l$, 
the associated Uglov $l$-partitions are always staggered $l$-partitions. 
It is a remarkable fact that the 
 Uglov $l$-partitions have a simple non recursive definition in this case. 
 
 \subsection{Definition and first properties}

In the following, we will need two important lemmas. The proof of the first one can be found in  \cite[Lemma 5.7.17]{GJ}. In this reference, 
  the result concerns only removable and addable nodes but one can check that if we replace "removable node" by "extended node of the vertical boundary", the proof is still correct. One can also check that only the first axiom $(1)$ in the definition of FLOTW $l$-partition is used in the proof. 

\begin{lemma}\label{L1}
Let  ${\bf s}=(s_1,\ldots,s_l)\in \mathcal{S}_e^l$ and  $\ulambda$ be an $l$-partition satisfying $(1)$ in Proposition \ref{flotwdef}. Let  $\mathfrak{j}\in \mathbb{Z}/e\mathbb{Z}$, let $\gamma_1=(a_1,b_1,c_1)\in \mathcal{E}_{\mathfrak{j}} (\ulambda)$, 
and let $\gamma_2=(a_2,b_2,c_2)\in \mathcal{E}_{\mathfrak{j}} (\ulambda)$.  
 Then if $\lambda_{a_1}^{c_1}<\lambda_{a_2}^{c_2}$, we have
  $\gamma_1<\gamma_2$.
 
\end{lemma}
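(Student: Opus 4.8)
Let me write $\gamma_1 = (a_1, b_1, c_1)$ and $\gamma_2 = (a_2, b_2, c_2)$ for the two extended $\mathfrak{j}$-nodes, and suppose $\lambda_{a_1}^{c_1} < \lambda_{a_2}^{c_2}$. I want to conclude $\gamma_1 < \gamma_2$, i.e. either $\operatorname{cont}(\gamma_1) < \operatorname{cont}(\gamma_2)$, or the contents are equal and $c_1 > c_2$. Since both nodes have the same residue $\mathfrak{j}$, their contents differ by a multiple of $e$; so it suffices to rule out $\operatorname{cont}(\gamma_1) > \operatorname{cont}(\gamma_2)$ and, in the case of equal contents, to rule out $c_1 \le c_2$ (equality $c_1=c_2$ being impossible since then the two nodes would coincide as extended nodes on the vertical boundary of a fixed row-length, forcing $\lambda_{a_1}^{c_1}=\lambda_{a_2}^{c_2}$).

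\textbf{Reduction to contents.} The plan is to argue by contradiction: assume $\operatorname{cont}(\gamma_1) \ge \operatorname{cont}(\gamma_2)$, so $b_1 - a_1 + s_{c_1} \ge b_2 - a_2 + s_{c_2}$. Each $\gamma_i$ lies in $\mathcal{E}_{\mathfrak{j}}(\ulambda)$, so it is either addable or on the boundary; in the situations relevant to Lemma~\ref{L1} (extended nodes of the vertical boundary, or addable nodes) the relevant row index $a_i$ and the row length $\lambda_{a_i}^{c_i}$ are linked: for a node on the vertical boundary of row $a_i$ in component $c_i$ one has $b_i = \lambda_{a_i}^{c_i}$ (for a genuine node) or $b_i = 0$ with $a_i > \ell(\lambda^{c_i})$ (for a virtual node, where $\lambda_{a_i}^{c_i}=0$); for an addable node $b_i = \lambda_{a_i-1}^{c_i}+1$ lies just past the end of the preceding row. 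In every case the content of $\gamma_i$ is controlled, up to a bounded error, by $\lambda_{a_i}^{c_i} - a_i + s_{c_i}$. I would combine this with the FLOTW inequality~(1) from Proposition~\ref{flotwdef}: if $c_1 = c_2$ the partition $\lambda^{c_1}$ is weakly decreasing, so $\lambda_{a_1}^{c_1} < \lambda_{a_2}^{c_2}$ forces $a_1 > a_2$, hence $b_1 - a_1 < b_2 - a_2$ up to the bounded corrections, contradicting $\operatorname{cont}(\gamma_1)\ge \operatorname{cont}(\gamma_2)$ once one checks the correction terms are absorbed; if $c_1 \ne c_2$ one chains the inequalities $\lambda_i^{j} \ge \lambda_{i + s_{j+1}-s_j}^{j+1}$ (and the cyclic one $\lambda_i^l \ge \lambda_{i+e+s_1-s_l}^1$) around from component $c_1$ to component $c_2$ to translate the hypothesis $\lambda_{a_1}^{c_1}<\lambda_{a_2}^{c_2}$ into a statement about the shifted row indices, and then read off the content inequality.

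\textbf{The main case analysis.} Concretely, suppose first $c_1 < c_2$. Walking up the FLOTW chain from $\lambda^{c_1}$ to $\lambda^{c_2}$ gives $\lambda_{a}^{c_1} \ge \lambda_{a + (s_{c_2}-s_{c_1})}^{c_2}$ for all $a$; applying this with $a = a_1$ and using $\lambda_{a_1}^{c_1} < \lambda_{a_2}^{c_2}$ shows $\lambda_{a_1 + s_{c_2}-s_{c_1}}^{c_2} \le \lambda_{a_1}^{c_1} < \lambda_{a_2}^{c_2}$, so since $\lambda^{c_2}$ is weakly decreasing, $a_2 < a_1 + s_{c_2}-s_{c_1}$, i.e. $a_1 - a_2 > s_{c_1}-s_{c_2}$, equivalently $-a_1 + s_{c_1} < -a_2 + s_{c_2}$. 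Since also $b_1 \le \lambda_{a_1}^{c_1} + O(1) < \lambda_{a_2}^{c_2} + O(1)$ while $b_2 = \lambda_{a_2}^{c_2} + O(1)$ on the vertical boundary/addable locus, one gets $b_1 - a_1 + s_{c_1} < b_2 - a_2 + s_{c_2}$, i.e. $\operatorname{cont}(\gamma_1) < \operatorname{cont}(\gamma_2)$ — after checking that the $O(1)$ corrections, which are $0$ or $\pm 1$ depending on whether the node is a genuine boundary node, a virtual node, or an addable node, do not spoil a strict inequality once we remember contents differ by multiples of $e \ge 2$. The case $c_1 > c_2$ is symmetric, using the chain the other way (through the cyclic inequality if necessary); the case $c_1 = c_2$ is the elementary monotonicity argument above. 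In the borderline situation $\operatorname{cont}(\gamma_1) = \operatorname{cont}(\gamma_2)$ the same bookkeeping forces $c_1 > c_2$, which is exactly the second clause of the definition of $<$, so $\gamma_1 < \gamma_2$ in that case too.

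\textbf{Main obstacle.} The genuinely delicate point is the careful handling of the $0/\pm1$ discrepancies between "$b_i = \lambda_{a_i}^{c_i}$" for a boundary node and "$b_i$ is one more than the adjacent row" for an addable node, together with the edge cases where $\gamma_i$ is a virtual node (so $\lambda_{a_i}^{c_i} = 0$ and $a_i$ is only bounded below). I expect to handle this by noting that Lemma~\ref{L1} only asserts a strict content inequality when $\lambda_{a_1}^{c_1} < \lambda_{a_2}^{c_2}$ \emph{strictly}, that residues being equal means contents jump in steps of $e \ge 2$, and that the FLOTW chain inequalities are sharp enough that a strict gap in row lengths produces a strict gap of at least the necessary size in the index-plus-shift quantity $-a_i + s_{c_i}$; the bounded corrections then cannot reverse the conclusion. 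This is precisely the kind of bounded-error argument that \cite[Lemma 5.7.17]{GJ} already carries out for removable and addable nodes, and the remark preceding the lemma states that the same proof goes through verbatim once "removable node" is replaced by "extended node of the vertical boundary", so I would cite that and only spell out the modification in the virtual-node case.
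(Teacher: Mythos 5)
Your proposal is correct and follows essentially the same route as the paper: the paper gives no written-out proof for Lemma~\ref{L1}, but simply cites \cite[Lemma 5.7.17]{GJ} in the sentence preceding the statement and notes that the argument there (for removable and addable nodes) carries over once ``removable node'' is replaced by ``extended node of the vertical boundary'', using only condition~(1) of Proposition~\ref{flotwdef}. Your content-comparison sketch --- chaining the FLOTW inequalities to force $-a_1+s_{c_1}\le -a_2+s_{c_2}-1$ (or $\le -a_2+s_{c_2}+e-1$ going the cyclic way), then absorbing the $0$-or-$1$ gap between $b_i$ and $\lambda_{a_i}^{c_i}$ using that contents of two $\mathfrak{j}$-nodes differ by a multiple of $e$ --- accurately fills in what that citation leaves implicit, and your deliberate restriction to vertical-boundary and addable nodes matches both the cited lemma's scope and every place the present paper actually invokes Lemma~\ref{L1}.
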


The second lemma concerns nodes with possible distinct residues:

\begin{lemma}\label{LL1}
Let  ${\bf s}=(s_1,\ldots,s_l)\in \mathcal{S}_e^l$ and  $\ulambda$ be an $l$-partition satisfying $(1)$ in Proposition \ref{flotwdef}. Let  $\gamma_1=(a_1,b_1,c_1)$ be a removable node for $\ulambda$ 
and let $\gamma_2=(a_2,b_2,c_2)$ be a $\mathfrak{j}$-node of $\ulambda$ in the extended Young diagram. Assume that $\lambda_{a_1}^{c_1}=\lambda_{a_2}^{c_2}$ 
 then we have 
 \begin{enumerate}
\item  $\operatorname{cont} (\gamma_2)\geq \operatorname{cont} (\gamma_1)$ if $c_1<c_2$
\item  $\operatorname{cont} (\gamma_2)\geq \operatorname{cont} (\gamma_1)-e$ if $c_1>c_2$ 
\end{enumerate}
\end{lemma}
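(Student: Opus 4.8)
The plan is to unwind the definitions of content and turn the inequalities into statements about the quantities $b_i - a_i + s_{c_i}$, using crucially that $\gamma_1$ is a removable node and that the hypothesis ${\bf s}\in\mathcal{S}_e^l$ forces $0\le s_{c}-s_{c'}\le e$ for $c<c'$. First I would record the elementary fact that a removable node $\gamma_1=(a_1,b_1,c_1)$ of $\ulambda$ satisfies $b_1=\lambda^{c_1}_{a_1}$ (it lies in the vertical boundary), so $\operatorname{cont}(\gamma_1)=\lambda^{c_1}_{a_1}-a_1+s_{c_1}$. For $\gamma_2=(a_2,b_2,c_2)$, note that since $\gamma_2$ is a $\mathfrak{j}$-node lying in the extended Young diagram, the relevant comparison should be reduced to the case where $\gamma_2$ also sits on the vertical boundary in row $a_2$, i.e. $b_2=\lambda^{c_2}_{a_2}$, because moving $\gamma_2$ leftward along its row only decreases its content, and the unique $\mathfrak{j}$-node of $\mathcal{E}_{\mathfrak j}(\ulambda)$ in a given component with a given content is governed by Lemma \ref{L1} / the structure of $\mathcal{E}_{\mathfrak j}$; I would pin down exactly which extended node $\gamma_2$ is allowed to be and handle the at-most-$e$ ambiguity coming from the residue.

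With $\lambda^{c_1}_{a_1}=\lambda^{c_2}_{a_2}=:k$, the difference of contents becomes
$$\operatorname{cont}(\gamma_2)-\operatorname{cont}(\gamma_1)=(b_2-a_2+s_{c_2})-(k-a_1+s_{c_1}),$$
and after the reduction $b_2=k$ this is $(a_1-a_2)+(s_{c_2}-s_{c_1})$. So the two cases amount to controlling $a_1-a_2$ against $s_{c_1}-s_{c_2}$. Here is where Lemma \ref{L1} does the real work: if $a_1-a_2$ were too negative (i.e. $a_2$ too large relative to $a_1$ in a way that would violate the claimed bound), I would compare $\lambda^{c_2}_{a_2'}$ for a suitable row $a_2'$ with $\lambda^{c_1}_{a_1}$ using axiom $(1)$ of Proposition \ref{flotwdef} — the inequalities $\lambda^j_i\ge\lambda^{j+1}_{i+s_{j+1}-s_j}$ and $\lambda^l_i\ge\lambda^1_{i+e+s_1-s_l}$ — to derive a contradiction with $\lambda^{c_2}_{a_2}=k$. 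Concretely, in case $(1)$ ($c_1<c_2$), axiom $(1)$ chained from component $c_1$ down to component $c_2$ gives $\lambda^{c_1}_{a}\ge\lambda^{c_2}_{a+s_{c_2}-s_{c_1}}$, and since $0\le s_{c_2}-s_{c_1}$... wait, for $c_1<c_2$ one has $s_{c_1}\le s_{c_2}$ is false under ${\bf s}\in\mathcal S_e^l$; rather $s_{c_1}-s_{c_2}\in[0,e]$, so $s_{c_2}-s_{c_1}\in[-e,0]$. I would use the correctly-oriented chain and the monotonicity of the parts $\lambda^c_1\ge\lambda^c_2\ge\cdots$ to conclude $a_1\ge a_2$ (hence $\operatorname{cont}(\gamma_2)-\operatorname{cont}(\gamma_1)=(a_1-a_2)+(s_{c_2}-s_{c_1})\ge -e$, and combined with $s_{c_2}-s_{c_1}\le 0$... ) — the precise sign bookkeeping between the two cases is exactly the content of the two conclusions, and I would set it up so that $c_1<c_2$ yields the stronger bound $\ge 0$ and $c_1>c_2$ yields $\ge -e$, the loss of $e$ coming from the wrap-around inequality $\lambda^l_i\ge\lambda^1_{i+e+s_1-s_l}$.

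The main obstacle I expect is the bookkeeping around $\gamma_2$ not being assumed to lie on the vertical boundary: it is merely a $\mathfrak{j}$-node in the extended Young diagram, so I must first justify replacing it by the extended node of $\mathcal{E}_{\mathfrak j}(\ulambda)$ in its component and row (or argue directly that any $\mathfrak j$-node of $\ulambda$ in row $a_2$ of component $c_2$ has content congruent to $\operatorname{res}(\gamma_2)$ and that the extreme relevant one is on the boundary), and carefully track how the residue condition (content defined only mod $e$) interacts with the $[0,e]$-width of the multicharge — this is precisely where the $-e$ shift in conclusion $(2)$ must come out exactly, with no further loss. Once that reduction is clean, the inequality is a direct consequence of axiom $(1)$ via Lemma \ref{L1} applied to the rows of $\gamma_1$ and $\gamma_2$, plus the monotonicity of parts within a single partition.
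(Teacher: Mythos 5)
Your skeleton is the paper's: use removability to pin $b_1=\lambda^{c_1}_{a_1}$, take $\gamma_2$ on the vertical boundary so $b_2=\lambda^{c_2}_{a_2}$, write $\operatorname{cont}(\gamma_2)-\operatorname{cont}(\gamma_1)=(a_1-a_2)+(s_{c_2}-s_{c_1})$, and bound $a_2$ using axiom $(1)$ of Proposition~\ref{flotwdef}. But you stall exactly at the point that has to be firm: the sign of $s_{c_2}-s_{c_1}$. Despite the literal inequality $0\le s_i-s_j\le e$ in the displayed definition of $\mathcal S_e^l$, the multicharge must be \emph{increasing}, $s_1\le\cdots\le s_l$ with $s_l-s_1\le e$; this is forced by axiom $(1)$ itself (the index $i+s_{j+1}-s_j$ must stay positive) and by the wrap-around inequality, so the definition carries a sign typo and you should not let it derail you. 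Once you commit to $s_{c_1}\le s_{c_2}$ for $c_1<c_2$, case $(1)$ is one line: removability gives $\lambda^{c_1}_{a_1}>\lambda^{c_1}_{a_1+1}\ge\lambda^{c_2}_{a_1+1+s_{c_2}-s_{c_1}}$ (chaining axiom $(1)$ from component $c_1$ to $c_2$), and since $\lambda^{c_2}_{a_2}=\lambda^{c_1}_{a_1}$ and $\lambda^{c_2}$ is weakly decreasing this forces $a_2\le a_1+s_{c_2}-s_{c_1}$, hence $\operatorname{cont}(\gamma_2)\ge\operatorname{cont}(\gamma_1)$.

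The second genuine gap is case $(2)$: you gesture at the wrap-around inequality $\lambda^l_i\ge\lambda^1_{i+e+s_1-s_l}$ as the source of the $-e$, which is the right instinct, but you never close it. The clean way (and what the paper does) is to cyclically rotate: replace $\ulambda$ by $(\lambda^{c_1},\ldots,\lambda^l,\lambda^1,\ldots,\lambda^{c_1-1})$ and ${\bf s}$ by $(s_{c_1},\ldots,s_l,s_1+e,\ldots,s_{c_1-1}+e)$ as in Remark~\ref{fla}; this stays in $\mathcal S_e^l$, puts $\gamma_1$'s component before $\gamma_2$'s, and shifts $s_{c_2}$ up by $e$, so case $(1)$ applied to the rotated data gives precisely $\operatorname{cont}(\gamma_2)\ge\operatorname{cont}(\gamma_1)-e$. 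Finally, you spend too much energy worrying about $\gamma_2$ being a general extended node and about residues: the argument makes no use of the residue of $\gamma_2$, and the hypothesis $\lambda^{c_2}_{a_2}=\lambda^{c_1}_{a_1}$ together with the intended reading (used everywhere in the paper) is that $\gamma_2$ lies on the vertical boundary, so $b_2=\lambda^{c_2}_{a_2}$ and there is no further reduction to justify.
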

\begin{proof}
Assume first that $c_1< c_2$, by hypothesis, as $\gamma_1$ is removable, we have $\lambda_{a_1}^{c_1}>\lambda_{a_1+1}^{c_1}\geq \lambda^{c_2}_{a_1+1+s_{c_2}-s_{c_1}}$. This implies that $a_2\leq a_1+s_{c_2}-s_{c_1}$ and thus that 
 $\operatorname{cont} (\gamma_1)\geq \operatorname{cont} (\gamma_2)$.
 Assume that $c_1>c_2$ then we consider the multipartition $\ulambda={(\lambda^{c_1} ,...,\lambda^{l},\lambda^1,\ldots,\lambda^{c_1-1})}\in    \Uglov{e}{{\bf s}'}$
    for ${\bf s}'=(s_{c_1},\ldots,s_l,s_1+e,\ldots,s_{c_1-1}+e)$  which is also in  
$\mathcal{S}_e^l$ by  Remark \ref{fla} and we can apply the first case to deduce  the result.

\end{proof}

\begin{Rem}\label{LLrem}
From the above result, we deduce that if 
 ${\bf s}=(s_1,\ldots,s_l)\in \mathcal{S}_e^l$ and if  $\ulambda$ satisfies condition $(1)$ 
  in Proposition \ref{flotwdef}, 
 if $\gamma_1=(a_1,b_1,c_1)$ and 
 $\gamma_2=(a_2,b_2,c_2)$
  are two nodes of the same residue  in the vertical boundary with $b_1=b_2$ then we have 
  $|\textrm{cont}(\gamma_1)-\textrm{cont}(\gamma_2)|\leq e$. 
   It follows from the fact that all the components of $\ulambda$ are $e$-regular and thus that 
    there exists $a_1'<a+e$ such that $\lambda_{a_1'}^{c_1}=b_1$ and $(a_1',b_1,c_1)$ is removable.

\end{Rem}

\subsection{Combinatorial study of FLOTW multipartitions}

We now need a series of results which will help us to obtain  a generalization of our main theorem in the case of FLOTW $l$-partitions.  
First, the following proposition  studies what happens if a multipartition satisfies the first condition without satisfying the second condition of FLOTW multipartition.

\begin{Prop}\label{period}
Let  ${\bf s}=(s_1,\ldots,s_l)\in \mathcal{S}_e^l$ and  $\ulambda$ be an $l$-partition satisfying $(1)$ in Proposition \ref{flotwdef}.  Assume that condition $(2)$ in Proposition \ref{flotwdef} is not satisfied, then there exists a sequence of extended nodes $(\gamma_1,\ldots,\gamma_e)$ in the vertical boundary of the Young diagram of $\ulambda$ such that for all $i=1,\ldots,e-1$, we have $\operatorname{cont}(\gamma_{i+1})=\operatorname{cont} (\gamma_i)+1$ and $c_{i+1}\geq c_i$.
\end{Prop}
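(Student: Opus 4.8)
The plan is to exploit the failure of FLOTW condition $(2)$ directly and build the desired sequence of extended nodes from the nodes where all residues are represented. Suppose condition $(2)$ fails for some value $k>0$: then among the residues of the nodes of the vertical boundary of the form $(a,\lambda^c_a,c)$ with $\lambda^c_a=k$, every element of $\{0,1,\ldots,e-1\}$ occurs. So for each residue $\mathfrak{j}\in\{0,\ldots,e-1\}$ we may pick such a node $\delta_{\mathfrak{j}}=(a_{\mathfrak{j}},k,c_{\mathfrak{j}})$ on the vertical boundary with $\operatorname{res}(\delta_{\mathfrak{j}})=\mathfrak{j}$. The aim is to relabel and adjust these $e$ nodes so that their contents form a run of $e$ consecutive integers with weakly increasing components.

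First I would normalize by contents. All the $\delta_{\mathfrak{j}}$ lie in the $b=k$ column, so $\operatorname{cont}(\delta_{\mathfrak{j}})=k-a_{\mathfrak{j}}+s_{c_{\mathfrak{j}}}$, and the residues $\{0,\ldots,e-1\}$ are exactly realized, hence the contents hit every residue class mod $e$ exactly once among the chosen nodes. Using Remark \ref{LLrem}, any two of these nodes with the \emph{same} residue (there is only one chosen per residue, but the point is to control the spread across all of them) have contents differing by at most $e$ in absolute value; more precisely, ordering the chosen nodes $\gamma_1<\gamma_2<\dots<\gamma_e$ by the order $<$ on $\mathcal{E}_{\mathfrak{j}}$ — or rather simply by content — consecutive ones have contents differing by a positive amount, and since together they cover all residues mod $e$ and by Remark \ref{LLrem} the total spread $\operatorname{cont}(\gamma_e)-\operatorname{cont}(\gamma_1)$ is at most $e$, it must equal exactly $e-1$ with all gaps equal to $1$. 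This gives $\operatorname{cont}(\gamma_{i+1})=\operatorname{cont}(\gamma_i)+1$ for $i=1,\ldots,e-1$.

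Next I would establish the component inequality $c_{i+1}\geq c_i$. Since $\operatorname{cont}(\gamma_i)<\operatorname{cont}(\gamma_{i+1})$, the order relation $<$ on $\mathcal{E}_{\mathfrak{j}}$ is not directly applicable (the $\gamma_i$ have distinct residues), so instead I would apply Lemma \ref{LL1} with $\gamma_1:=\gamma_i$ and $\gamma_2:=\gamma_{i+1}$, which lie in the $b=k$ column so $\lambda^{c_i}_{a_i}=\lambda^{c_{i+1}}_{a_{i+1}}=k$; one may arrange (using $e$-regularity of each component, exactly as in the proof of Remark \ref{LLrem}) that each $\gamma_i$ is actually taken to be removable in its column. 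Then Lemma \ref{LL1} says: if $c_i<c_{i+1}$ then $\operatorname{cont}(\gamma_{i+1})\geq\operatorname{cont}(\gamma_i)$, which holds; and if $c_i>c_{i+1}$ then $\operatorname{cont}(\gamma_{i+1})\geq\operatorname{cont}(\gamma_i)-e$, also automatic — so the lemma alone does not yet force $c_{i+1}\ge c_i$. To rule out $c_i>c_{i+1}$, I would use the ${\bf s}\in\mathcal{S}^l_e$ constraint $0\le s_{c_{i+1}}-s_{c_i}\le e$ combined with $\operatorname{cont}(\gamma_{i+1})=\operatorname{cont}(\gamma_i)+1$: writing contents as $k-a+s_c$, a drop in component forces, via condition $(1)$ of Proposition \ref{flotwdef} applied to the equal-size rows of size $k$, that the row index moves in a way incompatible with the content increasing by exactly $1$ unless the spread of all $e$ chosen contents exceeds $e-1$, contradicting the previous step. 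I expect this last point — precisely tracking how a component decrease interacts with the forced unit content increments and the $\mathcal{S}^l_e$ bounds to derive a contradiction — to be the main obstacle; it is essentially a pigeonhole argument on the multiset of contents in the $b=k$ column, where having all $e$ residues appear while keeping total spread $\le e$ leaves no room for the components to decrease along the content-sorted order. Once $c_{i+1}\ge c_i$ is in hand, relabel the sorted sequence as $(\gamma_1,\ldots,\gamma_e)$ and we are done.
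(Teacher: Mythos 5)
Your plan (pick one vertical-boundary node at row-length $k$ per residue, sort by content, argue the contents form an $e$-run and that components are weakly increasing) is plausible in outline, and it is genuinely different from the paper's proof, which builds the sequence inductively from the smallest content upward. However, as written there are real gaps.

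The central one is the spread bound. You invoke Remark~\ref{LLrem} to claim that the total content spread of your $e$ chosen nodes is at most $e$. But Remark~\ref{LLrem} only bounds the content gap between two vertical-boundary nodes \emph{of the same residue} with the same row-length $b$. Your $e$ nodes have $e$ pairwise distinct residues, so the remark says nothing about them, and the parenthetical ``there is only one chosen per residue, but the point is to control the spread across all of them'' does not bridge this. The correct tool is Lemma~\ref{LL1}: the node of maximal content among all nodes $(a,k,c)$ on the vertical boundary is automatically the removable node at row-length $k$ in its component $c_{\max}$; applying Lemma~\ref{LL1} with this removable node and the minimal-content node (in component $c_{\min}$) forces either $c_{\min}=c_{\max}$ (then the spread is $\le e-2$), or $c_{\min}<c_{\max}$ with spread $\le e$, while $c_{\min}>c_{\max}$ is impossible. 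Even with the bound, ``spread $\le e$ hence exactly $e-1$ with all gaps $1$'' is too quick: if the spread equals $e$ you need the pigeonhole remark that each content $j_{\min}+p$ with $0<p<e$ is the only candidate in range for residue $\mathfrak{j}_0+p$ and therefore must occur, so a run $\{j_{\min},\dots,j_{\min}+e-1\}$ is present and you restrict your choice to that run.

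The second gap you flag yourself: the monotonicity $c_{i+1}\ge c_i$ is left as ``the main obstacle.'' It in fact also follows from Lemma~\ref{LL1}: if the node at content $j+p$ lies in component $c_p$ while the node at content $j+p+1$ lies in a strictly smaller component $c_{p+1}<c_p$, then the removable node of $c_{p+1}$ at row-length $k$ has content $\ge j+p+1$, and Lemma~\ref{LL1}(1) applied to this removable node versus the node in $c_p>c_{p+1}$ gives $j+p\ge j+p+1$, a contradiction. Your sketch via the $\mathcal{S}^l_e$ inequalities and a ``pigeonhole on contents'' never reaches this; as submitted the proposal is incomplete. For comparison, the paper sidesteps the global spread argument altogether: it walks up in content starting from the minimal-content node, staying in one component until the run ends, then uses Lemma~\ref{LL1} to show the next needed residue sits in a strictly larger component at exactly the next content (ruling out content jumps of $e$ or $2e$ by separate case analysis). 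Both routes hinge on Lemma~\ref{LL1}; neither uses Remark~\ref{LLrem} in the way you propose.
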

\begin{proof}
By hypothesis, there exists $k> 0$ such that the set of residues of the nodes of the form
 $(a,k,c)$ in the vertical boundary of  the extended Young diagram of $\ulambda$ is $\mathbb{Z}/e\mathbb{Z}$. Let us denote by $\mathcal{N}$ the set of all such nodes. Among the nodes with minimal content $j$ in $\mathcal{N}$, we let $\gamma_1=(a_1,k,c_1)$ to be the node with component $c_1$ minimal. If $(a_1-1,k,c_1)$ is in $\mathcal{N}$ then set $\gamma_2=(a_1-1,k,c_1)$ and consider $(a_1-2,k,c_1)$ etc. Let us assume that 
  $(a_1-r_1,k,c_1)$ is not in $\mathcal{N}$ so that the nodes $\gamma_i=(a_i-(i-1),k,c_1)$ with $i=1,\ldots,r_1$ are already defined. We have $\operatorname{cont} (\gamma_{r_1})=j+r_1-1$ and we remove the nodes $\gamma_1$, \ldots, $\gamma_{r_1}$ from $\mathcal{N}$. 
  
  In $\mathcal{N}$, we can assume that we have a node $\eta=(a_2,k,c_2)$ with residue $j+r_1 (\textrm{mod }e)$ (otherwise we have already found our sequence).  In addition,  we can assume that the content of $\eta$ is minimal, and that  $c_2$ is minimal among the nodes with this minimal content. First note that 
  we must have $c_2>c_1$. This follows from Lemma \ref{LL1} together with the fact that $j$ is minimal.
  
   Now  the content of $\eta$ is of the form $j+r_1+t.e$ with $t\in \mathbb{Z}$. By the minimality of $j$, we have $t\geq 0$.  If $t>1$ then by Lemma \ref{LL1}, we must have a removable node  of the form 
   $(a',k,c_2)$ with content less than $j+e$. This implies that the set of residues of the  nodes of the form
   $(a',k,c_2)$ is $\{0,1,\ldots,e-1\}$ and the desired sequence is given by a sequence formed by 
   this kind of nodes. 
  Let us now assume that $t=1$, Lemma \ref{LL1} implies that we can assume that we have a removable node of the form 
   $(a',k,c)$  with content $j+e$. We obtain $\lambda_{a_1}^{c_1}-a_1+s_{c_1}=\lambda^{c'}_{a'}-a'+s_{c'}-e$ and thus 
   $a_1=a'+s_{c_1}-s_{c'}+e$. We know that  $\lambda_{a'-{r_1}}^{c'}=k$ and that $\lambda^{c'}_{a'-r_1}\geq \lambda^{c_1}_{a'-r_1+s_{c_1}-s_{c'}+e}$. This implies $\lambda^{c_1}_{a_1-r_1}\leq k$ which contradicts the fact that    $(a_1-r_1,k,c_1)$ is not in $\mathcal{N}$. 
 
 So we have that $t=0$, we then set $\gamma_{r_1+1}=\eta$. If $(a_2-1,k,c_2)$ is in $\mathcal{N}$ then set $\gamma_{r_1+2}=(a_2-1,k,c_2)$ and consider $(a_2-2,k,c_2)$ etc. Let us assume that 
  $(a_2-r_2,k,c_2)$ is not in $\mathcal{N}$ so that the nodes $\gamma_{i+r_1}=(a_2-(i-1),k,c_2)$ with $i=1,\ldots,r_2$ are already defined. We continue by considering a node of residue $j+r_1 +r_2(\textrm{mod }e)$ 
 and this process ends when we consider a node of residue $j+e-1 (\textrm{mod  }e)$.

\end{proof}
\begin{Rem}
The above sequence of nodes give the existence of a period as defined in \cite[\S 2.3]{JL2}.
 This notion plays  a crucial role in the combinatorics of crystals in affine type $A$ (see \cite{JL2, Ge}).

\end{Rem}

\begin{exa}
  Let $l=3$ and take the $3$-partition $\ulambda=(4.3.3,3.2,3.1)$ with ${\bf s}=(0,0,1)$ and $e=4$. Then $\ulambda$ satisfies the hypotheses of the proposition and the  Young diagram 
   with content is:
  
\vspace{0,5cm}  
  
  \centerline{\small{
 \Bigg(\  \ \ \ \ytableausetup
{mathmode}\begin{ytableau}
  { 0} & { 1} & { 2} & 3\\
 \overline{1} & 0 & *(lightgray) { 1} \\
 \overline{2} & \overline{1} & *(lightgray){ 0} \\
\end{ytableau}\ ,
\begin{ytableau}
  { 0} & { 1} & *(lightgray){ 2} \\
\overline{1}& 0 \\
\end{ytableau}\ ,
\begin{ytableau}
  { 1} & { 2} & *(lightgray){ 3} \\
 0 \\
\end{ytableau}\  \ \ \  \Bigg)}}

\vspace{0,5cm}  

We see the sequence of the desired nodes of the proposition in gray in the above diagram. 
\end{exa}

\begin{lemma}\label{ll1}
Let  ${\bf s}=(s_1,\ldots,s_l)\in \mathcal{S}_e^l$ and let $\ulambda\in \Uglov{e}{\bf s}$. We assume that there exist $c\in \{1,\ldots,l\}$ and $i\in \mathbb{Z}_{>0}$ such that 
$$\lambda^c_i-1=\lambda^c_{i+1}=\ldots=\lambda^c_{i+e-1}\neq 0.$$
Denote $\gamma_c:=(i,\lambda^c_i,c)$ and $\mathfrak{j}:=\operatorname{res} (\gamma_c)$.   Then the node $\eta:=(i+e-1,\lambda^c_{i+e-1},c)$ is a removable $\mathfrak{j}$-node and for each $m\in \{1,\ldots,l\}\setminus \{c\}$, there exists  an extended  $\mathfrak{j}$-node of the horizontal boundary  $\gamma_m$ of the partition $\lambda^m$ for $m\in \{1,\ldots,l\}\setminus \{c\}$   such that:
$$\gamma_c >_{co} \gamma_{c+1} >_{co}  \ldots >_{co} \gamma_l >_{co} \gamma_1 >_{co} \ldots >_{co} \gamma_{c-1}>_{co} \eta.$$
\end{lemma}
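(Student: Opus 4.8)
The plan is to exploit the hypothesis $\bf s\in\mathcal S_e^l$ and the condition $(1)$ of Proposition~\ref{flotwdef} to locate, in each component $\lambda^m$ with $m\neq c$, an extended $\mathfrak j$-node lying in the horizontal boundary, and then to glue these together into a single $<_{co}$-chain. First I would record the elementary facts: since $\lambda^c_i-1=\lambda^c_{i+1}=\cdots=\lambda^c_{i+e-1}\neq 0$, the node $\eta=(i+e-1,\lambda^c_{i+e-1},c)$ is indeed removable, and $\operatorname{cont}(\eta)=\operatorname{cont}(\gamma_c)+ (e-1)-(-( e-1)) $... more carefully, $\operatorname{cont}(\gamma_c)=\lambda^c_i-i+s_c$ and $\operatorname{cont}(\eta)=\lambda^c_{i+e-1}-(i+e-1)+s_c=(\lambda^c_i-1)-(i+e-1)+s_c=\operatorname{cont}(\gamma_c)-e$, so $\operatorname{res}(\eta)=\operatorname{res}(\gamma_c)=\mathfrak j$, as claimed. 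Thus the ``outer'' endpoints of the desired chain have contents differing by exactly $e$, which is precisely the content-drop realised by a full cyclic pass $\gamma_c>_{co}\gamma_{c+1}>_{co}\cdots>_{co}\gamma_l>_{co}\gamma_1>_{co}\cdots>_{co}\gamma_{c-1}>_{co}\eta$ through all $l$ components once (recall from \S\ref{refor} that $\gamma'<_{co}\gamma$ happens either when they share a content and components differ by one, or when contents differ by $e$ with the wrap-around $c'=1$, $c=l$).

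Next I would construct the intermediate nodes. For each $m\in\{1,\dots,l\}\setminus\{c\}$ I need an extended $\mathfrak j$-node $\gamma_m$ of the horizontal boundary of $\lambda^m$ whose content is forced by the chain condition: going around from $\gamma_c$ we must have $\operatorname{cont}(\gamma_m)=\operatorname{cont}(\gamma_c)$ for $m>c$ and $\operatorname{cont}(\gamma_m)=\operatorname{cont}(\gamma_c)-e=\operatorname{cont}(\eta)$ for $m<c$ (the content drops by $e$ exactly once, on the wrap from $l$ back to $1$). So the real content is: in component $\lambda^m$ there is a \emph{unique} extended $\mathfrak j$-node of that prescribed content (this is the uniqueness statement from \S\ref{notimp}), and I must show it lies in the horizontal boundary, i.e. that it is a virtual node of type $(2)$ or a node $(a,b,m)$ with $(a+1,b,m)\notin\mathcal Y^{\mathrm{ext}}(\ulambda)$. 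This is where condition $(1)$ of FLOTW enters: the inequalities $\lambda^{c}_i\geq\lambda^{c+1}_{i+s_{c+1}-s_c}\geq\cdots$ and the wrap-around inequality $\lambda^l_i\geq\lambda^1_{i+e+s_1-s_l}$, applied to the rows $i$ and $i+e-1$ of $\lambda^c$ that bracket the repeated-part block, pin down the lengths of the corresponding rows in the neighbouring components and force the relevant cell to sit on (or past) the horizontal edge. Concretely I would chase the inequality through one component at a time, using Lemma~\ref{L1}/Lemma~\ref{LL1} to compare contents and $e$-regularity of the components (which holds since $\ulambda$ is FLOTW, hence each $\lambda^m$ is $e$-regular) to guarantee that a node of the required residue and content is available on the horizontal boundary rather than strictly interior.

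Finally I would assemble the chain: having produced $\gamma_{c+1},\dots,\gamma_l$ all of content $\operatorname{cont}(\gamma_c)$ with strictly increasing components, and $\gamma_1,\dots,\gamma_{c-1}$ all of content $\operatorname{cont}(\eta)=\operatorname{cont}(\gamma_c)-e$ with strictly increasing components, the consecutive-node criterion of \S\ref{refor} gives $\gamma_c>_{co}\gamma_{c+1}>_{co}\cdots>_{co}\gamma_l$ (same content, components dropping by one as we read the chain downward — note the chain is written in decreasing $<_{co}$-order, so I should double-check the direction: $\gamma>_{co}\gamma'$ with equal content means $\operatorname{comp}(\gamma)=\operatorname{comp}(\gamma')-1$, consistent with $c,c+1,\dots,l$), then $\gamma_l>_{co}\gamma_1$ is the wrap-around step (content drops by $e$, from $l$ to $1$), then $\gamma_1>_{co}\cdots>_{co}\gamma_{c-1}>_{co}\eta$ again at the lower content level. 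I expect the main obstacle to be the second step: showing each intermediate node actually lands on the horizontal boundary and not in the interior. The content is uniquely determined, but ruling out the interior case requires carefully combining the FLOTW inequality $(1)$ across components with $e$-regularity, and handling the boundary cases where the neighbouring component is empty or much shorter (so the relevant node is virtual of type $(2)$). I would organise this as a short case distinction on whether $\lambda^m_a>0$ for the relevant row $a$, treating the virtual case separately and uniformly.
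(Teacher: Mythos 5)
Your overall strategy is aligned with the paper's: fix the contents forced by the $<_{co}$-chain (same content $j=\operatorname{cont}(\gamma_c)$ in components $c+1,\dots,l$; content $j-e$ in components $1,\dots,c-1$), use FLOTW condition $(1)$ to locate the node of that content in each $\lambda^m$, and finish by Remark~\ref{fla} for the wrap-around. However there is one genuine gap and one imprecision that would derail the argument.

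First, your assertion that $\eta=(i+e-1,\lambda^c_{i+e-1},c)$ is removable \emph{because} $\lambda^c_i-1=\lambda^c_{i+1}=\cdots=\lambda^c_{i+e-1}\neq 0$ does not follow from the hypothesis. Nothing there constrains $\lambda^c_{i+e}$; a priori one could have $\lambda^c_{i+e}=\lambda^c_{i+e-1}$, and then $\eta$ is not removable. The correct argument (and the one the paper gives) is that if $\eta$ were not removable, the nodes $(i+1,\lambda^c_i-1,c),\ldots,(i+e,\lambda^c_i-1,c)$ would all lie on the vertical boundary of parts of length $\lambda^c_i-1$, contributing contents $j-2,\dots,j-e-1$, i.e.\ all $e$ residues, violating condition $(2)$ of Proposition~\ref{flotwdef}. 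You cannot get removability for free from the hypothesis alone.

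Second, for the central step — showing that the content-$j$ (resp.\ $j-e$) node in component $c'$ is on the horizontal boundary rather than strictly interior — you invoke ``$e$-regularity of the components.'' That is not strong enough. What the paper actually uses is again condition $(2)$, applied \emph{across} components: if $\lambda^c_{i+e-1}=\lambda^{c'}_{i+s_{c'}-s_c}$, the node $(i+s_{c'}-s_c,\lambda^{c'}_{i+s_{c'}-s_c},c')$ would have residue $\mathfrak j-1$ on a part of length $\lambda^c_i-1$, and combined with the $e-1$ residues already present on parts of that length inside $\lambda^c$ itself, all $e$ residues would occur, contradicting condition~$(2)$. Per-component $e$-regularity would not see this obstruction. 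Also, your plan cites Lemma~\ref{L1}/\ref{LL1}, but the paper's proof does not use them here; it chases the condition-$(1)$ inequalities directly. So the plan is close in structure to the paper's proof, but you need to replace both appeals to ``the hypothesis'' and to ``$e$-regularity'' by explicit appeals to FLOTW condition~$(2)$ to make the argument go through.
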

\begin{proof}
First, note that $\eta$ must be removable otherwise $\ulambda$ does not satisfied the second condition to be a FLOTW $l$-partition and it is clear that its residue is the same as the one of $\gamma$.  Let us denote by ${j}\in \mathbb{Z}$ the content of $\gamma_c$. We will see that for all $c'>c$ there is an extended  node $\gamma_{c'}$ in the horizontal boundary of $\lambda^{c'}$ with content $j$ and for all $c'<c$ there is an extended node $\gamma_{c'}$ in the horizontal boundary of $\lambda^{c'}$ with content $j-e$. We then obtain:
$$\gamma_c >_{co} \gamma_{c+1} >_{co} \ldots \gamma_l >_{co} \gamma_1 >_{co} \ldots >_{co} \gamma_{c-1}>_{co} \eta.$$
which is what we want. Let $c'\in \{1,\ldots,l\}\setminus \{c\}$.

    \begin{itemize}
    \item Assume that $c'>c$. By hypothesis, we have $\lambda^{c'}_{i+s_{c'}-s_c-1}\geq \lambda^{c}_{i+e-1}$.
     As we know that  
     $\lambda^c_i-1=\lambda^{c}_{i+e-1}$, we obtain that  $(i+s_{c'}-s_c-1,\lambda^{c}_{i+e-1},c')$ is a node of $\ulambda$ and its content is $j$. Moreover, we have $\lambda^c_i\geq \lambda^{c'}_{i+s_{c'}-s_c}$. If 
      $\lambda^c_i= \lambda^{c'}_{i+s_{c'}-s_c}$ then the content of the node $(i+s_{c'}-s_c,\lambda^{c}_{i+s_{c'}-s_c},c')$ is $j$ and as $\lambda^c_{i+1}\geq \lambda^{c'}_{i+s_{c'}-s_c+1}$, it is removable. 
      We can thus set $\gamma_{c'}=(i+s_{c'}-s_c,\lambda^{c}_{i+s_{c'}-s_c},c')$. 
       Otherwise $\lambda^c_i> \lambda^{c'}_{i+s_{c'}-s_c}$  and thus  
      $\lambda^c_{i+e-1}\geq  \lambda^{c'}_{i+s_{c'}-s_c}$. Now we cannot have 
     $\lambda^c_{i+e-1}=  \lambda^{c'}_{i+s_{c'}-s_c}$: the residue of the node $(i+s_{c'}-s_c,
     \lambda^{c'}_{i+s_{c'}-s_c},c')$ would be $\mathfrak{j}-1$ and 
   the second condition of FLOTW $l$-partitions would be  violated. Thus, we get  $\lambda^c_{i+e-1}>  \lambda^{c'}_{i+s_{c'}-s_c}$
  and    $(i+s_{c'}-s_c-1,\lambda^{c}_{i+e-1},c')$ is in the horizontal boundary. We can thus set 
  $\gamma_{c'}=(i+s_{c'}-s_c-1,\lambda^{c}_{i+e-1},c')$.

    \item The case  $c'<c$ follows from the above case together with Remark \ref{fla}. 
    \end{itemize}

\end{proof}

The following three lemmas define for a FLOTW multipartition and two associated removable nodes, the notion of {\it $(1)$-connected nodes} and {\it $(2)$-connected nodes} which will be used in the next subsections.

\begin{lemma}\label{l201}
Let  ${\bf s}=(s_1,\ldots,s_l)\in \mathcal{S}_e^l$ and let $\ulambda\in \Uglov{e}{\bf s}$. We assume that there exist $c\in \{1,\ldots,l\}$ and $i\in \mathbb{Z}_{>0}$ such that $\gamma_c:=(i,\lambda^c_i,c)$ is a removable $\mathfrak{j}$-node for some $\mathfrak{j}\in \mathbb{Z}/e\mathbb{Z}$. Assume that $\lambda_i^c>1$ and that  the set of residues of the nodes appearing in  the vertical boundary of 
parts of length $\lambda_i^c-1$ is $\{0,1,\ldots,e-1\}\setminus \{\mathfrak{j}-1\}$. Then there exists a removable $\mathfrak{j}$-node 
  in the vertical boundary of a part with length  $\lambda_i^c-1$. A removable node satisfying this condition is said to be $(1)$-connected with $\gamma_c$.  
\end{lemma}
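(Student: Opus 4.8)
The statement to prove is Lemma~\ref{l201}: given a FLOTW $l$-partition $\ulambda$, a removable $\mathfrak{j}$-node $\gamma_c=(i,\lambda^c_i,c)$ with $\lambda^c_i>1$, and assuming that the residues occurring in the vertical boundary of parts of length $\lambda^c_i-1$ cover $\{0,1,\ldots,e-1\}\setminus\{\mathfrak{j}-1\}$, one must produce a removable $\mathfrak{j}$-node attached to a part of length exactly $\lambda^c_i-1$. The plan is to exploit the basic counting principle already recorded in \S\ref{notimp}: for a fixed component $\lambda^m$ and a fixed residue, there is exactly one extended node of $\mathcal{E}_{\mathfrak{j}}(\ulambda)$ in that component with any prescribed content in the class of $\mathfrak{j}$, and it is either addable or lies in the boundary. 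I would first set $k:=\lambda^c_i-1\geq 1$ and look, across all components, at the extended nodes of the \emph{vertical} boundary sitting on parts of length $k$ (including possibly virtual ones of type~(3)); these are precisely the candidates whose removal would shorten a length-$k$ part to length $k-1$, i.e. the potential removable $\mathfrak{j}$-nodes we are hunting for.

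The key step is to locate, among these length-$k$ vertical-boundary nodes, one with residue $\mathfrak{j}$ and show it is actually removable (not merely in the horizontal-or-vertical boundary, not virtual). Here is where I expect Lemma~\ref{LLrem} and Proposition~\ref{period} to do the work. Since $\gamma_c$ is removable, the node $\gamma_c':=(i+1,\lambda^c_{i+1},c)=(i+1,k,c)$ — or rather the node of $\lambda^c$ on the top of the column reaching height $k$ — is a vertical-boundary node on a part of length $k$, with residue $\mathfrak{j}-1$ (its content is one less than that of $\gamma_c$). The hypothesis says $\mathfrak{j}-1$ is the \emph{only} residue missing from the length-$(k)$ vertical boundary; wait — one must be careful that the hypothesis is about parts of length $\lambda^c_i-1=k$, and the node just described has length $k$, residue $\mathfrak{j}-1$, so it is consistent. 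The point is: the length-$k$ vertical boundary contains a node of every residue except we are told its residue set is $\{0,\ldots,e-1\}\setminus\{\mathfrak{j}-1\}$; so in particular it contains a node $\delta$ of residue $\mathfrak{j}$. I would then invoke the $e$-regularity of each component (condition~(2) of FLOTW, via the argument in Remark~\ref{LLrem}) to replace $\delta$, if necessary, by a genuinely removable node of the same residue on a part of the same length $k$: $e$-regularity guarantees that within $e$ consecutive rows of a column of height $k$ there is a removable node, and a content/residue bookkeeping using Lemma~\ref{LLrem} (which bounds content gaps by $e$ for same-residue, same-length vertical-boundary nodes) forces that removable node to still have residue $\mathfrak{j}$.

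More precisely, the argument I anticipate is: suppose for contradiction no removable $\mathfrak{j}$-node sits on a length-$k$ part. Every length-$k$ vertical-boundary $\mathfrak{j}$-node is then either a virtual node of type~(3) — impossible, since a virtual vertical-boundary node sits on a part of length $0\ne k$ — or a non-removable node of the Young diagram, meaning the node directly below it also has length $\geq$ something forcing the part to have length $>k$, contradiction; so the only escape is that there is no length-$k$ vertical-boundary $\mathfrak{j}$-node at all. But the hypothesis explicitly lists $\mathfrak{j}$ among the residues that \emph{do} occur. Hence a length-$k$ vertical-boundary $\mathfrak{j}$-node exists, and by the dichotomy it must be removable. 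The main obstacle, and the part requiring genuine care rather than formal manipulation, is the passage from ``a $\mathfrak{j}$-node occurs in the length-$k$ vertical boundary'' to ``a \emph{removable} $\mathfrak{j}$-node occurs there'': one has to rule out that every length-$k$ vertical-boundary node of residue $\mathfrak{j}$ is at the bottom of a column whose continuation keeps the part at length $k$ without ever producing a removable node of that residue — and this is exactly where $e$-regularity (no part repeated $e$ times) together with the content-gap bound of Remark~\ref{LLrem} must be combined: within the $e$ rows realizing value $k$, a removable node exists, and its residue cannot have drifted away from $\mathfrak{j}$ because Lemma~\ref{LLrem} caps the content spread at $e$ and the ``missing residue'' hypothesis pins down which single shift is excluded. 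I would write this contradiction out carefully, tracking contents modulo $e$, as the heart of the proof; the remaining bookkeeping (identifying the node, checking it is in $\mathcal{E}_{\mathfrak{j}}(\ulambda)$) is routine given \S\ref{notimp}.
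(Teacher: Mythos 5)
Your opening move is the right one: since the hypothesis says the residue set of the length-$(\lambda^c_i-1)$ vertical boundary is $\{0,\ldots,e-1\}\setminus\{\mathfrak{j}-1\}$, in particular $\mathfrak{j}$ does occur, so there is a $\mathfrak{j}$-node $\eta=(i',\lambda^{c'}_{i'},c')$ in the vertical boundary with $\lambda^{c'}_{i'}=\lambda^c_i-1$. That is exactly how the paper's proof starts. The gap is in your argument that $\eta$ is (or can be replaced by) a removable node, and both ways you try to close it fail.

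First, your claim that a non-removable length-$k$ vertical-boundary node ``means the node directly below it also has length $\geq$ something forcing the part to have length $>k$'' is not true; if $\eta=(i',k,c')$ is not removable, the node below, $(i'+1,k,c')$, is simply another vertical-boundary node on a length-$k$ part. Second, your back-up via $e$-regularity and Remark~\ref{LLrem} does not deliver what you want: if you descend to the first removable node $(i'',k,c')$ in that column, its content drops by $i''-i'$ relative to $\eta$, so its residue is $\mathfrak{j}-(i''-i')\neq\mathfrak{j}$ whenever $i''>i'$; nothing forces it to stay $\mathfrak{j}$. (You also slipped in computing that $(i+1,\lambda^c_i-1,c)$ has residue $\mathfrak{j}-1$; its content is two less than that of $\gamma_c$, so the residue is $\mathfrak{j}-2$.) The observation you are missing is simpler than either of these: if $\eta=(i',k,c')$ is not removable then $\lambda^{c'}_{i'+1}=k$ as well, and $(i'+1,k,c')$ is then a node of the vertical boundary on a part of length $k$ with residue $\mathfrak{j}-1$ — but $\mathfrak{j}-1$ is precisely the residue the hypothesis says is missing. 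This contradiction forces $\eta$ to be removable, with no appeal to $e$-regularity, Remark~\ref{LLrem}, or Proposition~\ref{period}.
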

\begin{proof}
We denote by $j$ the content of $\gamma$. By hypothesis, there exists a $\mathfrak{j}$-node  $\eta=(i',\lambda^{c'}_{i'},c')$ such that $\lambda^{c'}_{i'}=\lambda^c_i-1$. This must be a removable node otherwise the set of residues of the nodes in the vertical boundary of 
parts with length $\lambda_i^c-1$ is $\{0,1,\ldots,e-1\}$. 
\end{proof}

\begin{lemma}\label{l2}
Let  ${\bf s}=(s_1,\ldots,s_l)\in \mathcal{S}_e^l$ and let $\ulambda\in \Uglov{e}{\bf s}$. We assume that there exist $c\in \{1,\ldots,l\}$ and $i\in \mathbb{Z}_{>0}$ such that $\gamma_c:=(i,\lambda^c_i,c)$ is a removable $\mathfrak{j}$-node for some $\mathfrak{j}\in \mathbb{Z}/e\mathbb{Z}$. Assume that the set of residues of the nodes in the vertical boundary of 
parts of length $\lambda_i^c-1$ is $\{0,1,\ldots,e-1\}\setminus \{\mathfrak{j}-1\}$. Then  there is a sequence of consecutive extended $\mathfrak{j}$-nodes of the horizontal boundary between every $(1)$-connected removable node $\eta$ with $\gamma_c$ and the node $\gamma_c$. 
\end{lemma}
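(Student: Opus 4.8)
The plan is to make precise the two removable $\mathfrak{j}$-nodes in question and then produce, step by step, a chain of extended $\mathfrak{j}$-nodes in the horizontal boundary linking them via the $<_{co}$ relation. Write $\gamma_c=(i,\lambda^c_i,c)$ with content $j$, and let $\eta=(i',\lambda^{c'}_{i'},c')$ be a removable $\mathfrak{j}$-node $(1)$-connected with $\gamma_c$, so $\lambda^{c'}_{i'}=\lambda^c_i-1$ (its existence is guaranteed by Lemma \ref{l201}). The content of $\eta$ then differs from $j-1$ by a multiple of $e$; using Remark \ref{LLrem} (which applies since $\ulambda$ is FLOTW, hence satisfies condition $(1)$, and both nodes lie in the vertical boundary at heights $\lambda^c_i$ and $\lambda^c_i - 1$ and are removable) I would first pin down that this multiple is forced to be $0$, i.e. $\operatorname{cont}(\eta)=j-1$, so that $\eta<_{co}\gamma_c$ in the "wraparound" sense when $c'$ and $c$ are at the extreme components, and more generally $\eta$ and $\gamma_c$ are "one content-step and one component-wraparound" apart.

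Next I would apply Lemma \ref{ll1} to the string of equalities $\lambda^c_i-1=\lambda^{c'}_{i'}$ seen inside the relevant component, or rather I would reproduce its construction directly: for each component $m\neq c,c'$ I exhibit an extended $\mathfrak{j}$-node $\gamma_m$ of the horizontal boundary of $\lambda^m$, placed so that its content is $j$ if $m$ comes "after" $c$ in the cyclic order and $j-e$ if it comes "before", exactly as in the proof of Lemma \ref{ll1}. The point is that condition $(1)$ of FLOTW forces a node of the horizontal boundary with the prescribed content to exist in every component: the inequalities $\lambda^m_{i+s_m-s_c}\le\lambda^c_i$ and $\lambda^c_i-1\le \lambda^m_{\cdots}$ sandwich the relevant row length, and the hypothesis that the residue $\mathfrak{j}-1$ is \emph{absent} from the vertical boundary of parts of length $\lambda^c_i-1$ rules out the degenerate case where that horizontal-boundary node would actually be on the vertical boundary with the wrong residue (this is the same argument used in Lemma \ref{ll1} to exclude residue $\mathfrak{j}-1$). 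Concatenating, I obtain
$$\gamma_c >_{co} \gamma_{c+1} >_{co} \cdots >_{co} \gamma_l >_{co} \gamma_1 >_{co} \cdots >_{co} \widehat{\gamma_{c'}} >_{co} \cdots >_{co} \gamma_{c-1} >_{co} \eta,$$
where $\gamma_{c'}$ is replaced by $\eta$ itself; reading off the nodes strictly between $\gamma_c$ and $\eta$ gives the desired sequence of consecutive extended $\mathfrak{j}$-nodes of the horizontal boundary.

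The one subtlety, and what I expect to be the main obstacle, is the bookkeeping around the component $c'$ of $\eta$ and the verification that the chain really is \emph{consecutive} in the $<_{co}$ sense with no $\mathfrak{j}$-node of $\mathcal{E}_\mathfrak{j}(\ulambda)$ slipping in between two successive members. This is where the hypothesis on the residue set $\{0,\dots,e-1\}\setminus\{\mathfrak{j}-1\}$ of the length-$(\lambda^c_i-1)$ vertical boundary is used in full force: it simultaneously guarantees $\eta$ exists and is removable (Lemma \ref{l201}), and it prevents any competing $\mathfrak{j}$-node from appearing between $\gamma_c$ and $\eta$ at an intermediate component, since such a node would have to sit on a part of length $\lambda^c_i-1$ with residue $\mathfrak{j}$ — consistent — but then one checks via Lemma \ref{LL1} that its content would be forced to coincide with that of one of the $\gamma_m$'s already listed. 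I would handle the $c'<c$ versus $c'>c$ cases by the cyclic rotation trick of Remark \ref{fla}, exactly as in Lemmas \ref{LL1} and \ref{ll1}, so that without loss of generality $c'>c$ and all contents in play are $\ge j-e$ by minimality.
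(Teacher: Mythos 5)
Your overall strategy — build a $<_{co}$-chain of horizontal-boundary $\mathfrak{j}$-nodes through every component, using FLOTW condition~(1) to produce the intermediate nodes and the residue hypothesis to rule out bad cases, and treat $c'<c$ by cyclic rotation — matches the paper's. But there is a concrete error in how you set up the chain, and it infects the rest of the argument.

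You write that $\operatorname{cont}(\eta)$ ``differs from $j-1$ by a multiple of $e$'' and that Remark~\ref{LLrem} forces it to equal $j-1$. Both halves of this are wrong. The node $\eta$ supplied by Lemma~\ref{l201} is a removable $\mathfrak{j}$-node (same residue as $\gamma_c$), so its content is congruent to $j$ modulo~$e$, not to $j-1$; in particular $\operatorname{cont}(\eta)\ne j-1$ ever. Moreover Remark~\ref{LLrem} only compares nodes lying on parts of the \emph{same} length, whereas $\gamma_c$ sits on a part of length $\lambda^c_i$ and $\eta$ on a part of length $\lambda^c_i-1$, so the remark does not apply; and in any case no such ``forcing to a single value'' happens. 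What the paper actually does is bound $\operatorname{cont}(\eta)$ using $e$-regularity of $\lambda^c$ and FLOTW condition~(1), concluding $\operatorname{cont}(\eta)\in\{\,j,\ j-e\,\}$, and then it runs two genuinely different sub-arguments: for content $j$ it builds the chain directly through the intermediate components $c<c''<c'$ and uses the hypothesis that residue $\mathfrak{j}-1$ is absent on parts of length $\lambda^c_i-1$ to forbid the bad case $\lambda^c_i=\lambda^{c''}_{i+s_{c''}-s_c}+1$; for content $j-e$ it first deduces $\lambda^c_{i+e-1}=\lambda^c_i-1$, invokes Lemma~\ref{ll1} to reach the removable $\mathfrak{j}$-node $(i+e-1,\lambda^c_{i+e-1},c)$ of content $j-e$, and then repeats the content-$j$ argument from there. (The case $c'=c$ is dispatched at the outset directly from Lemma~\ref{ll1}, which your plan omits.) So you need to discard the ``content is $j-1$'' claim, replace the appeal to Remark~\ref{LLrem} by the regularity/FLOTW bound giving content $j$ or $j-e$, and add the two-case analysis; once you do, the component-by-component construction you sketch is the right one.
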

\begin{proof}

Let $j\in \mathbb{Z}$ be the content of $\gamma_c$. We set $\eta=(i',\lambda^{c'}_{i'},c')$. Assume first that $c'=c$ then we can use lemma \ref{ll1} to deduce the result. So  
let us assume  that $c'>c$. We have $\lambda_{i}^c-1\geq  \lambda_{i+1}^c> \lambda^c_{i+e}\geq \lambda^{c'}_{i+e+s_{c'}-s_c}$ because $\lambda^c$ is $e$-regular and because of the definition of the FLOTW $l$-partitions. We obtain $\lambda^{c'}_{i'}\geq \lambda^{c'}_{i+e+s_{c'}-s_c}+1$ from what we deduce $i'< i+e+s_{c'}-s_c$. Now we have $\lambda^{c'}_{i'}-i'+s_{c'}\geq \lambda^c_i-1-(i+e+s_{c'}-s_c-1)+s_{c'}$ thus the content of $\eta$ is $j$ or $j-e$.

Let us first assume that this content is $j$. We then have $\lambda^{c'}_{i'}-i'+s_{c'}=\lambda^c_i-1-i'+s_{c'}$ from what we deduce $i'=i+s_{c'}-s_c-1$. Assume now that we have an integer $c''$ such that $c<c''<c'$.
 We have $\lambda^{c''}_{i+s_{c''}-s_c-1}\geq \lambda^{c'}_{i+s_{c'}-s_c-1}$ and thus 
 $\lambda^{c''}_{i+s_{c''}-s_c-1}\geq \lambda^{c'}_{i'}$. Note that the content of the node
  $(i+s_{c''}-s_c-1,\lambda^{c'}_{i'},c'')$ is $j$ (if $i+s_{c''}-s_c-1=0$ then we can see it as an extended node on the virtual horizontal boundary).  Now, we also have 
  $\lambda^{c}_i\geq \lambda^{c''}_{i+s_{c''}-s_c}$ and we have three cases to consider:
\begin{itemize}
\item If  $\lambda^{c}_i= \lambda^{c''}_{i+s_{c''}-s_c}$ then the node $(i+s_{c''}-s_c,\lambda^c_i,c'')$ has content $j$ and it is removable as $\lambda^{c}_{i+1}\geq \lambda^{c''}_{i+s_{c''}-s_c+1}$ and $\gamma$ is removable thus it is on the horizontal boundary.
\item If $\lambda^{c}_i= \lambda^{c''}_{i+s_{c''}-s_c}+1$ the node 
$(i+s_{c''}-s_c,\lambda^{c''}_{i+s_{c''}-s_c},c'')$ has residue $\mathfrak{j}-1$ on a part with length $\lambda^c_i-1$ and this contradicts our hypotheses. 
\item Otherwise  $\lambda^{c'}_{i'}> \lambda^{c''}_{i+s_{c''}-s_c}$  and  $(i+s_{c''}-s_c-1,\lambda^{c'}_{i'},c'')$ is in the horizontal boundary. 
\end{itemize}

 Let us now assume that the content of $\eta$ is  
  $j-e$.  This means that we have $i'=i+e+s_{c'}-s_c-1$ and thus, by the first condition of FLOTW $l$-partition,we get $\lambda^c_{i+e-1}\geq \lambda^{c'}_{i'}$ and thus 
  $\lambda^c_{i+e-1}=\lambda^c_i-1$. 
  
   Now we can use Lemma \ref{ll1} to deduce that 
 there is a sequence of $l$ removable nodes of the horizontal boundary between $\gamma$ and 
  $(i+e-1,  \lambda^c_{i+e-1},c)$ which is of content $j-e$ (and removable). We can argue then exactly as above to see that 
   there is a sequence of  consecutive extended nodes of the horizontal boundary with content $j-e$ 
between   $(i+e-1,  \lambda^c_{i+e-1},c)$ and $\eta$. 

Finally, note that, as usual,  the case where $c'>c$ follows again from the above case together with Remark \ref{fla}.

\end{proof}

\begin{lemma}\label{l20}
Let  ${\bf s}=(s_1,\ldots,s_l)\in \mathcal{S}_e^l$ and let $\ulambda\in \Uglov{e}{\bf s}$. We assume that there exist $c\in \{1,\ldots,l\}$ and $i\in \mathbb{Z}_{>0}$ such that $\gamma_c:=(i,\lambda^c_i,c)$ is a removable $\mathfrak{j}$-node for some $\mathfrak{j}\in \mathbb{Z}/e\mathbb{Z}$. Let $c'\in \{1,\ldots,l\}$ with $c'\neq c$ and set $j=i+s_{c'}-s_c$ if $c'>c$ and $j=i+s_{c'}-s_c+e$ if $c'<c$. Assume that 
 $\lambda^c_i=\lambda^{c'}_j$ then $\gamma_{c'}=(j,\lambda^{c'}_j,c')$ is a removable $\mathfrak{j}$-node. 
 A removable node satisfying this condition is said to be $(2)$-connected with $\gamma_{c'}$. If $\gamma_{c'}$ is such a a node then  
there exists a series of consecutive removable $\mathfrak{j}$-nodes between $\gamma_c$ and $\gamma_{c'}$ which are $(2)$-connected one with each other. 
   
\end{lemma}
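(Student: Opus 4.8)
The plan is to follow exactly the template already set by Lemmas \ref{l201} and \ref{l2}. The first assertion, that $\gamma_{c'}=(j,\lambda^{c'}_j,c')$ is a removable $\mathfrak{j}$-node when $\lambda^c_i=\lambda^{c'}_j$, is a short computation: the content of $(j,\lambda^{c'}_j,c')$ is $\lambda^{c'}_j - j + s_{c'}$, and plugging in $j = i+s_{c'}-s_c$ (case $c'>c$) gives $\lambda^c_i - i + s_c = \operatorname{cont}(\gamma_c)$, so the residue is indeed $\mathfrak{j}$; the case $c'<c$ gives content $\operatorname{cont}(\gamma_c)-e$, hence the same residue. Removability of $\gamma_{c'}$ follows because $\gamma_c$ is removable ($\lambda^c_{i+1}<\lambda^c_i$) and condition $(1)$ of Proposition \ref{flotwdef} forces $\lambda^{c'}_{j+1}\leq\lambda^{c}_{i+1}<\lambda^c_i=\lambda^{c'}_j$ (in the $c'>c$ case; the $c'<c$ case is handled via the cyclic rotation of Remark \ref{fla}, as in the other lemmas).

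For the second assertion — the existence of a chain of consecutive removable $\mathfrak{j}$-nodes $(2)$-connected to one another running from $\gamma_c$ to $\gamma_{c'}$ — I would argue by induction on the ``gap'' between $c$ and $c'$ in the cyclic order of components, reducing to the case where $c$ and $c'$ are adjacent. Concretely, suppose $c<c''<c'$ (the other arrangement being handled by Remark \ref{fla}). Since $\lambda^c_i=\lambda^{c'}_j$ and condition $(1)$ gives the sandwich $\lambda^{c}_i \geq \lambda^{c''}_{i+s_{c''}-s_c} \geq \lambda^{c'}_{j}$ with $j=i+s_{c'}-s_c$, we get $\lambda^{c''}_{i+s_{c''}-s_c}=\lambda^c_i$, so by the first part of the lemma the node $(i+s_{c''}-s_c,\lambda^{c''}_{i+s_{c''}-s_c},c'')$ is a removable $\mathfrak{j}$-node which is $(2)$-connected to both $\gamma_c$ and $\gamma_{c'}$. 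Iterating this over all intermediate components produces the full chain; ``consecutive'' here refers to the order $<_{co}$, and one checks using Lemma \ref{L1} (the residues and parts being equal) that no extended $\mathfrak{j}$-node lies strictly between two successive members of the chain, exactly as in the proof of Lemma \ref{ll1}.

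The main obstacle I anticipate is not the equality-of-contents bookkeeping but verifying the \emph{consecutiveness} claim: one must rule out the existence of an intervening extended $\mathfrak{j}$-node (in the boundary) between two adjacent removable nodes $(i+s_{c''}-s_c,\lambda^c_i,c'')$ of the chain. The key tool is Lemma \ref{L1}, which says that a strictly smaller part forces a strictly smaller node in the order $<$; combined with the fact that, by condition $(2)$ of FLOTW, the residue $\mathfrak{j}-1$ cannot occur on the vertical boundary of a part of length $\lambda^c_i$ right before $\gamma_c$, this pins down precisely which extended $\mathfrak{j}$-nodes can appear between the two components and shows the jump is exactly one step of $<_{co}$. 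The case-split according to whether the ``candidate'' node in $\lambda^{c''}$ has part-length $\lambda^c_i$ or $\lambda^c_i-1$ — mirroring the three-case analysis in Lemma \ref{l2} — is where the argument has to be run carefully, but it is routine given the earlier lemmas.
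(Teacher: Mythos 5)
Your proof is correct and follows essentially the same route as the paper: a content computation plus FLOTW condition (1) applied at row $i+1$ gives that $\gamma_{c'}$ is a removable $\mathfrak{j}$-node, and the inequality sandwich $\lambda^c_i\geq\lambda^{c''}_{i+s_{c''}-s_c}\geq\lambda^{c'}_j=\lambda^c_i$ forces equality at every intermediate component $c''$, producing the chain of mutually $(2)$-connected removable nodes (with the case $c'<c$ reduced to $c'>c$ via Remark \ref{fla}). The one place where you work harder than necessary is the consecutiveness check at the end: since every pair (content, component) corresponds to exactly one element of $\mathcal{E}_{\mathfrak{j}}(\ulambda)$, two removable nodes of your chain lying in adjacent components with the same content are automatically $<_{co}$-consecutive, so the appeal to Lemma \ref{L1} and FLOTW condition $(2)$ is not needed here --- that machinery is what is required in Lemma \ref{l2}, where the connected nodes do \emph{not} share a content.
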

\begin{proof}
 Let $c\in \{1,\ldots,l\}$ and let $i\in \mathbb{Z}_{>0}$. Let us first assume that $c'>c$. By hypothesis, we have $\lambda^c_i\geq \lambda^{c'}_{i+s_{c'}-s_c}$. Assume  that
  $\gamma_c=(i,\lambda^c_i,c)$ is a removable node   and $\lambda^c_i= \lambda^{c'}_{i+s_{c'}-s_c}$
then the node $\eta=(i+s_{c+1}-s_c,  \lambda^{c+1}_{i+s_{c+1}-s_c},c+1)$ must be removable (otherwise we would have
$\lambda^c_{i+1}< \lambda^{c+1}_{i+s_{c+1}-s_c+1}$) 
and  it has the same residue as $\gamma_c$. Now if $c<c''<c'$, then we must have $\lambda^{c}_i= \lambda^{c''}_{i+s_{c''}-s_{c'}}$. The $\mathfrak{j}$-node  $\gamma=(i+s_{c''}-s_c,\lambda^c_{i+s_{c''}-s_c},c'')$ 
is thus removable and $(2)$-connected to $\gamma_c$, the result follows. 
If $c'<c$ we again conclude using the above reasoning together with Remark \ref{fla}. 

\end{proof}

\subsection{Nature of a node}

We here define the nature of a node in the extended Young diagram and then finish the subsection with a proposition which shows the existence of a staggered sequence for every FLOTW $l$-partitions.

\begin{abs}\label{exte}
In this paragraph,  we go back to the general case for the moment with ${\bf s}\in \mathbb{Z}^l$.

 Let us assume that $\ulambda \in \Uglov{e}{\bf s}$ and let $\mathfrak{j}\in \mathbb{Z}/e\mathbb{Z}$. Let $j\in \mathbb{Z}$ in the class of $\mathfrak{j}$ module $e\mathbb{Z}$. We have already remarked that, given a component $\lambda^{c}$ of $\ulambda$, there is a unique extended node $\gamma$ with content $j$ which is addable  or on the boundary of $\ulambda$. Recall that we denote in this case $\gamma\in\mathcal{E}_{\mathfrak{j}} (\ulambda)$. Let $v_{\mathfrak{j}}(\ulambda)$ be the (infinite) word obtained by reading the element in $\mathcal{E}_{\mathfrak{j}} (\ulambda)$ with residue $\mathfrak{j}$ in increasing order with respect to $>$ and by writing:
\begin{itemize}
\item $A$ if the associated extended node is addable,
\item $R$ if the associated extended node is removable
\item $B_{hor}$ if the associated extended node is in the horizontal boundary of $\ulambda$ without being removable,
\item $B_{ver}$ if the associated extended node is in the vertical boundary without being removable.  
\end{itemize}
Each (addable or extended) node is indeed in one of the above category which is called the {\it nature} of the (addable or extended)   node.  
We can note that consecutive tletters  of this word are associated to  elements of $\mathcal{E}_{\mathfrak{j}} (\ulambda)$ which are consecutive with respect to $<$.  
In addition,  the word $v_{\mathfrak{j}}(\ulambda)$  is always of the form 
$Y_1YY_2$ where $Y_1$ is an infinite  sequence of $B_{ver}$, $Y_2$ an infinite sequence of $B_{hor}$, 
 and $Y$ a finite sequence.  Note also that if we remove all the (addable or extended)  nodes of nature 
$B_{ver}$ or $B_{hor}$ in this sequence, we obtain the word $w_{\mathfrak{j}}(\ulambda)$ (defined in \S \ref{ugsec}) which is thus a subword of $v_{\mathfrak{j}}(\ulambda)$. The word 
$\widetilde{w}_{\mathfrak{j}}(\ulambda)$ is also a subword for $v_{\mathfrak{j}}(\ulambda)$.

\begin{exa}
For example take $e=4$ ans ${\bf s}=(0,0,2,1,0)$ and the $5$-partition
 $\ulambda=(2,22,2,3.1,\emptyset)$:
 
 \vspace{0,5cm}

  \centerline{\small{
 \Bigg(\  \ \ \ \ytableausetup
{mathmode}\begin{ytableau}
\none  &  \none  & \none  & 3  &  4&  \none[\dots] \\
\none  &  { 0} & { 1}  \\
\overline{2}  \\
\overline{3}  \\  
\none[\vdots] \\
 \none[\vdots] \\
\end{ytableau},
\begin{ytableau}
\none  &  \none  & \none  & 3  &  4&  \none[\dots] \\
\none  &  { 0} & { 1}  \\
\none & \overline{1} & 0  \\
\overline{3}  \\  
\none[\vdots] \\
 \none[\vdots] \\
\end{ytableau},
\begin{ytableau}
\none  &  \none  &\none   & 5  &    \none[\dots] \\
\none  &  { 2} & { 3}  \\
0    \\
\overline{1}  \\
\overline{2}  \\
\overline{3}  \\  
 \none[\vdots] \\
\end{ytableau},
\begin{ytableau}
\none  &  \none  &  \none &\none & 5  &   \none[\dots] \\
\none  & 1&  { 2} & { 3}  \\
\none & 0  \\  
\overline{2}  \\
\overline{3}  \\
 \none[\vdots] \\
\end{ytableau}, 
\begin{ytableau}
\none  &  1&2  &  3 &4 &    \none[\dots] \\
\overline{1}    \\ 
\overline{2}  \\
\overline{3}  \\
 \none[\vdots] \\
\end{ytableau}, 
 \Bigg)}}

 \noindent One can see that 
 $$v_1(\ulambda)=Y_1  B_{hor}AAB_{ver}R.Y_2.$$

\end{exa}
\end{abs}

\begin{Prop}\label{princ0}
Let  ${\bf s}=(s_1,\ldots,s_l)\in \mathcal{S}_e^l$ and assume that $\ulambda\in\Uglov{e}{\bf s}$.
 Let $\mathfrak{j}\in \mathbb{Z}/e\mathbb{Z}$. 
 Assume that $v_{\mathfrak{j}} (\ulambda)=X_1X_2$ where $X_1$ and $X_2$ are two words in 
 $\{A,R,B_{ver},B_{hor}\}$ satisfying the following condition: no node associated to a letter $R$ 
  in $X_1$ is $(1)$ or $(2)$-connected with a node   associated to a letter $R$ 
  in $X_2$.  Then, 
  if we remove all the removable  $\mathfrak{j}$-nodes associated to a letter $R$ in $X_1$,  the resulting $l$-partition is in $\Uglov{e}{\bf s}$.

\end{Prop}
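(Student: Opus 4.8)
The goal is to show that removing from $\ulambda \in \Uglov{e}{\bf s}$ the removable $\mathfrak{j}$-nodes coming from the $R$-letters in the left factor $X_1$ of a split $v_{\mathfrak{j}}(\ulambda) = X_1 X_2$ — where no $R$ in $X_1$ is $(1)$- or $(2)$-connected to an $R$ in $X_2$ — yields again an Uglov $l$-partition. The natural tool is the crystal-theoretic characterization of $\Uglov{e}{\bf s}$: an $l$-partition lies in $\Uglov{e}{\bf s}$ iff it can be reduced to $\uemptyset$ by successively removing good removable nodes, equivalently (by the theory of crystals of $\mathcal U_q(\widehat{\mathfrak{sl}}_e)$-modules) iff each of its $\te_{\mathfrak j}$-strings behaves correctly. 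Concretely I would work with the word $\widetilde w_{\mathfrak j}(\ulambda) = A^p R^q$ obtained from $v_{\mathfrak j}(\ulambda)$ (equivalently from $w_{\mathfrak j}(\ulambda)$) by cancelling all $RA$ factors: the removable nodes surviving in $\widetilde w_{\mathfrak j}$ are exactly the normal removable $\mathfrak j$-nodes, and removing any one of them stays inside $\Uglov{e}{\bf s}$ (this is the standard fact that $\te_{\mathfrak j}$ applied $q$ times lands on another crystal vertex; in the notation of the paper it follows from iterating Definition \ref{uglov} together with the Ariki--Mathas/FLOTW description). So it suffices to show: the removable nodes attached to $R$'s of $X_1$ are all normal, and after deleting them the word $v_{\mathfrak j}$ of the new partition still has its surviving part well-behaved, so that we may continue.

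First I would record the combinatorial meaning of ``$(1)$- or $(2)$-connected.'' By Lemmas \ref{l201}, \ref{l2} and \ref{l20}, if a removable $\mathfrak j$-node $\gamma_c$ and another removable $\mathfrak j$-node $\eta$ are $(1)$- or $(2)$-connected, then in the $<$-ordering of $\mathcal E_{\mathfrak j}(\ulambda)$ the entire stretch of $\mathcal E_{\mathfrak j}$-nodes strictly between them consists only of extended nodes of the \emph{horizontal} boundary, i.e.\ letters $B_{hor}$ (for $(1)$-connection, going down in content through the vertical boundary of parts of length $\lambda^c_i-1$; for $(2)$-connection, a chain of consecutive removable nodes in intermediate components). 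Hence ``no $R$ in $X_1$ is connected to an $R$ in $X_2$'' says precisely that the cut between $X_1$ and $X_2$ does not fall in the interior of any such ``$R \, B_{hor}^\ast \, R$'' block — equivalently, when we pass from $v_{\mathfrak j}(\ulambda)$ to $w_{\mathfrak j}(\ulambda)$ by deleting all $B_{ver}$ and $B_{hor}$ letters, the images of $X_1$ and $X_2$ give an honest factorization $w_{\mathfrak j}(\ulambda) = u_1 u_2$ in which no cancellation of an $RA$ factor in $\widetilde w_{\mathfrak j}$ straddles the cut point between $u_1$ and $u_2$. (Here I would use that an $R$ adjacent on the right to an $A$ in $w_{\mathfrak j}$ forces, via the FLOTW axioms and Lemma \ref{L1}, the two nodes to sit on parts differing by exactly one in length, whence $(1)$- or $(2)$-connection — this is the place where the hypothesis bites.)

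Given this, the argument is a clean induction on the number of $R$'s in $X_1$. If $X_1$ has no $R$ there is nothing to remove. Otherwise, because the $X_1/X_2$ cut respects the $RA$-cancellation structure, the word $\widetilde w_{\mathfrak j}(\ulambda) = A^p R^q$ has at least one of its surviving $R$'s coming from $X_1$, and in fact the \emph{leftmost} surviving $R$ (the good removable $\mathfrak j$-node) either lies in $X_1$ or can be replaced, after an initial block of $(1)/(2)$-connected removals within $X_1$ that do not change membership in $\Uglov{e}{\bf s}$, by one that does. Concretely: pick the good removable $\mathfrak j$-node $\gamma$; by the non-straddling property it is attached to a letter of $X_1$; remove it; by Definition \ref{uglov} the result $\ulambda'$ is again in $\Uglov{e}{\bf s}$; and one checks that $v_{\mathfrak j}(\ulambda')$ inherits a factorization $X_1' X_2$ with $X_2$ unchanged and $X_1'$ still containing no $R$ connected to an $R$ of $X_2$ (removing a good node only affects the word locally, turning the chosen $R$ into an $A$ and possibly merging neighbours, none of which creates a new cross-connection). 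Repeat. After finitely many steps $X_1'$ has no $R$ and we have removed exactly the removable nodes attached to $R$'s of the original $X_1$, as required.

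**Main obstacle.** The delicate point is the bookkeeping in the induction step: verifying that after removing one good $\mathfrak j$-node the factorization hypothesis is genuinely preserved — i.e.\ that the removal does not turn a previously harmless pair of $R$'s into a $(1)$- or $(2)$-connected pair straddling the cut, and that the good node is always on the $X_1$ side. This requires carefully tracking how $v_{\mathfrak j}$ changes under a single node removal (only the letter at $\gamma$ and its immediate $<$-neighbours in $\mathcal E_{\mathfrak j}$ are affected, by the locality of the boundary near a removable corner), and invoking Lemmas \ref{l201}--\ref{l20} and \ref{ll1} to see that the combinatorics of connectedness is controlled by part-lengths, which change in a predictable way. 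Everything else — the reduction to $\widetilde w_{\mathfrak j}$, the passage from $v_{\mathfrak j}$ to $w_{\mathfrak j}$, and the base case — is routine given the earlier results.
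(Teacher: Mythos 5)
The paper's proof is a one-line appeal to the non-recursive FLOTW description: one checks conditions $(1)$ and $(2)$ of Proposition \ref{flotwdef} directly, which is what the $(1)$- and $(2)$-connectedness notions were built for. Your approach instead routes through the recursive crystal description of $\Uglov{e}{\bf s}$ via good removable nodes, and this is where the argument breaks.

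The crucial false claim is that the $R$-letters of $X_1$ contain a \emph{normal} removable $\mathfrak{j}$-node, so that the good-node mechanism can begin. Observing that no $RA$-cancellation straddles the cut is not enough: an $R$ in $X_1$ can perfectly well be cancelled by an $A$ that is \emph{also} in $X_1$, and then $X_1$ contributes nothing to $\widetilde{w}_{\mathfrak{j}}(\ulambda)$. A concrete instance is $l=1$, $e=2$, ${\bf s}=(0)$, $\ulambda=(3,1)$, $\mathfrak{j}=1$. Here $v_1(\ulambda)=\cdots B_{ver}\,R\,B_{hor}\,A\,B_{hor}\cdots$ with the $R$ at $(2,1,1)$ (content $-1$) and the $A$ at $(1,4,1)$ (content $3$); thus $w_1=RA$ and $\widetilde{w}_1=\emptyset$, so there is \emph{no} normal removable $1$-node at all. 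Taking the cut after the $A$ puts the lone $R$ in $X_1$, $X_2$ contains no $R$, and the connectedness hypothesis is vacuously satisfied. The proposition correctly predicts that removing $(2,1,1)$ gives $(3)$, which is $2$-regular, but your induction cannot even start: there is no good $\mathfrak{j}$-node to remove. Your fallback phrase, that the good node ``can be replaced, after an initial block of $(1)/(2)$-connected removals within $X_1$ that do not change membership in $\Uglov{e}{\bf s}$, by one that does,'' is precisely the statement being proved and so renders the argument circular. The intended proof does not pass through the crystal recursion at all; it is a static verification, using Lemmas \ref{L1}, \ref{l201}, \ref{l2}, \ref{l20}, that deleting the $X_1$-removables cannot create a violation of the FLOTW inequalities in $(1)$ (guarded by the absence of a $(2)$-connected $R$ in $X_2$) nor of the residue-gap condition in $(2)$ (guarded by the absence of a $(1)$-connected $R$ in $X_2$).
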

\begin{proof}
This directly follows from the definition of FLOTW $l$-partitions.

\end{proof}

\begin{Prop}\label{exif}
Let  ${\bf s}=(s_1,\ldots,s_l)\in \mathcal{S}_e^l$ and assume that $\ulambda\in\Uglov{e}{\bf s}$. Then there exists $\mathfrak{j}\in \mathbb{Z}/e\mathbb{Z}$ and  a removable $\mathfrak{j}$-node of $\ulambda$ such that all the  elements of $\mathcal{E}_{\mathfrak{j}} (\ulambda)$ greater than it are virtual nodes of the horizontal boundary. 

\end{Prop}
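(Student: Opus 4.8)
The plan is to produce, for the given FLOTW $l$-partition $\ulambda$, a residue $\mathfrak{j}$ whose largest nontrivial extended $\mathfrak{j}$-node is removable, in the sense that everything strictly above it in $\mathcal{E}_{\mathfrak{j}}(\ulambda)$ is a virtual node of the horizontal boundary. First I would recall the structure of $v_{\mathfrak{j}}(\ulambda)$ from \S\ref{exte}: it has the shape $Y_1 Y Y_2$ with $Y_1$ an infinite string of $B_{ver}$, $Y_2$ an infinite string of $B_{hor}$, and $Y$ finite; so the statement is equivalent to asking that the last letter of $Y$ (the rightmost letter of $v_{\mathfrak{j}}(\ulambda)$ that is not $B_{hor}$) be an $R$. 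The natural candidate for $\mathfrak{j}$ is obtained by looking at the largest part appearing in any component of $\ulambda$, or more precisely at a node that is maximal for the order $<$ among all removable nodes of $\ulambda$ (such a node exists since $\ulambda\neq\uemptyset$ — if $\ulambda=\uemptyset$ the statement fails as stated, but then there is nothing to prove in the intended application, or one interprets the claim vacuously; I would handle $\ulambda=\uemptyset$ separately or note it is excluded).

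Concretely, I would choose a removable node $\gamma_0=(a_0,b_0,c_0)$ of $\ulambda$ which is maximal with respect to $<$ among all removable nodes of $\ulambda$, and set $\mathfrak{j}=\operatorname{res}(\gamma_0)$. The claim to prove is then: every element of $\mathcal{E}_{\mathfrak{j}}(\ulambda)$ strictly greater than $\gamma_0$ is a virtual node of the horizontal boundary. Suppose not; then there is $\eta=(a,b,c)\in\mathcal{E}_{\mathfrak{j}}(\ulambda)$ with $\eta>\gamma_0$ which is either addable, or a genuine (non-virtual) node of the boundary, or a virtual node of the vertical boundary. The vertical-boundary virtual-node case and the addable-node case I would rule out using Lemma \ref{L1} and Lemma \ref{LL1}: for a virtual node of the vertical boundary of component $c$ the relevant part length is $0$, which is minimal, so by Lemma \ref{L1} it cannot exceed the removable node $\gamma_0$ (whose part length $b_0$ is $\geq 1$) unless forced by the content tie-break, and Lemma \ref{LL1} controls exactly that content comparison; similarly an addable $\mathfrak{j}$-node sits on a part of length $0$ in its row (it is being added) so the same argument applies. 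The remaining and genuinely delicate case is when $\eta$ is a non-virtual node of the boundary with $\eta>\gamma_0$: since $\gamma_0$ is maximal among removable nodes, $\eta$ lies in the horizontal boundary without being removable, and I want to derive a contradiction with the FLOTW conditions, i.e. produce a removable node above $\gamma_0$.

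The main obstacle is this last step: upgrading "there is a boundary $\mathfrak{j}$-node above $\gamma_0$" to "there is a \emph{removable} $\mathfrak{j}$-node above $\gamma_0$", contradicting maximality. The tool for this is precisely Lemma \ref{ll1} together with Lemma \ref{l2}/\ref{l20} and the periodicity Proposition \ref{period}: if a non-removable horizontal-boundary $\mathfrak{j}$-node $\eta$ sits on a part of length $k$, I would examine the parts of length $k$ and $k-1$; using the $e$-regularity of each component (the parts of $\lambda^c$ never repeat $e$ times) and the first FLOTW inequality, one can always slide to a removable node on a part of the same length, and Lemma \ref{ll1} shows such removable nodes of residue $\mathfrak{j}$ do occur whenever the residue-omission pattern forces it. I would therefore argue: choose among all removable nodes of $\ulambda$ one whose associated part length $k$ is maximal, and among those the one maximal for $<$; call it $\gamma_0$. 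Then any $\mathfrak{j}$-node above it in $\mathcal{E}_{\mathfrak{j}}(\ulambda)$ must sit on a part of length $\leq k$, and by Lemma \ref{L1} on a part of length $<k$ is impossible for something $>\gamma_0$ of the same length once the content bookkeeping of Lemma \ref{LL1}/Remark \ref{LLrem} is taken into account; a node on a part of length $k$ that is $>\gamma_0$ and non-removable would, via Lemma \ref{ll1}, force another removable $\mathfrak{j}$-node on a part of length $k$ also $>\gamma_0$, contradicting the maximality of $\gamma_0$ in the order among length-$k$ removable nodes. This pins down that above $\gamma_0$ only $B_{hor}$ virtual nodes survive, which is the assertion; I would finish by double-checking the boundary cases ($b_0=1$, i.e.\ $\gamma_0$ on a part of length one, where "part of length $k-1=0$" becomes the virtual vertical boundary) where Lemma \ref{l201} and the displayed chain in Lemma \ref{ll1} give the required control directly.
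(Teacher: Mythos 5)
Your choice of $\gamma_0$ is not strong enough, and the argument you give to exclude addable $\mathfrak{j}$-nodes above it breaks down. You choose $\gamma_0$ as a removable node on a part of maximal length, and among those the one maximal for $<$, and you set $\mathfrak{j}=\operatorname{res}(\gamma_0)$. The problem is that there can then be a node of residue $\mathfrak{j}-1$ in the vertical boundary sitting on a part of the \emph{same} maximal length; stacking a box on top of it produces an addable $\mathfrak{j}$-node that is $>\gamma_0$. Concretely, take $e=3$, $l=2$, ${\bf s}=(0,1)$, $\ulambda=\bigl((2),(2)\bigr)$, which is FLOTW. The removable nodes are $(1,2,1)$ (content $1$, residue $1$) and $(1,2,2)$ (content $2$, residue $2$), both on a part of maximal length $2$, and the one of larger content is $(1,2,2)$, so your recipe gives $\gamma_0=(1,2,2)$ and $\mathfrak{j}=2$. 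But $(1,3,1)$ is an addable $2$-node of content $2$, and since its component index is $1<2$ it satisfies $(1,3,1)>\gamma_0$, so the conclusion fails for $\mathfrak{j}=2$. (The proposition does hold, but with $\mathfrak{j}=1$ and $\gamma=(1,2,1)$: the $\mathcal{E}_1(\ulambda)$-elements above it are only the virtual horizontal ones.)

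The step where you try to rule this out is the assertion that ``an addable $\mathfrak{j}$-node sits on a part of length $0$ in its row,'' which is simply false: an addable node $(a,b,c)$ sits at the end of a row of current length $b-1$, which can be as large as the maximal part. Lemma \ref{L1} and Lemma \ref{LL1} therefore do not give what you want here, and indeed in the example above they correctly \emph{allow} the addable node of content $2$ to lie above $\gamma_0$. What the paper actually uses — and what you are missing — is a stronger statement from \cite[Lemma 5.7.13]{GJ}: for a FLOTW $\ulambda$ there exists a removable $\mathfrak{j}$-node $\gamma=(a,b,c)$ with $\lambda^c_a$ maximal \emph{and} such that no $(\mathfrak{j}-1)$-node of the vertical boundary sits on a part equal to $\lambda^c_a$. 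With that extra condition, Lemma \ref{L1} forces any addable $\mathfrak{j}$-node $\eta=(a'',b'',c'')$ above $\gamma$ to satisfy $b''=\lambda^c_a+1$, so $(a'',b''-1,c'')$ would be a $(\mathfrak{j}-1)$-node on a maximal-length part of the vertical boundary, a contradiction; and similarly a non-removable vertical boundary node above $\gamma$ yields a $(\mathfrak{j}-1)$-node one row below on a maximal-length part, which is excluded. After this, the remaining case (horizontal boundary, non-removable, above $\gamma$) is seen to be virtual because the part $\lambda^c_a$ is maximal. None of the machinery you invoke (Lemma \ref{ll1}, Lemma \ref{l2}, Lemma \ref{l20}, Proposition \ref{period}) is needed, but the cited lemma from \cite{GJ} is essential, and your weaker maximality choice cannot replace it.
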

\begin{proof}
 Let $\ulambda\in\Uglov{e}{\bf s}$. By \cite[Lemma 5.7.13]{GJ},  there exist a removable $\mathfrak{j}$-node $\gamma=(a,b,c)$ such that the part $\lambda_a^c$ is the greater part of $\ulambda$ and such that if $\gamma'=(a',b',c')$  is a $\mathfrak{j}-1$-node in the vertical boundary of $\ulambda$ then $\lambda_a^c>\lambda_{a'}^{c'}$. We can also assume that $\gamma$ is a maximal $\mathfrak{j}$-node  toward the nodes with this property. 

By hypothesis, there cannot exist an addable $\mathfrak{j}$-node $\eta=(a'',b'',c'')$ such that $\eta>\gamma$ otherwise  
 we would have $\lambda_{a''}^{b''}=\lambda_a^c+1$  by Lemma \ref{L1}, and thus $\eta=(a'',b''-1,c'')$ would be a $\mathfrak{j}-1$ node on a part equal to $\lambda_a^c$. Thus, if  $\gamma_1\in  \mathcal{E}_{\mathfrak{j}} (\ulambda)$ is such that $\gamma_1> \gamma$ then $\gamma$ must   be on the boundary of $\ulambda$.

  If it is on the vertical boundary without being removable then we have again $\lambda_a^c=\lambda_{a_1}^{c_1}$ and we have a node with residue $\mathfrak{j}-1$ on a part equal to $\lambda_a^c$.
 It  is not removable by the maximality of $\gamma$. Thus, it is on the horizontal boundary and as  the part 
  $\lambda^c_a$ is maximal, it must be virtual. 
 As a conclusion, there exists a node of nature $R$
 such that all the  extended or addable $\mathfrak{j}$-nodes greater than it are virtual nodes of the horizontal boundary.

\end{proof}

\subsection{Proof of the main theorem in the case of FLOTW multipartitions}

The main result in the case of FLOTW $l$-partitions follows now quite easily from the above study. Let us start with the direct implication. Note that we will only need Proposition \ref{convA} below in the sequel for the proof of our general theorem. However,   a proof of the theorem for FLOTW multipartitions gives a good hint of how   things will work in general. So we give a full proof of it.  

\begin{Prop}\label{princ}
Let  ${\bf s}=(s_1,\ldots,s_l)\in \mathcal{S}_e^l$ and assume that $\ulambda\in\Uglov{e}{\bf s}$ then it is a staggered $l$-partition with respect to $(e,{\bf s})$.

\end{Prop}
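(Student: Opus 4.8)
The plan is to proceed by induction on the rank $n$ of $\ulambda$, the case $\ulambda=\uemptyset$ being trivial. So assume $\ulambda\in\Uglov{e}{\bf s}$ has rank $n>0$ and that every FLOTW $l$-partition of smaller rank is staggered. The first task is to produce a staggered sequence of $\mathfrak{j}$-nodes for $\ulambda$ for a suitable $\mathfrak{j}$. Here I would invoke Proposition \ref{exif}: there exist $\mathfrak{j}\in\mathbb{Z}/e\mathbb{Z}$ and a removable $\mathfrak{j}$-node of $\ulambda$ such that every element of $\mathcal{E}_{\mathfrak{j}}(\ulambda)$ greater than it is a virtual node of the horizontal boundary. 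Fix this $\mathfrak{j}$ and consider the word $v_{\mathfrak{j}}(\ulambda)=Y_1 Y Y_2$ from \S\ref{exte}. The candidate staggered sequence $(\gamma_i)_{i\in\mathbb{Z}_{>0}}$ should be built by starting at a well-chosen position in the word and then following $<_{co}$ upward through all of $\mathcal{E}_{\mathfrak{j}}(\ulambda)$; by construction this is an infinite increasing sequence in $\mathcal{E}_{\mathfrak{j}}(\ulambda)$, so the first bullet of the definition of staggered sequence is automatic, and Proposition \ref{exif} guarantees that $Y_2$ is (eventually) all $B_{hor}$, so the sequence does contain a removable node (the one from Proposition \ref{exif}) and stabilizes correctly at the top.

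The delicate point is to choose the \emph{starting} node $\gamma_1$ so that the third bullet of the definition of staggered sequence holds: every $\mathfrak{j}$-node of the boundary strictly below $\gamma_1$ must be a virtual node of the vertical boundary, or else there is an addable $\mathfrak{j}$-node $\gamma$ with $\gamma<_{co}\gamma_1$. Reading $v_{\mathfrak{j}}(\ulambda)=Y_1 Y Y_2$, the word $Y_1$ is an infinite run of $B_{ver}$ (virtual vertical boundary nodes, which are allowed below $\gamma_1$), so I want $\gamma_1$ to be chosen as the node sitting just after the last letter of $Y_1$, i.e. the smallest element of $\mathcal{E}_{\mathfrak{j}}(\ulambda)$ that is not a virtual vertical boundary node. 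If that node is an $A$, I instead need to shift: the definition allows $\gamma_1$ to be the node immediately above an addable node, so I take $\gamma_1$ to be the first non-$B_{ver}$, non-$A$ node. One then checks that with this choice, everything strictly below $\gamma_1$ is either $B_{ver}$ or is an $A$ with $\gamma<_{co}\gamma_1$ — here I would need the combinatorial fact that in $v_{\mathfrak{j}}(\ulambda)$ the run of $B_{ver}$'s and any $A$'s occurring at the bottom are contiguous and sit below the first $R$ or $B_{hor}$; this is exactly the structure of the word recorded in \S\ref{exte} together with Lemma \ref{L1}.

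Next comes the recursive step: after removing the removable $\mathfrak{j}$-nodes of the staggered sequence, the resulting $l$-partition $\ulambda'$ must still be a staggered $l$-partition, and by the induction hypothesis it suffices to show $\ulambda'\in\Uglov{e}{\bf s}$, i.e. that $\ulambda'$ is again FLOTW. This is where Proposition \ref{princ0} enters: writing $v_{\mathfrak{j}}(\ulambda)=X_1 X_2$ with $X_1$ the initial segment up to and including the last removable node of the staggered sequence and $X_2$ the rest, I must verify that no $R$ in $X_1$ is $(1)$- or $(2)$-connected to an $R$ in $X_2$. By Proposition \ref{exif} the nodes $X_2$ are all virtual $B_{hor}$ nodes — in particular $X_2$ contains \emph{no} $R$ at all — so the hypothesis of Proposition \ref{princ0} is vacuously satisfied, and hence $\ulambda'\in\Uglov{e}{\bf s}$. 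Then the induction hypothesis applies to $\ulambda'$, giving that $\ulambda'$ is staggered, and therefore $\ulambda$ is staggered by definition.

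The main obstacle, as I see it, is not the recursion (which is clean once Propositions \ref{exif} and \ref{princ0} are in hand) but the precise verification that the proposed sequence $(\gamma_i)$ genuinely satisfies all three bullets of the definition of a staggered sequence — especially matching the "$\gamma<_{co}\gamma_1$ for an addable node $\gamma$" clause with the actual shape $Y_1 Y Y_2$ of the word $v_{\mathfrak{j}}(\ulambda)$, and confirming that consecutivity with respect to $<_{co}$ is preserved as the sequence passes between components (the two cases $c_1=c_2+1$ and $\operatorname{cont}(\gamma_1)+e=\operatorname{cont}(\gamma_2),\ c_1=1,\ c_2=l$ in \S\ref{refor}). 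Lemmas \ref{L1}, \ref{LL1}, \ref{ll1}, \ref{l201}, \ref{l2} and \ref{l20} were built precisely to control how removable nodes and horizontal-boundary nodes interleave across components, so I expect the verification to amount to assembling these lemmas; the bookkeeping is the real work.
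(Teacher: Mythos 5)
Your setup and use of Proposition \ref{exif} to produce the staggered sequence is broadly in line with the paper, but the recursive step contains a genuine error. You decompose $v_{\mathfrak{j}}(\ulambda)=X_1X_2$ with $X_1$ being the initial segment up to and including the \emph{last} removable node and $X_2$ the trailing run of virtual $B_{hor}$ nodes. Since $X_2$ then contains no $R$, you conclude the hypothesis of Proposition \ref{princ0} is vacuous and invoke it to remove \emph{all} $R$'s in $X_1$. But this is not the same operation as removing only the removable nodes of the staggered sequence. The staggered sequence starts after the last addable $\mathfrak{j}$-node; by definition it only contains boundary nodes (no $A$), and the relation $<_{co}$ forces consecutivity in $\mathcal{E}_{\mathfrak{j}}(\ulambda)$, so any $R$ sitting \emph{below} the last $A$ in $v_{\mathfrak{j}}(\ulambda)$ does not belong to the staggered sequence and must not be removed. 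Your $X_1$ contains precisely those extra $R$'s, so applying Proposition \ref{princ0} with your decomposition produces a different $l$-partition than the $\ulambda'$ you defined. The argument therefore does not establish $\ulambda'\in\Uglov{e}{\bf s}$.

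The paper handles this by splitting into two cases. If $v_{\mathfrak{j}}(\ulambda)$ has no $A$ at all, then your reasoning is essentially correct: all $R$'s are in the staggered sequence and removing them all is fine. Otherwise one writes $v_{\mathfrak{j}}(\ulambda)=X_1AX_2$ with $A$ the last addable letter and $X_2$ the part above it (which carries exactly the removable nodes of the staggered sequence, at least one of them by Proposition \ref{exif}), and one must now \emph{prove} that no $R$ in $X_1$ is $(1)$- or $(2)$-connected to an $R$ in $X_2$ before Proposition \ref{princ0} can be applied. This is where Lemmas \ref{l2} and \ref{l20} are used, and the verification is not vacuous: they show that a $(1)$- or $(2)$-connection between two removable $\mathfrak{j}$-nodes forces a chain of consecutive extended $\mathfrak{j}$-nodes of the boundary between them, so an intervening $A$ is impossible, i.e.\ connected pairs of removable nodes cannot be separated by the $A$. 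That is the content your argument is missing. Your concerns about the precise choice of $\gamma_1$ and the consecutivity bookkeeping are real but secondary; the load-bearing gap is the incorrect $X_1,X_2$ split and the resulting omission of the connection check.
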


\begin{proof}
Let us assume that  $\ulambda\in \Uglov{e}{\bf s}$. Then by Proposition \ref{exif}, there  exists a staggered sequence of $\mathfrak{j}$-nodes.  
If the word   $v_{\mathfrak{j}} (\ulambda)$ has no node of type $A$ then it has  at least one node of type $R$ and if we remove all the removable $\mathfrak{j}$-node, Lemma \ref{princ0} ensures that we obtain a FLOTW $l$-partition.

Otherwise we have $v_1 (\ulambda)=X_1 A X_2$  with $X_1$ and $X_2$ two subwords in the letters $\{A,R,B_{ver},B_{hor}\}$  with at least one removable node in $X_2$. By Lemma \ref{l20} and Lemma \ref{l2},  
  no node in $X_1$ can be $(1)$ or $(2)$-connected with a node in $X_2$ 
 and we can thus conclude by Proposition \ref{princ0}.

 \end{proof}
  \begin{Rem}
  In fact, we can slightly generalize the above proposition. Assume that   ${\bf s}=(s_1,\ldots,s_l)\in \mathcal{S}_e^l$ and assume that $\ulambda\in\Uglov{e}{\bf s}$
   then there exists $\mathfrak{j}\in \mathbb{Z}/e\mathbb{Z}$ such that 
  $v_{\mathfrak{j}} (\ulambda)=X_1 X X_2$ with  $X_1$ and $X_2$ two subwords in the letters $\{A,R,B_{ver},B_{hor}\}$ 
 and $X\in \{B_{ver},A\}$ and such that $X_2$ admits at least one element of type $R$. The above results show that if we remove all the nodes associated to the removable nodes is $X_2$ then we still obtain an $l$-partition in   $\Uglov{e}{\bf s}$.

  \end{Rem}

Now let us study the reversed implication. The following result is a little bit stronger than what we need for the next proposition but it will be crucial in the following to have it in this ``strong'' form.

\begin{Prop}\label{convA}
Let  ${\bf s}=(s_1,\ldots,s_l)\in \mathcal{S}_e^l$ and assume that $\ulambda\in\Uglov{e}{\bf s}$.
 Let $\mathfrak{j}\in \mathbb{Z}/e\mathbb{Z}$. 
  Consider the word 
 $\overline{w}_{\mathfrak{j}}(\ulambda)$  (see \S \ref{ra}) 
 Let $\gamma_q<\gamma_{q-1}<\ldots <\gamma_1$ be the addable  $\mathfrak{j}$-nodes corresponding  to 
 the letters $A$ in this word. For all $r\in \{1,\ldots,q\}$, if we add $\gamma_r$, \ldots, $\gamma_1$ to the $l$-partition  
  $\ulambda$ then  the resulting $l$-partition $\umu$ is in  $\Uglov{e}{\bf s}$
\end{Prop}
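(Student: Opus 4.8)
The plan is to proceed by induction on the rank $n$ of $\ulambda$, reducing the addition of the block of addable $\mathfrak{j}$-nodes $\gamma_r, \ldots, \gamma_1$ to a sequence of single good-addable-node moves, using the FLOTW description in Proposition \ref{flotwdef} to verify at each stage that conditions $(1)$ and $(2)$ continue to hold. First I would recall the structure of $\overline{w}_{\mathfrak{j}}(\ulambda)$: it is obtained from $w_{\mathfrak{j}}(\ulambda)$ by a single pass of cancelling factors $RA$, so after this pass the addable nodes $\gamma_q < \cdots < \gamma_1$ that remain are precisely those $A$'s that were \emph{not} immediately preceded (in the $<$-order, among addable/removable $\mathfrak{j}$-nodes) by an uncancelled $R$. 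The key combinatorial input is that $\gamma_1$, the largest remaining addable $\mathfrak{j}$-node, is in fact a \emph{good} addable $\mathfrak{j}$-node of $\ulambda$: the word $\widetilde{w}_{\mathfrak{j}}(\ulambda) = A^p R^q$ is a subword of $\overline{w}_{\mathfrak{j}}(\ulambda)$, and the rightmost $A$ of $\overline{w}_{\mathfrak{j}}$, which is $\gamma_1$, survives all further cancellations, so it is the good addable $\mathfrak{j}$-node (provided $p > 0$, which holds since $q \geq 1$). By the observation right after Definition \ref{uglov}, adding a good addable node to an element of $\Uglov{e}{\bf s}$ yields an element of $\Uglov{e}{\bf s}$; so $\umu_1 := \ulambda \cup \{\gamma_1\} \in \Uglov{e}{\bf s}$.

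The induction step is then to show that $\gamma_2$ remains a good addable $\mathfrak{j}$-node of $\umu_1$, that $\gamma_3$ remains good for $\umu_2 := \umu_1 \cup \{\gamma_2\}$, and so on, so that each $\umu_r := \ulambda \cup \{\gamma_1, \ldots, \gamma_r\}$ lies in $\Uglov{e}{\bf s}$. The point is that adding the higher addable nodes $\gamma_1, \ldots, \gamma_{r-1}$ does not disturb the ``goodness'' of $\gamma_r$: in the word $w_{\mathfrak{j}}(\umu_{r-1})$, the node $\gamma_r$ now appears, but the nodes strictly above it in the $<$-order have only changed in a controlled way — each previously addable $\gamma_i$ ($i < r$) has become a removable node of $\umu_{r-1}$ sitting one box further out, and by Lemma \ref{L1} (applied with $\umu_{r-1}$, which still satisfies condition $(1)$ of Proposition \ref{flotwdef}) these new removable nodes lie on parts that are strictly longer than the part on which $\gamma_r$ is addable, hence strictly above $\gamma_r$ in the order; moreover the $R$ originally cancelling against $\gamma_i$ in forming $\overline{w}_{\mathfrak{j}}$ — if there was one — is still present and still cancels, since $\gamma_i$ itself is gone from the addable list. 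So passing from $w_{\mathfrak{j}}(\ulambda)$ to $w_{\mathfrak{j}}(\umu_{r-1})$ the portion of the word at and below $\gamma_r$ is unchanged, while above it the net effect is to turn some $A$'s (namely $\gamma_1,\dots,\gamma_{r-1}$) into $R$'s without creating any new uncancelled $A$ to the right of $\gamma_r$; hence $\gamma_r$ is still the rightmost surviving $A$ in $\widetilde{w}_{\mathfrak{j}}(\umu_{r-1})$, i.e.\ good.

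The main obstacle I anticipate is precisely the bookkeeping in this last step: one must argue carefully that cancelling $RA$-factors in the word of $\umu_{r-1}$ cannot ``expose'' a new addable $\mathfrak{j}$-node to the right of $\gamma_r$ and thereby steal the title of good addable node from $\gamma_r$. This requires knowing that the removable nodes created by adding $\gamma_1, \ldots, \gamma_{r-1}$ either cancel against the same $R$'s they did before (when $\gamma_i$ was preceded by an $R$ in $w_{\mathfrak{j}}(\ulambda)$) or, when $\gamma_i$ was \emph{not} preceded by an $R$ (so it survives into $\overline{w}_{\mathfrak{j}}$), the new $R$ at $\gamma_i$'s position has no $A$ to its immediate left to cancel with — which is exactly the condition that made $\gamma_i$ survive the first cancellation pass. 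Organizing these cases cleanly, and handling the boundary behaviour where a virtual node of type $B_{ver}$ or $B_{hor}$ sits between consecutive $\gamma_i$'s, is where the real work lies; everything else follows formally from Lemma \ref{L1}, the remark after Definition \ref{uglov}, and induction on $r$ (equivalently on the rank, since each added node increases $n$ by one and the resulting partitions are again FLOTW).
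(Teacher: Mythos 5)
The central claim of your proof --- that $\gamma_1$, the largest $A$ surviving in $\overline{w}_{\mathfrak{j}}(\ulambda)$, is automatically the good addable $\mathfrak{j}$-node --- is false, and the parenthetical ``(provided $p>0$, which holds since $q\geq 1$)'' is precisely where it breaks. The word $\overline{w}_{\mathfrak{j}}$ is the result of a \emph{single} pass of $RA$-cancellation; it can still contain $RA$-factors, so the rightmost $A$ of $\overline{w}_{\mathfrak{j}}$ need not survive the further cancellations producing $\widetilde{w}_{\mathfrak{j}}$. Concretely, take $e=3$, $l=3$, ${\bf s}=(0,0,0)$ and $\ulambda=\big((6,2),(3,1),(1)\big)$. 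One checks that $\ulambda$ satisfies both conditions of Proposition \ref{flotwdef}, so $\ulambda\in\Uglov{e}{\bf s}$. For $\mathfrak{j}=0$ the relevant nodes, in increasing order, are $(1,1,3)$, $(2,2,1)$ (removable, content $0$), $(1,4,2)$ (addable, content $3$), $(1,7,1)$ (addable, content $6$); thus $w_0=RRAA$, $\overline{w}_0=RA$ and $\widetilde{w}_0=\emptyset$. Here $q=1$ but $p=0$: there is no good addable $0$-node whatsoever, so $\umu_1=\ulambda\cup\{\gamma_1\}$ is not obtained from $\ulambda$ by a good-node move. Yet the Proposition correctly asserts $\umu_1=\big((7,2),(3,1),(1)\big)\in\Uglov{e}{\bf s}$, which one verifies directly from Proposition \ref{flotwdef}. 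So the statement is strictly stronger than ``the $\gamma_i$ can be added by a sequence of Kashiwara good-node moves,'' and your induction scheme cannot recover it.

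Because of this, the reduction to the remark after Definition \ref{uglov} collapses at the very first step, and the subsequent bookkeeping about how $w_{\mathfrak{j}}(\umu_{r-1})$ evolves is moot. The paper's proof takes an entirely different route: it takes the resulting $\umu$ as given and verifies the two defining conditions of Proposition \ref{flotwdef} directly. For condition $(1)$ it shows that any threatened inequality $\lambda^c_i=\lambda^{c+1}_{i+s_{c+1}-s_c}$ forces the next larger node $\eta$ to be consecutive, addable, and hence itself an $A$ of $\overline{w}_{\mathfrak{j}}$ already in the added list. For condition $(2)$ it shows, using Lemma \ref{L1}, Remark \ref{LLrem} and a direct content computation, that no removable $\mathfrak{j}$-node can sit between $\gamma_s$ and the threatened addable node $\eta$, so the single $RA$-cancellation cannot separate them and $\eta$ is again forced into the added list. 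Nothing there invokes the crystal operator, and that is exactly what allows the argument to cover cases such as the one above. To salvage a crystal-operator argument you would first need something like ``$\overline{w}_{\mathfrak{j}}=\widetilde{w}_{\mathfrak{j}}$ for FLOTW multipartitions,'' but the example shows this is false.
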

\begin{proof}
Let  $\umu$ be the $l$-partition obtained by adding $\gamma_r,\ldots,\gamma_1$ to $\ulambda$. We need to check the two conditions of FLOTW $l$-partition
\begin{itemize}
\item Assume that $\lambda^{c}_{i}=\lambda^{c+1}_{i+s_{c+1}-s_c}$ for $c\in \{1,\ldots,l-1\}$ and $i\in \mathbb{N}_{>0}$, and that 
 we have $\gamma_i=(i+s_{c+1}-s_c,\lambda^{c+1}_{i+s_{c+1}-s_c}+1,c+1)$ then the node 
 $\eta=(i,\lambda^c_i+1,c)$ must be addable and we have $\eta>\gamma_i$. Note also that they are consecutive so that  we must have $\eta=\gamma_{i-1}$ and the condition is still satisfied. The case where $\lambda^{l}_{i}=\lambda^{1}_{i+s_{1}-s_l+e}$ is similar.

 \item Assume that the set of residues of the nodes in the vertical boundary of 
parts of length $k\in \mathbb{Z}_{>0}$ is $\{0,1,\ldots,e-1\}\setminus \{\mathfrak{j}\}$ and that 
 $\gamma_s=(a,k,c)$ is an addable node of residue $\mathfrak{j}$ for some $s\in \{1,\ldots,q\}$. Note that  there exists a node 
  $(a',k,c')$ on the vertical boundary of $\ulambda$ with  residue $\mathfrak{j}-1$.
 For all such node, we have  that 
  $\eta=(a',k+1,c')$ must be an addable $\mathfrak{j}$-node. By Lemma \ref{L1}, we moreover have $\gamma_s< \eta$. We want to show that $\eta$ is in the sequence $(\gamma_q,\ldots,\gamma_1)$. This will 
 show that    the set of residues of the nodes of $\umu$ in the vertical boundary of 
parts of length $k$ is $\{0,1,\ldots,e-1\}\setminus \{\mathfrak{j}-1\}$. 
   To do this, we need to show that we don't have any removable $\mathfrak{j}$-node between $\eta$ and $\gamma_s$. We will then be able to conclude 
   by the definition of  $\overline{w}_{\mathfrak{j}}(\ulambda)$.
   
   Assume that we have a removable $\mathfrak{j}$-node $\eta_1=(a_1,b_1,c_1)$ such that $\gamma_s<\eta_1<\eta$. 
    By Lemma \ref{L1} and by the hypothesis, we necessarily have $b_1=k+1$. It is clear that, adding, all
    possible  addable $\mathfrak{j}$-nodes of the form $(a'',k,c'')$ (with $a'\in \mathbb{Z}_{>0}$ and $c''\in \{1,\ldots,l\}$)  gives an $l$-partition satisfying $(1)$ in Definition \ref{flotw}, so we can use 
    Remark \ref{LLrem} to   deduce that $\textrm{cont}(\gamma_s)=\textrm{cont} (\eta)$ or  
   $\textrm{cont}(\gamma_s)=\textrm{cont} (\eta)-e$.

   Assume that  $\textrm{cont}(\gamma_s)=\textrm{cont} (\eta)$  then we have $c_1>{c'}$ from what we deduce $a_1=a'+s_{c_1}-s_{c'}$ because $\textrm{cont}(\eta_1)=\textrm{cont} (\eta)$ and thus $\lambda^{c'}_{a'}\geq \lambda^{c_1}_{a_1}$ which is a contradiction. Assume that   $\textrm{cont}(\gamma_s)=\textrm{cont} (\eta)-e$.
   Then we obtain $a'=a+s_{c'}-s_c-e$. If $c> c'$ we obtain   $\lambda^{c'}_{a'}\geq \lambda^{c}_{a-e}$ which is impossible. Thus we have $c\leq c'$ and thus $c_1<c'$. We now have $a_1=a'+s_{c_1}-s_{c'}+e$ and thus
    $\lambda^{c'}_{a'}\geq \lambda^{c_1}_{a_1}$ which is a contradiction.

\end{itemize}

\end{proof}

\begin{Cor}
Let  ${\bf s}=(s_1,\ldots,s_l)\in \mathcal{S}_e^l$ and assume that $\ulambda$ is a staggered $l$-partition then we have $\ulambda\in\Uglov{e}{\bf s}$.

\end{Cor}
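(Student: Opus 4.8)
The plan is to prove the contrapositive-flavored statement by induction on the rank $n$ of $\ulambda$, using Proposition \ref{convA} as the engine for the inductive step. So suppose $\ulambda$ is a staggered $l$-partition with ${\bf s}\in\mathcal{S}_e^l$; if $\ulambda=\uemptyset$ we are done, so assume $n>0$. By definition of staggered $l$-partition there is $\mathfrak{j}\in\mathbb{Z}/e\mathbb{Z}$ and a staggered sequence $(\gamma_i)_{i\in\mathbb{Z}_{>0}}$ of extended $\mathfrak{j}$-nodes of $\ulambda$, and if we write $\ulambda'$ for the $l$-partition obtained by deleting all the removable $\mathfrak{j}$-nodes of this sequence, then $\ulambda'$ is again staggered, has smaller rank, and so by the induction hypothesis $\ulambda'\in\Uglov{e}{\bf s}$. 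The task is therefore to show that $\ulambda$ is obtained from $\ulambda'$ by adding a set of $\mathfrak{j}$-nodes that Proposition \ref{convA} certifies to keep us inside $\Uglov{e}{\bf s}$.

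The key step is to match the combinatorial data. First I would translate the staggered sequence for $\ulambda$ into data about $\ulambda'$: the removable $\mathfrak{j}$-nodes $(\gamma_{i_1},\ldots,\gamma_{i_r})$ of the sequence (ordered by $<$) become, when read in $\ulambda'$, a consecutive run of \emph{addable} $\mathfrak{j}$-nodes, say $\delta_r<\delta_{r-1}<\cdots<\delta_1$, and $\ulambda=\ulambda'\cup\{\delta_1,\ldots,\delta_r\}$. The crucial point is that these are exactly (a terminal segment of) the addable $\mathfrak{j}$-nodes appearing as the letters $A$ in $\overline{w}_{\mathfrak{j}}(\ulambda')$: indeed the three conditions in the definition of staggered sequence say precisely that the $\gamma_i$ are consecutive for $<_{co}$ (hence, after passing to $\ulambda'$, no removable $\mathfrak{j}$-node of $\ulambda'$ lies strictly between two of the $\delta_k$, since a removable node cannot be a virtual node of the horizontal boundary), that there is a removable node in the sequence, and that below $\gamma_1$ one meets only virtual vertical-boundary nodes or an addable node — which is exactly the combinatorial signature of an $A$-letter surviving the $RA$-cancellation that produces $\overline{w}_{\mathfrak{j}}$. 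Once this identification is made, Proposition \ref{convA} (applied with $r$ equal to the number of $\delta_k$'s, so that we add \emph{all} of $\delta_r,\ldots,\delta_1$) gives $\ulambda\in\Uglov{e}{\bf s}$, completing the induction.

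The main obstacle I expect is precisely this bookkeeping of nodes under the passage from $\ulambda$ to $\ulambda'$ and back: one must verify carefully that removing the removable $\mathfrak{j}$-nodes of a staggered sequence turns those positions into addable nodes, that \emph{no extra} removable $\mathfrak{j}$-nodes of $\ulambda'$ get interleaved among them (using the $<_{co}$-consecutiveness and the fact that virtual nodes are never removable, together with the observation in \S\ref{notimp} that there is a unique extended node of each content in a given component), and that the third condition of a staggered sequence matches the effect of the $RA$-reduction defining $\overline{w}_{\mathfrak{j}}$. These are the same kinds of word-combinatorics manipulations used in the proof of Proposition \ref{princ}, run in reverse, so no genuinely new idea is needed; the care is in aligning the definition of staggered sequence with the word $\overline{w}_{\mathfrak{j}}$ rather than $\widetilde{w}_{\mathfrak{j}}$.
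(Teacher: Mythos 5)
Your proposal is exactly the paper's proof: induction on $n$, with Proposition \ref{convA} furnishing the inductive step once the deleted nodes of the staggered sequence are recognized as addable nodes of $\ulambda'$. The extra bookkeeping you supply (matching those deleted nodes to a terminal run of the $A$'s in $\overline{w}_{\mathfrak{j}}(\ulambda')$) is precisely the detail the paper's two-line proof leaves implicit.
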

\begin{proof}
This is done by induction on $n$. Of course, if $n=0$, the result is trivial as the empty multipartition is in $\Uglov{e}{\bf s}$. Assume that $\ulambda$ is a staggered $l$-partition and remove the removable nodes of an associated staggered sequence. By induction, we obtain $l$-partition in $\Uglov{e}{\bf s}$. Then by Proposition \ref{convA}, we obtain that $\ulambda$ is in 
$\Uglov{e}{\bf s}$. The result follows. 

\end{proof}

\section{Crystal isomorphisms}\label{isosec}

The aim  of this section is to understand the links between the various types of Uglov $l$-partitions. 
 We will need to recall and develop results from \cite{JL}. We refer to this paper for details. In this section, we fix $e\in \mathbb{Z}_{>0}$.

\subsection{Action of the extended affine symmetric group} 
\begin{abs}
We denote by $P_l:=\mathbb{Z}^l$ the $\mathbb{Z}$-module with  standard basis $\{y_i\ |\ i=1,\ldots, l\}$. For $i=1,\ldots,l-1$, we denote by $\sigma_i$ the transposition $(i,i+1)$ of  $\mathfrak{S}_l$. Then the extended affine symmetric group 
$\widehat{\mathfrak{S}}_l$ can be seen as 
  the semi-direct product $P_l \rtimes \mathfrak{S}_l$ where the relations  are given by $\sigma_i y_j=y_j \sigma_i$ for $j\neq i,i+1$ and $\sigma_i y_i \sigma_i=y_{i+1}$ for $i=1,\ldots,l-1$ and $j=1,\ldots,l$.  
This group acts faithfully on $\mathbb{Z}^l$  by setting for any ${{{\bf s}}}=(s_{1},\ldots ,s_{l})\in 
\mathbb{Z}^{l}$: %
$$\begin{array}{rcll}
\sigma _{c}.{{{\bf s}}}&=&(s_{1},\ldots ,s_{c-1},s_{c+1},s_{c},s_{c+2},\ldots ,s_{l})&\text{for }c=1,\ldots,l-1 \text{ and }\\
y_i.{{{\bf s}}}&=&(s_{1},s_{2},\ldots,s_i+e,\ldots ,s_{l})&\text{for }i=1,\ldots,l.
\end{array}$$
For any $c\in \{0,...,l-1\}$, we set $z_{c}=y_{1}\cdot \cdot \cdot y_{c}.$
Write also $\xi =\sigma _{l-1}\cdot \cdot \cdot \sigma _{1}$ and $\tau
=y_{l}\xi$  then  $\widehat{\mathfrak{S}}_{l}$ is generated by the transpositions $%
\sigma _{c}$ with $c\in \{1,...,l-1\}$ and $\tau$. We have:
$$\tau. (s_1,\ldots,s_l)=(s_2,\ldots,s_{l},s_1+e).$$
A fundamental domain for this action is contained in the set 
$$\mathcal{S}_{e}^l:=\left\{(s_1,\ldots  ,s_{l})\in \mathbb{Z}^l\ |\ 0\leq s_i-s_j\leq e\ 1\leq i\leq j\leq l  \right\},$$
that we have already met in the last section. 
\end{abs}
\begin{abs}\label{qua}

Let $\mathfrak{g}$ be the quantum affine algebra of type $A^{(1)}_{e-1}$. Let ${\bf s}=(s_1,\ldots,s_l)\in \mathbb{Z}^l$. Then one can set an action (depending on ${\bf s}$) of $\mathfrak{g}$ on the Fock space,   the $\mathbb{Q}$-vector space with basis the set of  all the $l$-partitions $\Pi^l$.  In particular, the action of the Chevalley operators $e_{\mathfrak{j}}$ and  $f_{\mathfrak{j}}$ (with $\mathfrak{j}\in \mathbb{Z}/e\mathbb{Z}$)    on the Fock space is given by:
 $$\forall \mathfrak{j}\in \mathbb{Z}/e\mathbb{Z},\forall \ulambda \in \Pi^l,\ f_\mathfrak{j} \ulambda=\sum_{[\umu]=[\ulambda]\sqcup\{\gamma\},\ \operatorname{res}(\gamma)=\mathfrak{j}} \umu,  e_\mathfrak{j}\ulambda=\sum_{[\ulambda]=[\umu]\sqcup\{\gamma\},\ \operatorname{res}(\gamma)=\mathfrak{j}} \umu.$$

 The submodule $V_{\bf s}$ generated by the empty $l$-partition is then an irreducible highest weight module with weight $\Lambda_{s_1}+\ldots +\Lambda_{s_l}$ (where $\Lambda_i$ for $0\leq i\leq e-1$ denote the fundamental weights). 

One can construct the crystal graph of this module using the above combinatorics. It turns out that the vertices of this crystal (which label the associated Kashiwara-Lusztig canonical basis) are given by the set of Uglov $l$-partitions associated with ${\bf s}$. 
\end{abs}

\begin{abs}\label{iso}  
 One can see that the isomorphism class of  $V_{\bf s}$  only depends on the action of ${\bf s}$ modulo the above  action of the extended affine symmetric group $\widehat{\mathfrak{S}}_l$ on $\mathbb{Z}^l$.
 As a consequence, we have a bijection between the sets of Uglov $l$-partitions associated to elements of $\mathbb{Z}^l$ in the same orbit:
$$\forall \sigma\in  \widehat{\mathfrak{S}}_l,\  
 \Psi^e_{{\bf s}\to  \sigma.{\bf s}}: \Psi^e_{{\bf s}} (n)\to \Psi^e_{\sigma_i. {\bf s}}(n).$$

This bijection corresponds to a crystal isomorphism. 
 It can be computed recursively on the rank of the $l$-partitions as follows. Let $\ulambda \in \Uglov{e}{\bf s}$ then there exists $\gamma$  a good removable $\mathfrak{j}$-node 
 for $\ulambda$. Let $\ulambda'\in \Uglov{e}{\bf s}$ be the $l$-partition obtained  by removing $\gamma$ from $\ulambda$. By induction, we know $\umu':= \Psi^e_{{\bf s}\to  \sigma.{\bf s}} (\ulambda')$. Then 
  the crystal graph theory ensures that there exists an addable  $\mathfrak{j}$-node $\gamma'$ 
   for $\umu'$ such that $\gamma'$ is a good removable node for  the $l$-partition $\umu$ obtained by adding $\gamma'$ to $\umu'$. We have  
 $\umu\in \Uglov{e}{\sigma.{\bf s}}$. We then have $\umu:= \Psi^e_{{\bf s}\to  \sigma.{\bf s}} (\ulambda)$.

Of course, this way of computing the bijection may be difficult to handle when the rank gets bigger. The work \cite{JL} shows how one can compute this crystal isomorphism more easily and non recursively.  
 
 By the above discussion, to understand these bijections, it suffices to understand the case where 
$\sigma= \sigma_i$, the transposition $(i,i+1)$ for $i=1,\ldots,l-1$ and the case where  $\sigma=\tau$.

\begin{abs}\label{red}
We claim  that to obtain  
 the whole class of Uglov 
 $l$-partitions $\Uglov{e}{\bf s}$ for all ${\bf s}\in \mathbb{Z}^l$, it suffices to 
\begin{enumerate}
\item describe the elements of  $\Uglov{e}{\bf s}$ where 
${\bf s}\in \mathcal{S}_{e}^l$,
\item for all $i\in \{1,\ldots,l\}$, use the isomorphisms $\Psi^e_{{\bf s}\to \sigma_i . {\bf s}}$ for ${\bf s}=(s_1,\ldots,s_l)\in \mathbb{Z}^l$, 
 satisfying  $s_i\leq s_{i+1}$,
\item  use the isomorphisms $\Psi^e_{{\bf s}\to \tau. {\bf s}}$ for all ${\bf s}=(s_1,\ldots,s_l)\in \mathbb{Z}^l$ such that $s_l\leq s_1+e$.  
\end{enumerate}
First, it follows from an easy induction argument that if ${\bf s}=(s_1,\ldots,s_l)\in \mathbb{Z}^l$ is such that 
$s_1\leq s_2\leq \ldots \leq s_l$ then one can obtain all the elements of the form
 $(s_{\sigma (1)},\ldots,s_{\sigma (l)})$ with $\sigma\in \mathfrak{S}_l$ by applying isomorphisms as in $(2)$. 
 
Assume that ${\bf s}'\in \mathbb{Z}^l$ and let ${\bf s}\in \mathcal{S}_{e}^l$  in the orbit of ${\bf s}'$ modulo $\widehat{\mathfrak{S}}_l$. 
 By hypothesis, $\Uglov{e}{\bf s}$ is known. By Remark \ref{translation}, one can assume that 
$${\bf s}'=(s_{\sigma (1)}+k_{\sigma (1)}e,\ldots,s_{\sigma (l)}+k_{\sigma (l)}e),$$
where $(k_1,\ldots,k_l)$ is a collection of positive integers and we can assume that
$$s_{\sigma (1)}+k_{\sigma (1)}e\leq \ldots \leq s_{\sigma (l)}+k_{\sigma (l)}e.$$
If $k_{\sigma (l)}=0$ then ${\bf s}'={\bf s}$ and we are done. Otherwise, 
note that we have:
$$\tau. (s_{\sigma (l)}+(k_{\sigma (l)}-1)e, s_{\sigma (1)}+k_{\sigma (1)}e,\ldots,s_{\sigma (l-1)}+k_{\sigma (l-1)}e)={\bf s}'.$$
with $s_{\sigma (l-1)}+k_{\sigma (l-1)}e\leq s_{\sigma (l)}+(k_{\sigma (l)}-1)e+e$. 
The result now follows  by induction on $\sum_{1\leq i\leq l} k_i$. 
\end{abs}

\subsection{Explicit computations}

Let ${\bf s}\in \mathbb{Z}^l$. We now want to show how we can explicitly compute the isomorphisms
$ \Psi^e_{{\bf s}\to  \tau.{\bf s}} $ and $ \Psi^e_{{\bf s}\to  \sigma_c.{\bf s}} $ (with $1\leq c\leq l-1$).
For the first map, this is easy:

\begin{Prop}[Jacon-Lecouvey \cite{JL}]\label{tau}
 For all $\ulambda=(\lambda^1,\ldots,\lambda^l)\in \Uglov{e}{\bf s}$, we have:
 $$ \Psi^e_{{\bf s}\to  \tau.{\bf s}} (\ulambda)
 =(\lambda^2,\ldots,\lambda^{l},\lambda^1).$$
 \end{Prop}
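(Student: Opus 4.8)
The plan is to prove the statement by induction on the rank $n$ of the $l$-partition, exploiting the recursive characterization of the crystal isomorphism $\Psi^e_{{\bf s}\to\tau.{\bf s}}$ recalled in \S\ref{iso}. Concretely, I will show that the map $\ulambda=(\lambda^1,\ldots,\lambda^l)\mapsto(\lambda^2,\ldots,\lambda^l,\lambda^1)$ commutes with the Kashiwara crystal operators, i.e. that it intertwines the good removable/addable $\mathfrak{j}$-node structure on $\Uglov{e}{\bf s}$ with that on $\Uglov{e}{\tau.{\bf s}}$; by the uniqueness of the crystal isomorphism of highest weight crystals (two morphisms of the same connected highest weight crystal agreeing on the highest weight vertex coincide), this forces the map to equal $\Psi^e_{{\bf s}\to\tau.{\bf s}}$. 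The base case $n=0$ is trivial since both sides send $\uemptyset$ to $\uemptyset$.

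The key computation is to compare the word $w_{\mathfrak{j}}(\ulambda)$ (the sequence of addable/removable $\mathfrak{j}$-nodes read in increasing order with respect to $<$, as in \S\ref{ra}) for $\ulambda$ with respect to $(e,{\bf s})$, and the corresponding word $w_{\mathfrak{j}'}(\mu)$ for $\mu=(\lambda^2,\ldots,\lambda^l,\lambda^1)$ with respect to $(e,\tau.{\bf s})$, where $\tau.{\bf s}=(s_2,\ldots,s_l,s_1+e)$. First I would note that the nodes of $\mu$ are exactly the nodes of $\ulambda$ with components cyclically relabelled: a node $(a,b,c)$ of $\ulambda$ with $c\geq 2$ becomes $(a,b,c-1)$ in $\mu$, and a node $(a,b,1)$ becomes $(a,b,l)$. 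Crucially, the content of $(a,b,c)$ in $\ulambda$ is $b-a+s_c$, while the content of the corresponding node in $\mu$ is $b-a+s_c$ for $c\geq 2$ (since the $(c-1)$-st entry of $\tau.{\bf s}$ is $s_c$) and $b-a+(s_1+e)$ for $c=1$. Hence residues modulo $e$ are preserved, so an $\mathfrak{j}$-node of $\ulambda$ corresponds to an $\mathfrak{j}$-node of $\mu$, and addability/removability is obviously preserved since these depend only on the shape of each individual partition. What must be checked is that the total order $<$ on $\mathfrak{j}$-nodes is preserved: for two $\mathfrak{j}$-nodes both with $c\geq 2$ or both with $c=1$, this is immediate because contents and the tie-breaking rule (larger component first) are unchanged; the only subtlety is comparing a node with old component $1$ against a node with old component $c\geq 2$. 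Here I would use that the old component-$1$ node has its content shifted up by exactly $e$, which is precisely the shift that realizes the cyclic rotation $\tau$ on the multicharge, so the relative order in the circular word is preserved — this is exactly the content of the $<_{co}$ relation from \S\ref{refor} being compatible with $\tau$.

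Once the order-preservation is established, the words $w_{\mathfrak{j}}(\ulambda)$ and $w_{\mathfrak{j}}(\mu)$ coincide as sequences of $A$'s and $R$'s (with matched underlying nodes), hence the reduction procedure deleting factors $RA$ produces the same $\widetilde w_{\mathfrak{j}}$, so $\ulambda$ has a good removable $\mathfrak{j}$-node if and only if $\mu$ does, and the corresponding good nodes match under the cyclic relabelling. Applying the induction hypothesis to $\ulambda'$ obtained by removing the good removable $\mathfrak{j}$-node (which lies in $\Uglov{e}{\bf s}$) and comparing with the recursive description of $\Psi^e_{{\bf s}\to\tau.{\bf s}}$ in \S\ref{iso} then finishes the induction. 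The main obstacle I anticipate is the careful bookkeeping in the order comparison across the ``wrap-around'' component (old component $1$ versus higher components), i.e. verifying that the $+e$ content shift genuinely makes the linear order $<$ on $\mathcal{E}_{\mathfrak{j}}(\mu)$ the cyclic rotation of the one on $\mathcal{E}_{\mathfrak{j}}(\ulambda)$; everything else is routine. This is of course the argument already carried out in \cite{JL}, which we may simply cite, but the sketch above indicates how one re-derives it.
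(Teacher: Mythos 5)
The paper cites \cite{JL} for this proposition and supplies no proof of its own, so there is nothing in the text for your argument to diverge from. Your sketch is correct and is the natural way to establish the claim: the cyclic rotation of components fixes $\uemptyset$, and the heart of the matter is that it sends $\mathcal{E}_{\mathfrak{j}}(\ulambda)$ computed with respect to ${\bf s}$ onto $\mathcal{E}_{\mathfrak{j}}(\umu)$ computed with respect to $\tau.{\bf s}=(s_2,\ldots,s_l,s_1+e)$ as an order isomorphism preserving addability and removability, forcing $w_{\mathfrak{j}}(\ulambda)=w_{\mathfrak{j}}(\umu)$ with matching good nodes; uniqueness of a morphism of connected highest-weight crystals (or the induction on rank you describe) then identifies the rotation with $\Psi^e_{{\bf s}\to\tau.{\bf s}}$. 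You correctly flag the only delicate comparison, a former component-$1$ node against a node of higher component: since elements of $\mathcal{E}_{\mathfrak{j}}$ have contents differing by multiples of $e$, the $+e$ shift on old component $1$ together with it becoming component $l$ (the largest index, hence smallest in the tie-break ``$c_1>c_2$'') makes the order-preservation go through in every case. It would be worth writing out that short case analysis explicitly rather than appealing to a ``circular word'' picture, but the argument is sound and matches what \cite{JL} does.
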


\end{abs}

\begin{abs}\label{algo}
In \cite{JL}, formulae for the bijections   $\Psi^e_{{\bf s}\to  \sigma_i .{\bf s}}$ have been obtained. We here 
 translate these formulae in terms of extended Young diagrams (see also \cite{J}). 
 Set  ${\bf s}':=\sigma_i.  {\bf s}=(s_1,\ldots, s_{i+1},s_i,\ldots s_i)$. Let $\ulambda\in \Uglov{e}{\bf s}$. We have:
 $$\Psi^e_{{\bf s}\to {\bf s}'}(\ulambda)=(\lambda^1,\ldots,\lambda^{i-1},\widetilde{\lambda}^{i+1},\widetilde{\lambda}^{i},\lambda^{i+2},\ldots,\lambda^l),$$
and we now explain how one can obtain $(\widetilde{\lambda}^{i+1},\widetilde{\lambda}^{i})$ 
 from $({\lambda}^{i},{\lambda}^{i+1})$.

First,  We write $\lambda^i=(\lambda^i_1,\ldots,\lambda^i_{m_i})$ with $m_i\in \mathbb{Z}_{>0}$ and 
  $\lambda^{i+1}=(\lambda^{i+1}_1,\ldots,\lambda^{i+1}_{m_{i+1}})$ with $m_{i+1}\in \mathbb{Z}_{>0}$. We then define
  $$m:=\text{Max} (m_{i+1}+s_i-s_{i+1},m_i).$$
  Now adding as many zero part as necessary we slightly abuse notation by assuming  that 
 $\lambda^i=(\lambda^i_1,\ldots,\lambda^1_{m})$  and 
  $\lambda^{i+1}=(\lambda^{i+1}_1,\ldots,\lambda^{i+1}_{m -s_i+s_{i+1}})$.

As we have already seen, it suffices to consider the case   $s_i\leq s_{i+1}$. We will now consider the nodes $(a,\lambda_{a}^{i},i)$ on the vertical boundary of $\lambda^{i}$  from $a=1$ to $a=m$ and construct the new partitions $\widetilde{\lambda}^i$ and 
 $\widetilde{\lambda}^{i+1}$ 
  step by step by moving some boxes of the Young diagram of $(\lambda^i,\lambda^{i+1})$:
 \begin{itemize}
 \item Let us consider the  node $\gamma_1=(1,\lambda^{i}_{1},i)$, 
  and the associated content $\operatorname{cont} (\gamma_1)$. We consider the node $\gamma_1'=(a_1,\lambda^{i+1}_{a_1},i+1)$ on the vertical boundary 
   of $\lambda^{i+1}$ such that the content of $\gamma_1'$ is maximal among the  
   contents of the nodes which are smaller than the content of $\gamma_1$. Then we move all the nodes of contents 
 $\operatorname{cont}(\gamma_1')+1$, \ldots ,$\operatorname{cont}(\gamma_1)$ from the part
  $\lambda^i_{1}$ to the part $\lambda^{i+1}_{a_1}$. 
  
\item    Let us consider the  node $\gamma_2=(2,\lambda^{i+1}_{2},i)$, 
  and the associated content $\operatorname{cont} (\gamma_2)$. We consider the node $\gamma_2'=(a_2,\lambda^{i+1}_{a_2},i+1)$ with $a_2\neq a_1$  on the vertical boundary 
   of $\lambda^{i+1}$ such that the content of $\gamma_2'$ is maximal among the  
   contents of the nodes which are smaller than $ \operatorname{cont} (\gamma_2)$. Then we move all the nodes of content 
 $\operatorname{cont}(\gamma_2)+1$, \ldots ,$\operatorname{cont}(\gamma_2')$ from the part
  $\lambda^i_{2}$ to $\lambda^{i+1}_{a_2}$.
   \item We continue this process until we reach 
   $\gamma_{m}=(m,\lambda^{i}_{m},i)$.
   \end{itemize}
   At the end of the process, the bipartition $(\lambda^i,\lambda^{i+1})$ becomes $(\widetilde{\lambda}^{i+1},\widetilde{\lambda}^{i})$ 

\end{abs}
\begin{exa}
Take $(s_1,s_2)=(0,1)$, $e=3$ and $\ulambda=(3.2,1)$ which is  in $\Uglov{e}{\bf s}$.   The associated extended  Young diagram is:
\vspace{1cm}  
  
  \centerline{\small{
 \Bigg(\  \ \ \ \ytableausetup
{mathmode}\begin{ytableau}
\none  &  \none  & \none  & \none  & \none & {5} & {6} & \none[\dots]& \none[\dots] \\
\none  &  { 0} & {1} & {\bf  2}& {\bf 3} \\
\none & \overline{ 1} & {\bf  0} \\
\overline{ 3}  \\  
\overline{4}  \\
\overline{5}  \\
\overline{6}  \\
\none[\vdots] \\
 \none[\vdots] \\
\end{ytableau},
\begin{ytableau}
\none  &  \none  & 3 & 4 & 5 & \none[\dots]& \none[\dots] \\
\none  &  { 1} \\  
\overline{1} \\
\overline{2}  \\
\overline{3}  \\
\none[\vdots] \\
 \none[\vdots] \\
\end{ytableau},
\Bigg)}}

We start with the node $(1,4,1)$. The node that we have to associate to it is $(2,1,2)$, and we have to move 
 the nodes $(1,3,1)$ and $(1,4,1)$ to the part $\lambda^2_1$. Then, for 
 $(1,4,1)$, we have to consider the node $(3,0,2)$ and move $(1,4,1)$ to $\lambda^2_2$. Then we are done:
 
 \vspace{1cm}  
  
  \centerline{\small{
 \Bigg(\  \ \ \ \ytableausetup
{mathmode}
\begin{ytableau}
\none  &  \none  & \none & \none & 5 & \none[\dots]& \none[\dots] \\
\none  &  { 1} & {\bf 2} & {\bf 3}\\  
\none & {\bf 0} \\
\overline{2}  \\
\overline{3}  \\
\none[\vdots] \\
 \none[\vdots] \\
\end{ytableau},
\begin{ytableau}
\none  &  \none  & \none  & 3  & 4 & {5} & {6} & \none[\dots]& \none[\dots] \\
\none  &  { 0} & {1}  \\
\none & \overline{ 1}   \\
\overline{ 3}  \\  
\overline{4}  \\
\overline{5}  \\
\overline{6}  \\
\none[\vdots] \\
 \none[\vdots] \\
\end{ytableau}
\Bigg)}}

So we obtain that $\Psi^3_{(0,1)\to (1,0)} (4.2,1)=(3.1,2.1)$. 
\end{exa}

\section{Combinatorial study}

In this section, we study combinatorially in details  the effect of the isomorphisms of \S \ref{iso} on the sets of Uglov multipartitions. 
 To do this,  recall the notations adopted in \S \ref{exte}, 
  in the following sections, we take ${\bf s}=(s_1,\ldots,s_l)\in \mathbb{Z}^l$  and  $\ulambda \in \Uglov{e}{\bf s}$.

\subsection{Nature of nodes and crystal isomorphisms }  
  
For $\mathfrak{j}\in \mathbb{Z}/e\mathbb{Z}$, the aim will be  to understand how the word  $v_{\mathfrak{j}}(\ulambda)$ 
is transformed after application 
of a crystal isomorphism.  Let ${\bf s}' \in \mathbb{Z}^l$ be in the orbit of ${\bf s}$ modulo the action of $\mathfrak{\widehat{S}}_l$ and denote $\umu:=\Psi^e_{{\bf s}\to  {\bf s}'} (\ulambda)$.

\begin{abs}\label{tau2}
We here study how the word  $v_{\mathfrak{j}} (\ulambda)$ is affected after application 
 of the crystal isomorphism $\Psi_{{\bf s} \to {\tau.{\bf s}}}^e$. This case is in fact easy. Indeed, by Proposition \ref{tau}, 
 we have a bijection:
  $$\begin{array}{ccc}
  \mathcal{Y}^{\textrm{ext}} (\ulambda)\sqcup \{\textrm{addable nodes of }\ulambda\} &\to& \mathcal{Y}^{\textrm{ext}} (\Psi_{{\bf s} \to {\tau.{\bf s}}}^e(\ulambda)))\sqcup \{\textrm{addable nodes of }\Psi_{{\bf s} \to {\tau.{\bf s}}}^e(\ulambda)\}  \\
  (a,b,c) & \mapsto & (a,b,c-1)
  \end{array}$$
 (where the integer of the associated component is understood modulo $l$) and this bijection preserves the order $>$ and the nature of the nodes. This implies that $v_{\mathfrak{j}} (\ulambda)=v_{\mathfrak{j}} (\Psi_{{\bf s} \to {\tau.{\bf s}}}^e(\ulambda))$. It is clear that if $\gamma$ is normal then  $\gamma=(a,b,c)$ in $\ulambda$ is   associated to 
  the node $\gamma=(a,b,c-1)$ in $\Psi_{{\bf s} \to {\tau.{\bf s}}}^e(\ulambda)$.

\end{abs}

\begin{abs}\label{sig} Assume that ${\bf s}=(s_1,\ldots,s_l)\in \mathbb{Z}^l$ and that there exists $c\in \{1,\ldots,l-1\}$ such that $s_c\leq s_{c+1}$. 
 We now want to understand how the word  $v_{\mathfrak{j}}(\Psi^e_{{\bf s}\to \sigma_c . {\bf s}}(\ulambda))$ can be obtained  from 
$v_{\mathfrak{j}}(\ulambda)$. Of course, only pairs of consecutive elements of $\mathcal{E}_{\mathfrak{j}}(\ulambda)$ lying in components $c$ and $c+1$  can be modified.  So 
let us assume that $j\in \mathbb{Z}$ and denote $\umu:= \Psi^e_{{\bf s}\to \sigma_c . {\bf s}}(\ulambda)$. We will  consider the elements of  $\mathcal{E}_{\mathfrak{j}}(\ulambda)$   with content $j$ in $\lambda^c$ and the element of  $\mathcal{E}_{\mathfrak{j}}(\ulambda)$  
 with content $j$ in $\lambda^{c+1}$ and  perform the algorithm described above. In this case, we say that the two associated nodes are {\it comparable}. Both extended or addable nodes  are of  nature $A$, $R$, $B_{hor}$, $B_{ver}$.
The nature of the two (extended) nodes of content $j$ in $\mathcal{E}_{\mathfrak{j}}(\ulambda)$ 
 can be seen in $v_{\mathfrak{j}}(\ulambda)$: it is given  by a subword
  $Z_1 Z_2$ with 
 $(Z_1,Z_2)\in \{A, R, B_{hor}, B_{ver}\}^2$.
 After application of the map   $\Psi^e_{{\bf s}\to \sigma_i . {\bf s}}$, the nature of the extended nodes of content $j$ in component $c$ and $c'$ will change in general.  
  In $v_{\mathfrak{j}}(\umu)$, the nature of the  two extended nodes  of content $j$ in $\mathcal{E}_{\mathfrak{j}}(\umu)$   are given by a certain subword  $Z_1' Z_2'$ with 
 $(Z_1',Z_2')\in \{A, R, B_{hor}, B_{ver}\}^2$.

 The following table shows how the nature of two consecutive addable or extended nodes can be  transformed by the map $\Psi^e_{{\bf s}\to \sigma_c . {\bf s}}$. One can see that, for one of the $16$ cases below, several possibilities for the  associated pair in 
 $\umu$ may occur.   
 \begin{center}
\begin{tabular}{|c|c||c|c|}
   \hline
   \multicolumn{2}{|c||}{{\bf Node on} $\ulambda$} & \multicolumn{2}{c|}{\bf Node on $\umu$\ } \\
      \hline
  $Z_1$ &  $Z_2$ &    $Z_1'$ & $Z_2'$ \\
   \hline 
   \hline 
   $R$ &$R$& $R$ &$R$ \\
   \hline
   $A$ &$R$& $A$ & $R$ \\
   \hline
   \multirow{2}*{$B_{ver}$} &\multirow{2}*{$R$}& $R$ & $B_{ver}$ ($\diamondsuit$) \\
    \cline{3-4}
    & &    $B_{ver}$& $R$  \\
   \hline
   $B_{hor}$ &$R$& $B_{hor}$ & $R$ \\
   \hline
   \multirow{2}*{$R$} & \multirow{2}*{$A$}& $R$ &$A$ \\
   \cline{3-4}
    & &    $B_{hor}$& $B_{ver}$ \\
   \hline
   $A$ &$A$& $A$ & $A$ \\
   \hline
   \multirow{2}*{$B_{ver}$} &\multirow{2}*{$A$}& $B_{ver}$ & $A$ \\
   \cline{3-4}
    & & $A$ & $B_{ver}$ \\
   \hline
   \end{tabular}\ \ \ \ \ 
\begin{tabular}{|c|c||c|c|}
   \hline
   \multicolumn{2}{|c||}{{\bf Node on} $\ulambda$} & \multicolumn{2}{c|}{\bf Node on $\umu$\ } \\
      \hline
  $Z_1$ &  $Z_2$ &    $Z_1'$ & $Z_2'$ \\
   \hline 
   \hline 
   $B_{hor}$ &$A$& $B_{hor}$ & $A$ \\
   \hline
    \multirow{2}*{$R$} & \multirow{2}*{$B_{hor}$}& $R$ &$B_{hor}$  \\
    \cline{3-4}
     & & $B_{hor}$ & $R$ ($\diamondsuit$)\\
   \hline
    \multirow{2}*{$A$} & \multirow{2}*{$B_{hor}$}& $A$ & $B_{hor}$ \\
    \cline{3-4}
     & & $B_{hor}$ & $A$ \\
   \hline
    \multirow{3}*{$B_{ver}$} & \multirow{3}*{$B_{hor}$}& $R$ & $A$ \\
    \cline{3-4}
    & & $B_{ver}$ & $B_{hor}$ \\
     \cline{3-4}
    & & $B_{hor}$ & $B_{ver}$  ($\diamondsuit$)\\
   \hline
   $B_{hor}$ &$B_{hor}$& $B_{hor}$ & $B_{hor}$ \\
   \hline
   $R$ &$B_{ver}$& $R$ &$B_{ver}$ \\
   \hline
   $A$ &$B_{ver}$& $A$ & $B_{ver}$ \\
   \hline
   $B_{ver}$ &$B_{ver}$& $B_{ver}$ & $B_{ver}$ \\
   \hline
   $B_{hor}$ &$B_{ver}$& $B_{hor}$ & $B_{ver}$ \\
   \hline
\end{tabular} 
\end{center}

\end{abs}

\vspace{0,5cm}

Below, We  study more precisely in which  configuration the situation marked above by a $(\diamondsuit)$ 
 occurs.

\begin{Def}\label{adef} Let ${\bf s}\in \mathbb{Z}^l$  and let $\ulambda$ be an $l$-partition. 
 Let $\mathfrak{j}\in \mathbb{Z}/e\mathbb{Z}$.  
 Assume that $\gamma_1$ and    $\gamma_2$ are two extended  $\mathfrak{j}$-nodes in $\mathcal{E}_{\mathfrak{j}}(\ulambda)$  on, respectively,  components $c_1$ and $c_2$ and  such that $\gamma_2<_{co} \gamma_1$. 
 Let $\gamma_1^1$, \ldots, $\gamma_1^{n_1}$ be the nodes of the vertical boundary of 
  $\lambda^{c_1}$ such that 
  $$\operatorname{cont} (\gamma_1)< 
 \operatorname{cont} (\gamma_1^{1})< \ldots < 
  \operatorname{cont} (\gamma_1^{n_1}),$$
 and   let $\gamma_2^1$, \ldots $\gamma_2^{n_2}$ be the nodes of the vertical boundary of 
  $\lambda^{c_2}$ such that 
  $$\operatorname{cont} (\gamma_2)<
 \operatorname{cont} (\gamma_2^{1})< \ldots < 
  \operatorname{cont} (\gamma_2^{n_2}).$$
 Then we say that $\gamma_1$ and $\gamma_2$  are {\it well-adapted} if we have $n_2\geq n_1$ and we are in one of the following situation. 
 \begin{itemize}
\item We have $c_1\in \{1,\ldots,l-1\}$ so that we have $c_2=c_1+1$ (and the two nodes  have the same content), we have $s_{c_1}\leq s_{c_2}$ 
  and for all $i=1,\ldots,n_1$, we have :
  $$\operatorname{cont} (\gamma_1^{i})\geq 
\operatorname{cont} (\gamma_2^i).$$
\item We have $c_1=l$ so that $c_2=1$ (and $\operatorname{cont} (\gamma_2)=\operatorname{cont} (\gamma_1)+e$), we have $s_l\leq s_1+e$ and  
or all $i=1,\ldots,n_1$, we have :
  $$\operatorname{cont} (\gamma_1^i)-e\geq 
\operatorname{cont} (\gamma_2^i).$$
\end{itemize}
\end{Def}

One can prove the two following lemmas  quite elementary using  the procedure described in  \S \ref{algo}.

\begin{lemma}\label{a2}
  Let ${\bf s}\in \mathbb{Z}^l$  be such that $s_c\leq s_{c+1}$ for $c\in \{1,\ldots,l-1\}$ and let $\ulambda\in \Uglov{e}{\bf s}$. 
  Assume that $\gamma_1$ and $\gamma_2$ are well-adapted extended  $\mathfrak{j}$-nodes  on components $c$ and $c+1$ with the same content $j$ so that  $\gamma_2<_{co} \gamma_1$    then 
  the natures of the  extended nodes of content $j$ on components $c$ and $c+1$ in 
$\Psi^e_{{\bf s}\to \sigma_c . {\bf s}}(\ulambda)$  are respectively 
 \begin{itemize}
 \item $B_{ver}$ and $B_{hor}$ if $\gamma_1$ is of nature $B_{hor}$ and $\gamma_2$ of nature $B_{ver}$. 
 \item $B_{ver}$ and $R$ if $\gamma_1$ is of nature $R$ and $\gamma_2$ of nature $B_{ver}$. 
 \item $R$ and $B_{hor}$ if $\gamma_1$ is of nature $B_{hor}$ and $\gamma_2$ of nature $R$. 
 
 \end{itemize}
\end{lemma}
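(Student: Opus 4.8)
The plan is to analyze directly the box-moving procedure of \S\ref{algo} applied to the pair of components $(\lambda^c,\lambda^{c+1})$, focusing only on the nodes of content $j$, and to use the well-adapted hypothesis to control exactly which boxes of content $j$ get moved. Recall that in the algorithm one sweeps through the nodes $(a,\lambda^c_a,c)$ on the vertical boundary of $\lambda^c$ from $a=1$ downwards; for the $a$-th such node $\gamma_a=(a,\lambda^c_a,c)$ one picks the not-yet-used node $\gamma'_a=(a_k,\lambda^{c+1}_{a_k},c+1)$ on the vertical boundary of $\lambda^{c+1}$ whose content is maximal among those strictly below $\operatorname{cont}(\gamma_a)$, and transfers the boxes of content $\operatorname{cont}(\gamma'_a)+1,\dots,\operatorname{cont}(\gamma_a)$ from row $a$ of $\lambda^c$ to row $a_k$ of $\lambda^{c+1}$. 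So the first step is to identify, among the sweep steps, the one (call it step $a_0$) at which the box of content $j$ in component $c$ is the endpoint $\operatorname{cont}(\gamma_{a_0})$ of a transferred block, and to identify to which row of $\lambda^{c+1}$ that block is sent. The well-adapted hypothesis, namely $n_2\ge n_1$ and $\operatorname{cont}(\gamma_1^i)\ge \operatorname{cont}(\gamma_2^i)$ for $i=1,\dots,n_1$ (here $\gamma_1=\gamma_1^0$ has content $j$ and $\gamma_1^1<\dots<\gamma_1^{n_1}$ are the vertical-boundary nodes of $\lambda^c$ strictly above content $j$, similarly for $\lambda^{c+1}$), is precisely what guarantees that when the sweep reaches the content-$j$ level in $\lambda^c$, the matching partner in $\lambda^{c+1}$ sits at content $j$ as well (not higher, not lower), so that the content-$j$ box is handed across "cleanly".

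The second step is to run the three cases. In each, $\gamma_1$ (component $c$, content $j$) is on the boundary and $\gamma_2$ (component $c+1$, content $j$) is of nature $B_{ver}$ or $R$; by Lemma \ref{LL1} / Remark \ref{LLrem} and the FLOTW axiom (1), having the content-$j$ levels matched up means the relevant rows of $\lambda^c$ and $\lambda^{c+1}$ have equal length, say $k$. Now:
\begin{itemize}
\item If $\gamma_1$ is $B_{hor}$ and $\gamma_2$ is $B_{ver}$: the block ending at content $j$ in $\lambda^c$ consists of exactly one box (the content-$j$ box, since $\gamma_1$ horizontal means the box of content $j-1$ in that row is not on the vertical boundary endpoint in the relevant sense — more precisely $\lambda^c$ has a longer row contributing the $j-1$ content) — wait, rather: the transferred block to row $a_k$ of $\lambda^{c+1}$ lands so that after the move $\widetilde\lambda^{c+1}$ has a box of content $j$ on its horizontal boundary, and $\widetilde\lambda^c$ loses its content-$j$ box, leaving a vertical-boundary node of content $j$. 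So the new pair is $(B_{ver},B_{hor})$ for $(c,c+1)$.
\item If $\gamma_1$ is $R$ and $\gamma_2$ is $B_{ver}$: the content-$j$ box of $\lambda^c$ is removable, hence an entire row of length $k$ in $\lambda^c$ ends there and row below is shorter; moving the trailing block across to the length-$k$ row of $\lambda^{c+1}$, which had content-$j$ box on its vertical boundary, makes that box removable in $\widetilde\lambda^{c+1}$ while $\widetilde\lambda^c$ now has content-$j$ on its vertical boundary — new pair $(B_{ver},R)$.
\item If $\gamma_1$ is $B_{hor}$ and $\gamma_2$ is $R$: symmetric/dual situation; the move turns the content-$j$ site of $\widetilde\lambda^c$ into a removable node and that of $\widetilde\lambda^{c+1}$ into a horizontal-boundary node — new pair $(R,B_{hor})$.
\end{itemize}
Throughout, the inequalities $n_2\ge n_1$ and $\operatorname{cont}(\gamma_1^i)\ge\operatorname{cont}(\gamma_2^i)$ ensure that every sweep step strictly above content $j$ transfers its block either trivially or to a row whose image keeps it "above" content $j$, so that none of those steps touches the content-$j$ boxes; and they ensure no later step reaches back down to content $j$. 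This bookkeeping is the crux.

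The third step is simply to read off the natures from the shapes of $\widetilde\lambda^c,\widetilde\lambda^{c+1}$ produced above, recording the component order: in $\Psi^e_{{\bf s}\to\sigma_c.{\bf s}}(\ulambda)$ the pair sits in components $c$ (the $\widetilde\lambda^{c+1}$ slot is written first in the formula of \S\ref{algo}, but the statement refers to "components $c$ and $c+1$" after the swap, so one must be careful to match the indexing convention of \S\ref{sig}), giving exactly the three claimed outcomes. The $c=l$, $c_2=1$ case is not needed here (that is handled via $\tau$ and Remark \ref{fla} elsewhere). I expect the main obstacle to be the purely combinatorial verification that the well-adapted inequalities really do force the content-$j$ levels to be matched at the moment the sweep reaches them, and that no spurious block crosses the content-$j$ threshold in either direction; once that is pinned down, each of the three cases is a short inspection of before/after Young diagrams. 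This is exactly the kind of elementary-but-fiddly check the authors flag with "One can prove ... quite elementary using the procedure described in \S\ref{algo}."
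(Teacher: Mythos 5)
The high-level approach — unwind the sweep of \S\ref{algo} and use the well-adapted inequalities to control the matching — is the right one, and is exactly what the paper means by ``elementary using the procedure.'' But the mechanics in your three bullet points are wrong, and the final tuples come out right only because of a second, compensating error. The actual content of the lemma is that the natures are \emph{preserved componentwise}: well-adaptedness forces row $a$ of $\lambda^c$ (for $a=1,\dots,n_1$) to be matched to a vertical-boundary node of $\lambda^{c+1}$ with content $\geq \operatorname{cont}(\gamma_2^{\,n_1+1-a})>j$ (one checks this by induction on $a$: only $a-1$ rows have been used so far, all with index $>n_1+1-a$, so $\gamma_2^{\,n_1+1-a}$ is still available and is $\leq \operatorname{cont}(\gamma_1^{\,n_1+1-a})$). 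Consequently the block removed from the row of $\lambda^c$ containing $\gamma_1$ starts at content $\geq j+2$ (or is empty when $\gamma_1$ is removable, since the match is the node of content $j$ itself with the \emph{weak} inequality convention of the algorithm, not the strict one you use), so the content-$j$ box of $\lambda^c$ is \emph{not} moved and $\gamma_1$ retains its nature in $\widetilde\lambda^c$; symmetrically the row of $\lambda^{c+1}$ containing $\gamma_2$ and the row just below it are never touched, so $\gamma_2$ retains its nature in $\widetilde\lambda^{c+1}$. The natures then read off as in the statement because $\Psi^e_{{\bf s}\to\sigma_c.{\bf s}}(\ulambda)$ places $\widetilde\lambda^{c+1}$ in slot $c$ and $\widetilde\lambda^c$ in slot $c+1$. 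Your bullets instead have boxes crossing over and natures changing (e.g.\ ``$\widetilde\lambda^c$ loses its content-$j$ box'', ``makes that box removable in $\widetilde\lambda^{c+1}$''), which contradicts the lemma as written; the tuples $(B_{ver},B_{hor})$ etc.\ only agree because you simultaneously index $\widetilde\lambda^c$ into slot $c$, the opposite of \S\ref{algo}. Finally, the appeals to Lemma \ref{LL1}/Remark \ref{LLrem} and ``the FLOTW axiom (1)'' are not available here: those require ${\bf s}\in\mathcal{S}_e^l$ and condition (1) of Proposition \ref{flotwdef}, whereas the present lemma only assumes $\ulambda\in\Uglov{e}{\bf s}$ for ${\bf s}$ with $s_c\le s_{c+1}$, so the equal-row-length claim in your second step is unsupported and, in fact, unnecessary once the matching bound above is in hand.
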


\begin{lemma}\label{a3}
  Let ${\bf s}\in \mathbb{Z}^l$  be such that $s_c\leq s_{c+1}$ for $c\in \{1,\ldots,l-1\}$ and let $\ulambda\in \Uglov{e}{\bf s}$. 
  Assume that $\gamma_1$ and $\gamma_2$ are  extended  $\mathfrak{j}$-nodes  in component $c$ and $c+1$ with the same content $j$ so that  $\gamma_2<_{co} \gamma_1$. Assume that 
  $\gamma_1$ and $\gamma_2$ are not well-adapted and that $\gamma_1$ is of nature 
  $R$ (resp. $B_{hor}$) and $\gamma_2$ of nature $B_{ver}$ (resp. $R$). Then 
  the natures of the  extended nodes of content $j$ in component $c$ and $c+1$ in 
$\Psi^e_{{\bf s}\to \sigma_c . {\bf s}}(\ulambda)$  are respectively 
$R$ (resp. $B_{hor}$) and $B_{ver}$ (resp. $R$). 

\end{lemma}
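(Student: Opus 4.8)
The plan is to carry out a direct combinatorial analysis of the box-moving procedure described in \S\ref{algo}, applied to the two extended $\mathfrak{j}$-nodes $\gamma_1$ and $\gamma_2$ of content $j$ in components $c$ and $c+1$. First I would fix notation: write $\gamma_1=(a_1,\lambda^c_{a_1},c)$ and $\gamma_2=(a_2,\lambda^{c+1}_{a_2},c+1)$, and recall that by the hypothesis $s_c\le s_{c+1}$ the relevant algorithm moves boxes \emph{from component $c$ to component $c+1$}. The key point is to track, during the sweep over the rows $a=1,\dots,m$ of $\lambda^c$, exactly what happens to the row containing $\gamma_1$ and to the row of $\lambda^{c+1}$ that receives (or does not receive) its boxes. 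Since $\gamma_1$ has content $j$ and $\gamma_2<_{co}\gamma_1$ has content $j$ as well (same residue, consecutive, $c_1=c_2+1$ in the $<$-order, which is $c+1$ vs $c$ in component indices), the two rows have contents differing by exactly the shift $s_{c+1}-s_c$ in index, and $\gamma_1$ is the unique extended node of content $j$ in $\lambda^c$ while $\gamma_2$ is the unique one in $\lambda^{c+1}$.

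The core of the argument is the contrapositive of the \emph{well-adapted} condition. By Definition \ref{adef}, $\gamma_1,\gamma_2$ fail to be well-adapted precisely when either $n_2<n_1$ (there are strictly more vertical-boundary nodes of $\lambda^c$ above content $j$ than of $\lambda^{c+1}$), or $n_2\ge n_1$ but some $\operatorname{cont}(\gamma_1^i)<\operatorname{cont}(\gamma_2^i)$. In both cases I claim the box-moving procedure, when it reaches the row of $\gamma_1$, has "enough room above" in $\lambda^{c+1}$ that the content-$j$ box in row $a_1$ of $\lambda^c$ stays put (if $Z_1=B_{hor}$, the horizontal boundary of component $c$ is unaffected at content $j$) or gets moved but lands making content-$j$ in component $c+1$ still a horizontal-boundary / removable node as in the source — i.e.\ the pair $Z_1'Z_2'$ is just $Z_1Z_2$, not the $(\diamondsuit)$ alternative. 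Concretely: when $Z_1=R$, $Z_2=B_{ver}$, I would show that failure of well-adaptedness forces the node $\gamma_2'$ selected by the algorithm for the row of $\gamma_1$ to have content $\le j-1$ strictly in a way that leaves row $a_1$ of $\lambda^c$ still ending at content $j$ after the move, so that the content-$j$ node in component $c$ remains of nature $R$; symmetrically for $Z_1=B_{hor},Z_2=R$. The bookkeeping is: the $(\diamondsuit)$ behaviour ($R\leftrightarrow B_{ver}$ swap, or $B_{hor}\leftrightarrow R$ swap, etc.) happens exactly when a box at content $j$ migrates across the component boundary at that row, and that migration is governed by whether the "staircase" of vertical-boundary contents $\gamma_1^i$ dominates that of $\gamma_2^i$ — which is the well-adapted condition.

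I would organize the proof into the three cases listed in Lemma \ref{a2}'s statement mirrored here (the $R/B_{ver}\to R/B_{ver}$ case and the $B_{hor}/R\to B_{hor}/R$ case, plus checking consistency with the table in \S\ref{sig}). For each, I run the algorithm of \S\ref{algo} locally: identify the row index $a_1$ at which $\gamma_1$ sits, and argue by a short induction on $a$ from $1$ to $a_1$ that the multiset of contents already transferred out of rows $1,\dots,a$ of $\lambda^c$ is controlled by the $n_i$ and the $\gamma_\bullet^i$. The failure of well-adaptedness — either a deficiency $n_2<n_1$, or an inversion $\operatorname{cont}(\gamma_1^i)<\operatorname{cont}(\gamma_2^i)$ — is precisely the obstruction to the box at content $j$ in row $a_1$ being the one that crosses over. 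Then one reads off that the content-$j$ nodes in components $c$ and $c+1$ of $\umu$ have the claimed natures. The Remark \ref{fla}/$\tau$-twisting trick (as used repeatedly, e.g.\ in Lemma \ref{LL1}, Lemma \ref{ll1}) handles the $c_1=l$, $c_2=1$ wrap-around case by reducing it to the $c\le l-1$ case with the $+e$ shift.

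The main obstacle I anticipate is the precise bookkeeping of which row of $\lambda^{c+1}$ receives boxes at each step and how the selection rule "$\gamma_k'$ has maximal content among nodes smaller than $\operatorname{cont}(\gamma_k)$, with $a_k$ distinct from previously used indices" interacts with the inequalities $\operatorname{cont}(\gamma_1^i)\ge\operatorname{cont}(\gamma_2^i)$ (or its negation). In essence one must show this selection rule realizes exactly the "staircase domination" comparison of Definition \ref{adef}; proving that equivalence carefully — rather than just on examples — is the technical heart, and it is what makes the statement "elementary using the procedure described in \S\ref{algo}" as the paper asserts, but still requires a careful index-chasing argument over the rows above $\gamma_1$.
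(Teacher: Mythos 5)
Your plan is the same argument the paper has in mind: the paper gives no written proof of Lemma~\ref{a3} (or of Lemma~\ref{a2}), stating only that both are ``quite elementary using the procedure described in \S\ref{algo}'', and your proposal is precisely to unwind that box-moving procedure on the two content-$j$ nodes, using the negation of Definition~\ref{adef} to rule out the $(\diamondsuit)$-type swap. One small caution: you have the $Z_1/Z_2$ labels reversed relative to the reading order of $v_{\mathfrak j}$ (the node on component $c+1$ is $<$-smaller and hence is $Z_1$), and the sentence claiming the selected $\gamma_2'$ has content $\le j-1$ yet ``leaves row $a_1$ still ending at content $j$'' cannot both hold under the rule in \S\ref{algo} as written (selecting content $\text{cont}(\gamma_2')<j$ means the content-$j$ box does migrate out of that row); the correct bookkeeping is that the exchange of row-end contents between components is what the algorithm performs, and the failure of staircase domination is what keeps the pair of natures fixed \emph{positionally} in $v_{\mathfrak j}$ rather than swapping them --- this is the index-chasing you correctly flag as the technical heart.
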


\begin{exa}
Take $(s_1,s_2)=(0,1)$, $e=3$ and $\ulambda=(5.4,3.2.2)$ which is  in $\Uglov{e}{\bf s}$.   The associated extended  Young diagram is:
\vspace{1cm}  
  
  \centerline{\small{
 \Bigg(\  \ \ \ \ytableausetup
{mathmode}\begin{ytableau}
\none  &  \none  & \none  & \none  & \none & \none & {6} & \none[\dots]& \none[\dots] \\
\none  &  { 0} & {1} & {  2}& { 3} & *(lightgray)4  \\
\none & \overline{ 1} & {\bf   0} & 1 & *(lightgray) 2 \\
\overline{ 3}  \\  
\overline{4}  \\
\none[\vdots] \\
 \none[\vdots] \\
\end{ytableau},
\begin{ytableau}
\none  &  \none  & \none & \none & 5 & \none[\dots]& \none[\dots] \\
\none  &  { 1} & 2 & *(lightgray)3\\  
\none 	 & 0 & *(lightgray)1 \\
\none & \overline{1} & {\bf 0}   \\
\overline{3}  \\
\none[\vdots] \\
 \none[\vdots] \\
\end{ytableau},
\Bigg)}}

We take $\gamma_1=(2,2,1)$ which is of nature $B_{hor}$ and  
$\gamma_2=(3,2,2)$ which is of nature $R$. Observe that $\gamma_2 <_{co} \gamma_1$. We here have 
$n_1=n_2=2$ and $\gamma_1^1=(2,4,1)$ with content $2$, $\gamma_1^2=(1,5,1)$ with content $4$,
 $\gamma_2^1=(2,2,2)$ with content $1$, $\gamma_1^2=(1,3,2)$ with content $3$. So $\gamma_1$ and $\gamma_2$ are well-adapted.  Note that $\Psi^e_{{\bf s}\to \sigma_1 . {\bf s}}(\ulambda)$ is $(4.3.2, 4.3)$ with extended Young diagram:

 \vspace{1cm}  
  
  \centerline{\small{
 \Bigg(\  \ \ \ \ytableausetup
{mathmode}
\begin{ytableau}
\none  &  \none  & \none & \none & \none & 6  & \none[\dots]& \none[\dots] \\
\none  &  { 1} & { 2} & { 3} & 4\\  
\none & {0} & 1 & 2\\
\none & \overline{1} & {\bf 0}    \\
\overline{2}  \\
\none[\vdots] \\
 \none[\vdots] \\
\end{ytableau},
\begin{ytableau}
\none  &  \none  & \none  & \none  & \none & {5} & {6} & \none[\dots]& \none[\dots] \\
\none  &  { 0} & {1}  & 2 & 3 \\
\none & \overline{ 1} & {\bf 0} & 1   \\
\overline{ 3}  \\  
\overline{4}  \\
\none[\vdots] \\
 \none[\vdots] \\
\end{ytableau}
\Bigg)}}

We see that the nodes of content $0$ in components $1$ and $2$ are of nature $R$ and $B_{hor}$ which is consistent with the above lemma. 
\end{exa}

\begin{lemma}\label{a0}   Let ${\bf s}\in \mathbb{Z}^l$  and let $\ulambda\in \Uglov{e}{\bf s}$.
 Assume that $\gamma_1=(a_1,b_1,c_1)$ and    $\gamma_2=(a_2,b_2,c_2)$  are well-adapted (extended) $\mathfrak{j}$-nodes in     $\mathcal{E}_{\mathfrak{j}}(\ulambda)$ with $\gamma_2<_{co} \gamma_1$. 
Assume that $\sigma=\tau$ or that $\sigma=\sigma_c$ with $c\in \{1,\ldots,l-1\}$ is such tat $s_c\leq s_{c+1}$. Set 
$\umu=\Psi^e_{{\bf s}\to \sigma .{\bf s}}(\ulambda)$. 
We denote by:
\begin{itemize}
\item $\gamma_1 '$ and  
 $\gamma_2'$ the (extended) nodes in $\mathcal{E}_{\mathfrak{j}}(\umu)$ with the same content as $\gamma_1$ and $\gamma_2$ if $\sigma=\sigma_c$. 
 \item $\gamma_1 '=(a_1,b_1,c_1-1)$ and  
 $\gamma_2'=(a_2,b_2,c_2-1)$ the (extended) nodes in $\mathcal{E}_{\mathfrak{j}}(\umu)$ if $\sigma=\tau$. 
\end{itemize}
 Then $\gamma_1'$ and $\gamma_2'$ are well-adapted extended $\mathfrak{j}$-nodes. 

\end{lemma}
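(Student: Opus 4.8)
The plan is to prove Lemma~\ref{a0} by reducing it to the explicit description of the crystal isomorphisms given in \S\ref{tau2} and \S\ref{algo}, treating the two cases $\sigma=\tau$ and $\sigma=\sigma_c$ separately. The case $\sigma=\tau$ is essentially immediate: by \S\ref{tau2} the bijection $(a,b,c)\mapsto(a,b,c-1)$ on extended and addable nodes preserves the order $>$, the content, and the nature of every node, and it shifts components cyclically by one. So if $\gamma_1,\gamma_2$ are well-adapted with components $c_1,c_2$, their images have components $c_1-1,c_2-1$ (read modulo $l$), the same contents, and the same chains $\gamma_1^i,\gamma_2^i$ of vertical-boundary nodes above them (merely relabelled in component). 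One checks that the inequalities among contents in Definition~\ref{adef} are untouched, and the two subcases ($c_1\in\{1,\dots,l-1\}$ versus $c_1=l$) are permuted among themselves by the cyclic shift exactly as needed (the only delicate point is when $c_1=l$ maps to $c_1-1=l-1$, or when $c_1=1$ maps to the class of $l$, where the extra $+e$ bookkeeping on contents matches the definition of $\tau.{\bf s}$).

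The substantive case is $\sigma=\sigma_c$ with $s_c\le s_{c+1}$. Here only nodes lying in components $c$ and $c+1$ are affected, so if neither $\gamma_1$ nor $\gamma_2$ lies in $\{c,c+1\}$ there is nothing to do, and the cases where exactly one of them lies in $\{c,c+1\}$ need to be handled by noting that the algorithm of \S\ref{algo} only redistributes boxes between components $c$ and $c+1$, hence does not change the vertical-boundary profile of any other component nor the relative contents across the cut. The genuinely new content is when $\{c_1,c_2\}=\{c,c+1\}$, i.e. $\gamma_1$ is in component $c$, $\gamma_2$ in component $c+1$, with the same content $j$; then $\gamma_1',\gamma_2'$ are the extended $\mathfrak j$-nodes of content $j$ in $\umu$ in components $c,c+1$. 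I would run the box-moving procedure of \S\ref{algo} and track, for each $i$, where the vertical-boundary node of content $\operatorname{cont}(\gamma_1^i)$ in $\lambda^c$ and of content $\operatorname{cont}(\gamma_2^i)$ in $\lambda^{c+1}$ end up: the well-adapted hypothesis $\operatorname{cont}(\gamma_1^i)\ge\operatorname{cont}(\gamma_2^i)$ is exactly the condition guaranteeing that in the algorithm the boxes with contents strictly above $\operatorname{cont}(\gamma_2^i)$ and at most $\operatorname{cont}(\gamma_1^i)$ get transferred from the $i$-th relevant part of $\lambda^c$ into the appropriate part of $\lambda^{c+1}$, so that after the move the new vertical-boundary nodes $(\gamma_1')^i,(\gamma_2')^i$ of $\umu$ again satisfy $n_2'\ge n_1'$ and $\operatorname{cont}((\gamma_1')^i)\ge\operatorname{cont}((\gamma_2')^i)$. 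The key auxiliary facts are Lemma~\ref{L1} (so that contents and part-lengths are coherently ordered) and Lemmas~\ref{a2},~\ref{a3} (which already pin down the natures of $\gamma_1',\gamma_2'$ in the well-adapted situation), which I would invoke rather than reprove.

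The main obstacle I anticipate is the careful bookkeeping in the box-moving step: the algorithm of \S\ref{algo} processes the rows of $\lambda^c$ one at a time and each step reads off a partner node $\gamma_r'$ in $\lambda^{c+1}$ of maximal content below a threshold, so one must argue that the chain of vertical-boundary nodes $\gamma_1^i$ above $\gamma_1$ is processed in a way compatible with the chain $\gamma_2^i$ above $\gamma_2$, using that the two original chains are ``interleaved'' in the content order precisely because of the well-adapted inequalities. In other words, the combinatorial core is showing that well-adaptedness is preserved under a single $\sigma_c$-move; I expect this to follow by an induction on the number $n_1$ of vertical-boundary nodes above $\gamma_1$, peeling off the topmost node at each step and checking that the hypothesis is inherited by the truncated configuration. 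The boundary subtlety when $c_1=l$, $c_2=1$ does not arise for $\sigma_c$ with $c\le l-1$, so in the $\sigma_c$ case one only ever deals with the first bullet of Definition~\ref{adef}, which simplifies the argument; one does, however, need to confirm that after the move $\gamma_1',\gamma_2'$ still lie in components $c,c+1$ (not swapped), which is immediate from the formula $\Psi^e_{{\bf s}\to\sigma_c.{\bf s}}(\ulambda)=(\dots,\widetilde\lambda^{c+1},\widetilde\lambda^c,\dots)$ together with the relabelling of components in the target multicharge.
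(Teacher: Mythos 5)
Your proposal takes the same overall route as the paper: the $\sigma=\tau$ case is read off from Proposition~\ref{tau} (the paper calls it trivial), and the $\sigma=\sigma_c$ case is to be verified by running the box-moving procedure of \S\ref{algo}. The paper's own proof is essentially one line ("apply the procedure"), and your sketch fills in, at roughly the same level of rigor, what that verification entails; in that sense you are doing what the paper does.

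There is, however, one concrete misstep in the case analysis. You assert that the boundary subtlety $c_1=l$, $c_2=1$ (the second bullet of Definition~\ref{adef}) "does not arise for $\sigma_c$", and that the cases where exactly one of $\gamma_1,\gamma_2$ lies in $\{c,c+1\}$ are disposed of because the algorithm "does not change the vertical-boundary profile of any other component." Both claims fail in the same configuration: take $c_1=l$, $c_2=1$ and $\sigma=\sigma_{l-1}$ (or $c_1=l$, $c_2=1$ and $\sigma=\sigma_1$). Then $\gamma_1$ (resp. $\gamma_2$) sits in one of the two components that $\sigma_c$ shuffles, so the vertical-boundary chain $\gamma_1^1,\dots,\gamma_1^{n_1}$ (resp. $\gamma_2^1,\dots,\gamma_2^{n_2}$) genuinely changes under the move, and the relevant comparison is the second bullet of Definition~\ref{adef}, not the first. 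This case cannot be waved away, and it does occur in the paper's only use of the lemma (in Lemma~\ref{wa}, where $\sigma$ ranges over a word in the generators including $\tau$-conjugates). So in the "exactly one node in $\{c,c+1\}$" branch you must still run the algorithm of \S\ref{algo} and check that the new chain in the moved component continues to dominate (or be dominated by) the untouched chain, with the extra $\pm e$ shift from the second bullet; the argument you give for that branch does not establish it. Your anticipated induction on $n_1$ for the genuine two-component case is otherwise reasonable, and your treatment of the $\tau$ case is fine.
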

\begin{proof}
Once again, we apply the procedure described in \S \ref{algo} if $\sigma=\sigma_c$ with $c\in \{1,\ldots,l-1\}$. The result is trivial by Proposition \ref{tau} if $\sigma=\tau$. 
\end{proof}

\subsection{Induction and crystal isomorphisms}

\begin{abs}\label{corres}
Let ${\bf s}\in \mathbb{Z}^l$ and 
${\bf s}'\in \mathbb{Z}^l$ be in the same orbit modulo the action of $\widehat{\mathfrak{S}}_l$. We take 
$\ulambda\in \Uglov{e}{\bf s}$ and $\umu:=\Psi^e_{{\bf s}\to {\bf s}'}(\ulambda)$.  
By \cite[\S 4]{JU2}, there is a one to one correspondence between the normal $\mathfrak{j}$-nodes
 of $\ulambda$ and the normal  $\mathfrak{j}$-nodes of $\umu$ (it suffices to consider the case $l=2$ which is treated in {\it op. cit.}, but this is also easily seen using the above algorithm). More precisely,  if we  have 
 $$\omega_{\mathfrak{j}} (\ulambda)=A\ldots A \underbrace{R\ldots R}_{p\text{ times}}$$
 for $p\in \mathbb{Z}_{>0}$. Then 
 $$\omega_{\mathfrak{j}} (\umu)=A\ldots A \underbrace{R\ldots R}_{p\text{ times}}$$
 If we denote by $\gamma_1$, \ldots $\gamma_p$ the normal nodes of $\ulambda$ (written in increasing order) corresponding to the sequence of $R$ above,  
 then  one can canonically associate  to each  $\gamma_j$ 
 a removable normal node of $\umu$  which is denoted by $\gamma_j '$ such that $\gamma_1'$, \ldots, $\gamma_p'$ are the normal removable nodes of $\umu$ written in increasing order. In the following, we say that 
  $\gamma_j$ is {\it associated} to $\gamma_j'$. 

\end{abs}
\begin{lemma}\label{prec}
Under the above hypotheses, let   $s\in \{1,\ldots,p\}$ be such that $s=1$ or such that the node $\gamma_{s-1}$ is not consecutive to the node $\gamma_s$. Denote by 
 $\ulambda'$ (resp. $\umu'$) the $l$-partition obtained by removing the normal nodes associated to $\gamma_s$, \ldots,  $\gamma_p$ (resp. of $\gamma_s'$, \ldots,  $\gamma_p'$). Assume that $\ulambda'\in \Uglov{e}{\bf s}$, then  for all $c\in \{1,\ldots,l-1\}$ such that $s_c\leq s_{c+1}$, 
 we have 
  $\umu'\in \Uglov{e}{\sigma_c.{\bf s}}$ and $\umu' =\Psi^e_{{\bf s}\to \sigma_c . {\bf s}} (\ulambda')$.
\end{lemma}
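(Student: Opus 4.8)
The statement is essentially a compatibility assertion between the crystal isomorphism $\Psi^e_{{\bf s}\to\sigma_c.{\bf s}}$ and the ``truncation'' operation that removes a tail of the normal removable $\mathfrak j$-nodes. The key input is the correspondence of \S\ref{corres} between normal nodes of $\ulambda$ and of $\umu=\Psi^e_{{\bf s}\to{\bf s}'}(\ulambda)$ (with ${\bf s}'={\bf s}$ here), together with the recursive description of crystal isomorphisms recalled in \S\ref{iso}: if $\gamma$ is a good removable $\mathfrak j$-node of $\ulambda$ then $\Psi^e_{{\bf s}\to\sigma_c.{\bf s}}$ commutes with removing $\gamma$ and the corresponding good removable node $\gamma'$ of $\umu$.

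First I would set up the induction. Since $\ulambda'\in\Uglov{e}{\bf s}$ is obtained from $\ulambda$ by removing the normal nodes $\gamma_s,\dots,\gamma_p$, and these are precisely the rightmost block of $R$'s in $\widetilde w_{\mathfrak j}(\ulambda)$ cut off at the gap between $\gamma_{s-1}$ and $\gamma_s$, I would argue that one can remove them one at a time, each time through a \emph{good} removable $\mathfrak j$-node: remove $\gamma_p$ first (it is good removable since it is the leftmost $R$ in the reduced word obtained after deleting the tail beyond it — here the hypothesis that $\gamma_{s-1}$ is not consecutive to $\gamma_s$ guarantees that after removing $\gamma_s,\dots,\gamma_p$ no new cancellations of type $RA$ creep past the gap, so the word structure $A^\ast R^\ast$ is preserved at each intermediate step), then $\gamma_{p-1}$, and so on down to $\gamma_s$. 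At each single step I invoke the recursive definition of $\Psi^e_{{\bf s}\to\sigma_c.{\bf s}}$ from \S\ref{iso}: removing the good removable node and applying the crystal isomorphism commute, and the good removable node that gets removed on the $\umu$-side is exactly the associated node $\gamma_j'$ (this is what the correspondence of \S\ref{corres} encodes — the normal nodes are matched in increasing order and consecutive/non-consecutive patterns are preserved by the isomorphism). Composing these $p-s+1$ commuting steps gives $\Psi^e_{{\bf s}\to\sigma_c.{\bf s}}(\ulambda')=\umu'$ and in particular $\umu'\in\Uglov{e}{\sigma_c.{\bf s}}$.

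The main obstacle, and the step I would spend the most care on, is justifying that each $\gamma_j$ (for $j$ from $p$ down to $s$) is indeed a \emph{good removable} node of the partially-truncated $l$-partition, i.e. that the word $\widetilde w_{\mathfrak j}$ of the intermediate $l$-partitions really has the leftmost surviving $R$ equal to the node we want to remove next. This is where the hypothesis ``$s=1$ or $\gamma_{s-1}$ not consecutive to $\gamma_s$'' is used: it ensures the subword $R_s\cdots R_p$ is a full block of the reduced word $\widetilde w_{\mathfrak j}(\ulambda)$ not glued to anything on its left, so peeling off its elements from the right keeps producing good removable nodes and never disturbs the part of the word coming from $\gamma_1,\dots,\gamma_{s-1}$. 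I would verify this using the explicit effect of removing a normal removable node on the word $w_{\mathfrak j}$ (reading order with respect to $<$, cancellation of factors $RA$), noting that removing a node to the right of the gap can only turn some $B_{hor}/B_{ver}$ letters near that node into $A$ or leave things alone, never create an $R$ to the left of the gap. The transfer of all of this to $\umu$ is then automatic from \S\ref{corres}, since the isomorphism matches the blocks of $R$'s and their consecutivity structure. Finally, to see that $\ulambda'\in\Uglov{e}{\bf s}$ forces the intermediate truncations to lie in $\Uglov{e}{\bf s}$ as well, I note these are obtained from $\ulambda'$ by \emph{adding} good addable $\mathfrak j$-nodes (the reverse of the good-removable steps), which preserves $\Uglov{e}{\bf s}$ by the remark following Definition \ref{uglov}.
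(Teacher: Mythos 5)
There is a genuine gap in your approach, and it shows up right away. You claim that $\gamma_p$ is a good removable $\mathfrak j$-node and propose to peel off $\gamma_p, \gamma_{p-1}, \ldots, \gamma_s$ one at a time, invoking at each step the recursive compatibility of $\Psi^e_{{\bf s}\to\sigma_c.{\bf s}}$ with removal of the good removable node. But in the paper's conventions (\S\ref{ra}: the word $w_{\mathfrak j}$ is read in \emph{increasing} order of $<$ and the good removable node is the \emph{leftmost} $R$ in $\widetilde w_{\mathfrak j}=A^\ast R^\ast$), the good removable node is the \emph{smallest} normal removable node, i.e.\ $\gamma_1$, not $\gamma_p$. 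The nodes you are removing, $\gamma_s,\ldots,\gamma_p$, are the \emph{largest} normal removable nodes, so $\ulambda\mapsto\ulambda'$ is not a composition of crystal operators $\widetilde e_{\mathfrak j}$, and the recursive description of the crystal isomorphism from \S\ref{iso} simply does not apply to this truncation. The later claim that re-adding $\gamma_p,\ldots,\gamma_s$ to $\ulambda'$ goes through good \emph{addable} nodes is also unsupported: the good addable node is the rightmost $A$, which in general has nothing to do with the positions of $\gamma_s,\ldots,\gamma_p$.

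The paper's proof takes a different, much more concrete route: it runs the explicit box-moving algorithm of \S\ref{algo} directly on $\ulambda'$ and checks that the output coincides with $\umu'$. The only interaction between that algorithm and the removed nodes happens at a content $j$ where both component $c$ and component $c+1$ have a removable $\mathfrak j$-node; such a pair is consecutive in $<_{co}$, and the hypothesis ``$s=1$ or $\gamma_{s-1}$ not consecutive to $\gamma_s$'' rules out the problematic situation where exactly one of the pair lies in the removed tail $\{\gamma_s,\ldots,\gamma_p\}$. If you want to keep your structural viewpoint, you would have to replace the good-removable-node argument by an argument at the level of the algorithm of \S\ref{algo} (or at the very least, work with $\gamma_1,\ldots,\gamma_p$ from the small end and then re-add $\gamma_1,\ldots,\gamma_{s-1}$ as good addable nodes, which is a genuinely different sequence of operations from what you wrote). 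As it stands the central step of your plan cannot be carried out.
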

\begin{proof}
We have to perform the  algorithm of \S \ref{algo}. We keep the 
 notations of the lemma. We have to check what happen on the components $c$ and $c+1$ 
  of  $\ulambda$ and $\umu$ and then check that  $\umu' =\Psi^e_{{\bf s}\to \sigma_c . {\bf s}} (\ulambda')$.
   This will imply that $\umu'\in \Uglov{e}{\sigma_c. {\bf s}}$ as $\ulambda'\in \Uglov{e}{\bf s}$.

  First note that two consecutive nodes on components $c$ and $c+1$ have always the same content $j$.
   So we can just focus on nodes of such content.  
\begin{itemize}
\item if there is only one removable normal node with content $j$, then the result is clear,
\item Assume that we have one removable node of content $j$ in each of the components. Performing the  
 algorithm, we see that the only problem may happen when one of the removable node   associated to $\{\gamma_s,\ldots,\gamma_p\}$
 is on the component $c$ and the removable node in the component $c+1$
 is not in $\{\gamma_s,\ldots,\gamma_p\}$. However, this case is impossible because two such nodes are consecutive and if 
 the node in the component $c$ is normal then so is the node in component $c+1$.  

\end{itemize}

\end{proof}

\begin{lemma}\label{prec2}
Under the above hypotheses, let   $t\in \{1,\ldots,p\}$ be such that $t=1$ or such that the node $\gamma_{t-1}$ and $\gamma_t$ are consecutive nodes with the same content but which are not well-adapted. Denote by 
 $\ulambda'$ (resp. $\umu'$) the $l$-partition obtained by removing the normal nodes associated to $\gamma_t$, \ldots,  $\gamma_p$ (resp. of $\gamma_t'$, \ldots,  $\gamma_p'$). Assume that $\ulambda'\in \Uglov{e}{\bf s}$, then  for all $c\in \{1,\ldots,l-1\}$ such that $s_c\leq s_{c+1}$,  
 we have 
  $\umu'\in \Uglov{e}{\sigma_c.{\bf s}}$ and $\umu' =\Psi^e_{{\bf s}\to \sigma_c . {\bf s}} (\ulambda')$.
\end{lemma}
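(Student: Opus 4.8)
The plan is to follow the same strategy as in Lemma \ref{prec}, namely to run the algorithm of \S \ref{algo} for the crystal isomorphism $\Psi^e_{{\bf s}\to \sigma_c . {\bf s}}$ and check that removing the normal nodes associated to $\gamma_t,\ldots,\gamma_p$ commutes with this isomorphism. The key new ingredient is the hypothesis that $\gamma_{t-1}$ and $\gamma_t$ are consecutive nodes with the \emph{same content} $j$ which are \emph{not} well-adapted: by the definition of $<_{co}$ and the fact that consecutive nodes on components $c$ and $c+1$ share a content, this forces $\gamma_{t-1}$ and $\gamma_t$ to lie in components $c+1$ and $c$ respectively (in the notation of Definition \ref{adef}, $\gamma_2 = \gamma_{t-1}$ on component $c+1$, $\gamma_1 = \gamma_t$ on component $c$), and the failure of well-adaptedness is exactly the condition under which Lemma \ref{a3} applies rather than Lemma \ref{a2}.

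First I would reduce, as in the proof of Lemma \ref{prec}, to analysing the nodes of a fixed content $j$ in components $c$ and $c+1$, since these are the only nodes whose nature can be affected by $\Psi^e_{{\bf s}\to \sigma_c.{\bf s}}$, and two consecutive such nodes always share their content. For any content $j'$ other than the content $j$ of $\gamma_{t-1},\gamma_t$, the argument of Lemma \ref{prec} applies verbatim: either there is a single removable normal node of content $j'$ (clear), or there is one in each component and they must both be inside or both be outside $\{\gamma_t,\ldots,\gamma_p\}$ because normality propagates between two consecutive nodes, so no obstruction arises. The only genuinely new case is the content $j$ itself, where $\gamma_t$ (component $c$, say of nature $R$ or $B_{hor}$) and $\gamma_{t-1}$ (component $c+1$, of nature $B_{ver}$ or $R$) are not well-adapted and $\gamma_t \in \{\gamma_t,\ldots,\gamma_p\}$ while $\gamma_{t-1} \notin \{\gamma_t,\ldots,\gamma_p\}$ (by choice of $t$).

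The heart of the proof is then to invoke Lemma \ref{a3}: since $\gamma_t$ and $\gamma_{t-1}$ are not well-adapted, after applying $\Psi^e_{{\bf s}\to \sigma_c.{\bf s}}$ the nature of the node of content $j$ in component $c$ is again $R$ (resp. $B_{hor}$) and the nature of the node of content $j$ in component $c+1$ is again $B_{ver}$ (resp. $R$); in particular the removable node of content $j$ stays in component $c$. Running the box-moving procedure of \S \ref{algo} on $\ulambda$ and separately on $\ulambda'$ with the nodes of content $j$ handled in this way shows that deleting the normal nodes associated to $\gamma_t,\ldots,\gamma_p$ and then applying $\Psi^e_{{\bf s}\to \sigma_c.{\bf s}}$ yields the same $l$-partition as first applying $\Psi^e_{{\bf s}\to \sigma_c.{\bf s}}$ and then deleting $\gamma_t',\ldots,\gamma_p'$; i.e. $\umu' = \Psi^e_{{\bf s}\to \sigma_c.{\bf s}}(\ulambda')$. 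Since $\ulambda'\in\Uglov{e}{\bf s}$ by hypothesis and $\Psi^e_{{\bf s}\to \sigma_c.{\bf s}}$ is a bijection onto $\Uglov{e}{\sigma_c.{\bf s}}$, this gives $\umu'\in\Uglov{e}{\sigma_c.{\bf s}}$.

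The main obstacle I anticipate is verifying carefully that the box-moving algorithm of \S \ref{algo}, applied to $\ulambda$ restricted to contents $\le j$ and to $\ulambda'$, produces compatible outputs — that is, that deleting the higher normal nodes before running the algorithm does not disturb how the algorithm treats the nodes of content $j$. This is where the precise choice of $t$ (so that $\gamma_{t-1}$ is \emph{not} consecutive-and-well-adapted to $\gamma_t$, hence the $R$-node of content $j$ genuinely stays put under the isomorphism by Lemma \ref{a3} rather than jumping components as in Lemma \ref{a2}) is essential, and it is the bookkeeping of which boxes get moved where that requires care rather than any new idea. The case $\sigma_c = \tau$ does not arise here since the statement is only for $\sigma_c$ with $s_c\le s_{c+1}$, so no separate argument is needed.
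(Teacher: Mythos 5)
Your plan is correct and follows the same route as the paper: the paper's proof is a one-sentence reduction to Lemma \ref{prec} plus the observation that the algorithm of \S\ref{algo} behaves well on the non-well-adapted pair $(\gamma_{t-1},\gamma_t)$, and your invocation of Lemma \ref{a3} is exactly the precise form of that observation (the natures of the two content-$j$ nodes in components $c$ and $c+1$ are unchanged by $\Psi^e_{{\bf s}\to\sigma_c.{\bf s}}$, so the removable node stays put and deletion commutes with the isomorphism).

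Two small points of tidying, neither of which is a gap: (i) you should make explicit that when the pair $(\gamma_{t-1},\gamma_t)$ does \emph{not} lie in the components $c,c+1$ that the fixed $\sigma_c$ acts on, the argument of Lemma \ref{prec} applies verbatim, so only the case where they do lie there is genuinely new (this is also what the paper singles out); (ii) the aside about ``the case $\sigma_c=\tau$'' is a non-issue — $\tau$ is a different generator and the statement concerns only the transpositions $\sigma_c$, so nothing needs to be said about $\tau$ here.
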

\begin{proof}
We argue as in the proof of lemma \ref{prec}. However, we have to see what happen now if we $\gamma_t$ and $\gamma_{t-1}$ are in component $c$ and $c+1$ of the $l$-partition. Performing the algorithm of \S \ref{algo}, we see that the fact that the two nodes are not well-adapted implies the result. 

\end{proof}

\begin{lemma}\label{wa}
Let ${\bf s}\in \mathcal{S}_e^l$ and let $\ulambda\in \Uglov{e}{\bf s}$. Let $\gamma_1 <\gamma_2$ be two consecutive normal removable $\mathfrak{j}$-nodes with the same content. Assume that they are well-adapted. Let ${\bf s'}\in \mathbb{Z}^l$ be in the orbit of ${\bf s}$ modulo the action of $\widehat{\mathfrak{S}}_l$ and consider the normal removable  nodes associated to $\gamma_1$  and $\gamma_2$ in $\umu:=\Psi^e_{{\bf s}\to  {\bf s}'} (\ulambda)$. Then there cannot be any addable node between these two nodes.

\end{lemma}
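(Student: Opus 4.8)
The plan is to reduce to the case where ${\bf s}'=\sigma_c.{\bf s}$ or ${\bf s}'=\tau.{\bf s}$, using the reduction described in \S\ref{red}: every crystal isomorphism $\Psi^e_{{\bf s}\to{\bf s}'}$ factors as a composition of such elementary isomorphisms (applied always in the regime $s_c\le s_{c+1}$ or $s_l\le s_1+e$). By Lemma \ref{a0}, the property of being well-adapted is preserved under each elementary step (with $\gamma_1,\gamma_2$ replaced by the associated nodes $\gamma_1',\gamma_2'$, which by \S\ref{corres} are again consecutive normal removable nodes with the same content). So it suffices to prove the statement for a single elementary isomorphism, and then induct along the factorization: if no addable node sits between the images of $\gamma_1,\gamma_2$ after the first step, and these images are again well-adapted consecutive normal removable $\mathfrak{j}$-nodes with the same content, we may iterate.

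For the case $\sigma=\tau$, Proposition \ref{tau} shows $\Psi^e_{{\bf s}\to\tau.{\bf s}}$ merely cyclically permutes components via $(a,b,c)\mapsto(a,b,c-1)$, preserving order, content and nature (cf. \S\ref{tau2}); so the statement is immediate — there was no addable node between $\gamma_1$ and $\gamma_2$ in $\ulambda$ because they are \emph{consecutive} in $\mathcal{E}_{\mathfrak j}(\ulambda)$, and this is transported verbatim. The real content is the case $\sigma=\sigma_c$ with $s_c\le s_{c+1}$, where only pairs of comparable nodes in components $c,c+1$ can change. Here I would argue as follows. Since $\gamma_1<\gamma_2$ are consecutive with equal content $j$, the pair $(Z_1,Z_2)$ of their natures in $v_{\mathfrak j}(\ulambda)$ is $(B_{hor},R)$ — because both are removable normal nodes of the \emph{same} content, so in $v_{\mathfrak j}(\ulambda)$ they appear as a pair of letters at positions of content $j$, and the only pattern that produces two removable nodes of content $j$ on components $c$ and $c+1$ that are consecutive is the one listed with $Z_1=B_{hor}$, $Z_2=R$ giving two $R$'s (alternatively, I will need to check the short case-analysis of which $(Z_1,Z_2)$ can yield consecutive normal removable nodes sharing a content, possibly also accounting for a $B_{hor}/B_{hor}$ predecessor). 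Applying Lemma \ref{a2} with the well-adapted hypothesis, the natures of the two content-$j$ nodes on components $c,c+1$ in $\umu$ become $R$ and $B_{hor}$ (or the corresponding variant), i.e. still one removable node in each relevant component and, crucially, \emph{no} $A$ among them; one then checks — again from the algorithm of \S\ref{algo} — that these two nodes remain consecutive in $\mathcal{E}_{\mathfrak j}(\umu)$, hence no addable node can slip between them.

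The main obstacle I anticipate is bookkeeping of which nodes the removable normal nodes are ``associated'' to after applying $\sigma_c$, and verifying that ``consecutive and well-adapted with equal content'' is genuinely stable — i.e. that the move does not split the pair by inserting a content-$j$ node of nature $A$ or $B_{ver}$ between them on component $c$ or $c+1$. This is exactly controlled by the well-adapted inequalities $\operatorname{cont}(\gamma_1^i)\ge\operatorname{cont}(\gamma_2^i)$ (resp. the $-e$-shifted version), which force the box-moving procedure of \S\ref{algo} to act ``in parallel'' on the two components without creating a new addable node in between; Lemmas \ref{a2}, \ref{a3} and \ref{a0} are precisely designed to package this. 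So the proof is: (i) reduce to elementary isomorphisms via \S\ref{red}; (ii) dispose of $\tau$ by Proposition \ref{tau}; (iii) for $\sigma_c$, pin down the nature pattern of $(\gamma_1,\gamma_2)$, apply Lemma \ref{a2} to get the new natures, and read off from \S\ref{algo} that no addable node appears between them; (iv) use Lemma \ref{a0} together with \S\ref{corres} to see the hypotheses persist, and iterate along the factorization.
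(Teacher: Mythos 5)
Your proposal starts with the right setup — factor $\Psi^e_{{\bf s}\to{\bf s}'}$ into elementary steps as in \S\ref{red}, dispose of $\tau$ via Proposition~\ref{tau}, and track what $\sigma_c$ does using the table of \S\ref{sig} and Lemmas~\ref{a2}, \ref{a3}, \ref{a0}. But there are two real problems in the core of step (iii).

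First, a concrete error: both $\gamma_1$ and $\gamma_2$ are \emph{normal removable} $\mathfrak{j}$-nodes, so the pair of natures $(Z_1,Z_2)$ read off from $v_{\mathfrak{j}}(\ulambda)$ is $(R,R)$, not $(B_{hor},R)$. The table sends $(R,R)$ to $(R,R)$, so looking only at $\gamma_1,\gamma_2$ themselves tells you essentially nothing.

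Second, and more fundamentally, the claim that the associated nodes $\gamma_1',\gamma_2'$ ``remain consecutive in $\mathcal{E}_{\mathfrak{j}}(\umu)$'' is exactly the thing that can fail. The correspondence of \S\ref{corres} is positional (the $k$th normal removable node goes to the $k$th), not content-preserving. For instance, if the node $\eta>_{co}\gamma_2$ has nature $B_{hor}$ and $\sigma_c$ acts on the pair $(\gamma_2,\eta)$, the transition $(R,B_{hor})\to(B_{hor},R)$ marked $(\diamondsuit)$ can occur, after which $\gamma_2'$ sits further out and the gap between $\gamma_1'$ and $\gamma_2'$ is nonempty. Your proof never rules this out, so it cannot conclude ``hence no addable node can slip between them.'' The paper's proof does not claim consecutivity persists. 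It argues by contradiction instead: by the table, the \emph{only} way an $A$ can appear in the middle is through the transition $(B_{ver},B_{hor})\to(R,A)$; Lemmas~\ref{a0}, \ref{a2}, \ref{a3} show every consecutive pair between the images of $\gamma_1$ and $\gamma_2$ stays well-adapted throughout the composition; and Lemma~\ref{a2} forbids well-adapted $(B_{ver},B_{hor})$ from becoming $(R,A)$. That is the idea your proposal is missing: it is the nodes \emph{between} $\gamma_1'$ and $\gamma_2'$ whose natures you must control, and the well-adaptedness hypothesis is what controls them, via the fact that it propagates along the factorization (Lemma~\ref{a0}).
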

\begin{proof}
Write ${\bf s}'=\sigma.{\bf s}$ with $\sigma\in \widehat{\mathfrak{S}}_l$.
We write $\sigma$ as a product of generators $\sigma_c$ with $c\in \{1,\ldots,l-1\}$ and $\tau$ satisfying the conditions  in \S \ref{red}. Then 
$\Psi^e_{{\bf s}\to  {\bf s}'}$ is a composition of isomorphisms 
$\Psi^e_{{\bf v}\to  \sigma_c. {\bf v}}$ (with then $v_c\leq v_{c+1}$)  and $\Psi^e_{{\bf v}\to  \tau. {\bf v}}$ 
  (with then $v_l\leq v_1+e$). 

Assume that we have an addable node between the normal removable  nodes associated to $\gamma_1$  and $\gamma_2$ in $\umu:=\Psi^e_{{\bf s}\to  {\bf s}'} (\ulambda)$.  By the table in \S \ref{sig}, we see that this is possible only if two consecutive nodes between nodes associated to $\gamma_1$ and $\gamma_2$ of nature $B_{ver}$ and $B_{hor}$ are transformed into nodes of nature $R$ and $A$ after application of a crystal isomorphism as above. Now combining Lemma \ref{a2}, Lemma \ref{a0} and Lemma \ref{a3}, we see that the nodes between nodes associated to $\gamma_1$ and $\gamma_2$ must be always well-adapted. By Lemma \ref{a2} again, well-adapted nodes of nature  $B_{ver}$ and $B_{hor}$ cannot be  transformed into nodes of nature $R$ and $A$. So this is a contradiction.

\end{proof}

\section{Proof of the main result}

We are now ready to give the proof of our main Theorem (which is Theorem \ref{main}). We begin with one implication of the main theorem. 

\subsection{Direct implication}

\begin{Prop}\label{direct}  Assume that $e\in \mathbb{Z}_{>1}$ and ${\bf s}=(s_1,\ldots,s_l)\in \mathbb{Z}^l$.
Assume that $\ulambda$ is a staggered $l$-partition with respect to $(e,{\bf s})$ then 
 $\ulambda$ is an Uglov $l$-partition with respect to $(e,{\bf s})$.

\end{Prop}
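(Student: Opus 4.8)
The plan is to mimic, in full generality, the argument already carried out for FLOTW multipartitions, transporting it through the crystal isomorphisms $\Psi^e_{{\bf s}\to{\bf s}'}$ studied in Sections~\ref{isosec}--\ref{corres}. Concretely, I would argue by induction on the rank $n$ of $\ulambda$. If $n=0$ the statement is trivial, so assume $\ulambda$ is a staggered $l$-partition of rank $n>0$; by definition there is $\mathfrak{j}\in\mathbb{Z}/e\mathbb{Z}$ and a staggered sequence $(\gamma_i)_{i\in\mathbb{Z}_{>0}}$ of extended $\mathfrak{j}$-nodes, and after deleting the removable $\mathfrak{j}$-nodes of this sequence we obtain a staggered $l$-partition $\ulambda'$ of smaller rank, which by induction lies in $\Uglov{e}{{\bf s}}$. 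It then suffices to show that $\ulambda$ itself lies in $\Uglov{e}{{\bf s}}$, i.e. that $\ulambda$ is obtained from $\ulambda'$ by successively adding good addable $\mathfrak{j}$-nodes.

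The key reduction is to move to the FLOTW chamber. Pick ${\bf s}''\in\mathcal{S}_e^l$ in the $\widehat{\mathfrak{S}}_l$-orbit of ${\bf s}$ and write the isomorphism $\Psi^e_{{\bf s}''\to{\bf s}}$ as a composition of the elementary isomorphisms $\Psi^e_{{\bf v}\to\sigma_c.{\bf v}}$ (with $v_c\le v_{c+1}$) and $\Psi^e_{{\bf v}\to\tau.{\bf v}}$ (with $v_l\le v_1+e$), as allowed by \S\ref{red}. The point is that the staggered condition is governed by the words $v_{\mathfrak{j}}(\ulambda)$ and the notions of $(1)$- and $(2)$-connectedness, and these interact well with the isomorphisms: by \S\ref{tau2} the map $\Psi^e_{{\bf s}\to\tau.{\bf s}}$ leaves $v_{\mathfrak{j}}$ and the nature of nodes unchanged, while by the table in \S\ref{sig} together with Lemmas~\ref{a2}, \ref{a3} and \ref{a0}, the map $\Psi^e_{{\bf s}\to\sigma_c.{\bf s}}$ only alters the nature of comparable pairs of nodes, and the ``dangerous'' transformation $B_{ver}B_{hor}\rightsquigarrow RA$ requires the pair to fail to be well-adapted. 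Using Lemma~\ref{prec}, Lemma~\ref{prec2} and Lemma~\ref{wa}, one checks that the removable $\mathfrak{j}$-nodes of the staggered sequence of $\ulambda$ correspond, under the isomorphism, to a collection of removable $\mathfrak{j}$-nodes of $\unu:=\Psi^e_{{\bf s}\to{\bf s}''}(\ulambda)$ that form the removable part of a word $v_{\mathfrak{j}}(\unu)=X_1XX_2$ with $X\in\{B_{ver},A\}$ and no $(1)$- or $(2)$-connection between an $R$ in $X_1$ and an $R$ in $X_2$; equivalently, removing them yields $\Psi^e_{{\bf s}\to{\bf s}''}(\ulambda')$. Then by Proposition~\ref{princ0} (applied to $\unu$, which is FLOTW since ${\bf s}''\in\mathcal{S}_e^l$) one concludes $\Psi^e_{{\bf s}\to{\bf s}''}(\ulambda')\in\Uglov{e}{{\bf s}''}$, and by the ``strong form'' Proposition~\ref{convA} one adds the corresponding addable nodes back to recover $\unu\in\Uglov{e}{{\bf s}''}$; pulling back through $\Psi^e_{{\bf s}''\to{\bf s}}$ gives $\ulambda\in\Uglov{e}{{\bf s}}$.

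I expect the main obstacle to be the bookkeeping in the induction step, namely proving that the removable nodes of a staggered sequence of $\ulambda$ are carried by $\Psi^e_{{\bf s}\to{\bf s}''}$ to the removable nodes of a staggered-type configuration for $\unu$, and in particular that no addable $\mathfrak{j}$-node is created strictly between consecutive associated removable nodes. This is precisely the content one must assemble from Lemmas~\ref{a2}, \ref{a3}, \ref{a0}, \ref{prec}, \ref{prec2} and \ref{wa}, and it requires a careful case analysis according to whether consecutive nodes of the staggered sequence are ``consecutive'' in the $<_{co}$ sense, lie in the same component with content differing by $e$, or lie in adjacent components with equal content --- and in the last case, whether they are well-adapted. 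A secondary subtlety is handling the order $<$ versus the reversed/asymptotic order of \S\ref{kl}: since the main theorem is stated for arbitrary ${\bf s}$, one should phrase everything in terms of the extended-node order $<$ and the words $v_{\mathfrak{j}}$, and only at the very end invoke the simplified asymptotic description when specializing to Kleshchev multipartitions. Finally, one should double-check that the recursion on rank is compatible with the recursion on $\sum_i k_i$ used in \S\ref{red} to build the isomorphism, so that the two inductions do not clash; I would structure the proof so that the rank induction is the outer one and the isomorphism is treated as a fixed, already-understood object.
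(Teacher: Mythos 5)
Your overall strategy matches the paper's: induct on rank, remove the staggered sequence to get $\ulambda'$ (Uglov by induction), transport to the FLOTW chamber, use Proposition~\ref{convA} to add the corresponding nodes back, and then track the isomorphism via Lemmas~\ref{prec}, \ref{prec2}, \ref{wa}, \ref{a2}, \ref{a3}, \ref{a0} and the table in \S\ref{sig}. However, as written there is a circularity that you must repair: you define $\unu:=\Psi^e_{{\bf s}\to{\bf s}''}(\ulambda)$ and then reason about $v_{\mathfrak{j}}(\unu)$, but $\Psi^e_{{\bf s}\to{\bf s}''}$ is a bijection between the Uglov sets, so it is simply not defined on $\ulambda$ until you have proved that $\ulambda\in\Uglov{e}{\bf s}$ --- which is exactly the conclusion sought. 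The application of Proposition~\ref{princ0} to this $\unu$ to ``conclude $\Psi^e_{{\bf s}\to{\bf s}''}(\ulambda')\in\Uglov{e}{{\bf s}''}$'' is also vacuous: that fact is immediate from the induction hypothesis and the fact that $\Psi$ is a bijection of Uglov sets, so it cannot be the load-bearing step.

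The paper's argument avoids this by working the opposite way around. It starts from $\umu'$, the \emph{preimage} of $\ulambda'$ in the FLOTW chamber (legitimate, because $\ulambda'$ is Uglov by induction), identifies in $\umu'$ the addable $\mathfrak{j}$-nodes that correspond to the $r$ removable nodes of the staggered sequence, applies Proposition~\ref{convA} to show that adding them gives some $\umu\in\Uglov{e}{{\bf s}'}$, and then proves that $\Psi^e_{{\bf s}'\to{\bf s}}(\umu)=\ulambda$ --- this last identification is exactly where Lemmas~\ref{prec}, \ref{prec2}, \ref{wa} are used, one elementary isomorphism at a time. Your sketch gestures at this, but the bookkeeping you defer is precisely where the paper introduces a case split you do not mention: first, the easy case where $v_{\mathfrak{j}}(\ulambda)$ has no $A$ at all (direct good-node argument, no isomorphism needed); and within the hard case, a further split according to whether, at every intermediate stage $\unu'$, one can take $X_1''$ to end with $A$, versus the situation where a pair $B_{ver}B_{hor}$ is converted into $RA$ along the way, in which case Lemma~\ref{wa} guarantees the relevant pair is not well-adapted and Lemma~\ref{prec2} (rather than \ref{prec}) takes over. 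Without that structure, ``one checks'' does not amount to a proof. So: right toolbox, right destination, but you need to reverse the direction of the crystal isomorphism and supply the case analysis that makes the tracking go through.
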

\begin{proof}
We argue by induction on the rank of $\ulambda$. If this rank is $0$, the result is of course true because $\uemptyset$ is an Uglov $l$-partition. 
 Assume thus that $\ulambda$ is a non empty staggered $l$-partition, then
  $\ulambda$ admits a 
  staggered sequence of nodes. Let  
 $(\gamma_1,\ldots,\gamma_r)$ be the removable nodes of this sequence (written in increasing order). We denote by $\mathfrak{j}\in \mathbb{Z}/e\mathbb{Z}$ the common residue of these nodes. Let $\ulambda'$ be the $l$-partition of $n-r$ obtained by removing the nodes 
 $(\gamma_1,\ldots,\gamma_r)$ from $\ulambda$. By induction, this is an Uglov $l$-partition.

Let us first assume that $v_{\mathfrak{j}} (\ulambda)$ has no nodes of nature $A$. It implies that 
 $\gamma_1$ is a good $\mathfrak{j}$-node for $\ulambda$, then $\gamma_2$ is a good removable 
   $\mathfrak{j}$-node for the $l$-partition obtained by removing $\gamma_1$ from  $\ulambda$
 etc ... we conclude that we can remove a sequence of good nodes to obtain an Uglov $l$-partition. This implies that $\ulambda$ is an Uglov $l$-partition.

So let us now assume that  $v_{\mathfrak{j}} (\ulambda)=X_1 A X_2$ where $X_{2}$ is a word in $R$, $B_{ver}$ and $B_{hor}$. 
 In addition we assume there are exactly $r$ letters $R$ in $X_2$ (with thus $r>0$). So we can write 
 $v_{\mathfrak{j}} (\ulambda')=X_1' A X_2'$ where $X_2'$ has exactly $r$ letters $A$ and no letters $R$. 
 
Let $\sigma \in \widehat{\mathfrak{S}}_l$ be such that  ${\bf s}':=\sigma.{\bf s}\in \mathcal{S}^l_e$. 
 Let $\umu'\in\Uglov{e}{{\bf s}'}$  be such that   $\Psi^e_{{\bf s}'\to {\bf s}}   (\umu')=\ulambda' $.
  We write $\sigma$ as a product of the generators $\tau$ and $\sigma_c$ with $c\in \{1,\ldots,l-1\}$ as in \S \ref{red}. Then 
   $\Psi^e_{{\bf s}'\to {\bf s}} $ is a composition of bijections of type
 $\Psi^e_{{\bf v}\to \sigma.{\bf v}} $     where ${\bf v}=(v_1,\ldots,v_l)$ is in the orbit of ${\bf s}$ and 
  $\sigma=\tau$ (with then $v_l\leq v_1+e$) or $\sigma=\sigma_c$ for $c\in \{1,\ldots,l-1\}$ (with then $v_c\leq v_{c+1}$) and we get a sequence of associated Uglov $l$-partitions $\unu'$ by applying successively these maps. 
  
  Regarding the table in \S \ref{sig}, we see that  $v_{\mathfrak{j}} (\unu')$ can be written as $X_1''X_2''$  where $X_1''$ and   $X_2''$ are words in the four letters such that if we delete all the occurrences  
   of $B_{ver}$ and $B_{hor}$ in $X_2''$ and then all the occurrences $RA$ in it, we obtain exactly $r$ 
    letters $A$ (and no letter $R$) corresponding to certain nodes $\gamma_1[\unu']$, \ldots, $\gamma_r[\unu']$. We will now distinguish two cases:

  \begin{itemize}
  \item Assume that for all these Uglov $l$-partitions $\unu'$, we can take 
   $X_1''$ so that the last letter of $X_1''$ is $A$.  
  This is in particular the case for $\unu'=\umu'$. So by Proposition \ref{convA},  the  $l$-partition $\umu$   obtained by adding the $\mathfrak{j}$-nodes $\gamma_1[\umu']$, \ldots, $\gamma_r[\umu']$
 to $\umu'$ is  in $\Uglov{e}{{\bf s}'}$. 
 Assume that $\Psi^e_{{\bf s}'\to \sigma.{\bf s}'}   (\umu')=\unu' $ for $\sigma \in \{\tau,\sigma_c,\ c=1,\ldots,l-1\}$ and consider $\unu$  the $l$-partition obtained by adding the $\mathfrak{j}$-nodes $\gamma_1[\unu']$, \ldots, $\gamma_r[\unu']$. If $\sigma=\tau$, it is clear that $\unu$ is Uglov. 
 Otherwise, consider the Uglov $l$-partition  $\Psi^e_{{\bf s}'\to \sigma.{\bf s}'}   (\umu)$. If we delete the $r$ greatest removable nodes from it, we get $\unu'$ by Proposition  \ref{prec} and this implies that $\unu=
 \Psi^e_{{\bf s}'\to \sigma.{\bf s}'}   (\umu)$. So this is an Uglov $l$-partition. By induction, we obtain that 
 $\Psi^e_{{\bf s}'\to {\bf s}}   (\umu)=\ulambda $
  which shows that $\ulambda$ is Uglov.

 \item Otherwise, this means that to get from the FLOTW $l$-partition $\umu'$ to $\ulambda'$, two consecutive nodes of  natures $B_{ver}$ and $B_{hor}$ has been transformed into associated nodes  of nature, respectively,  $R$ and $A$ (this is the only way to ``create'' an addable node after a crystal isomorphism regarding \S \ref{sig}). We begin by  arguing as above : 
 we can now write  $\widetilde{w}_{\mathfrak{j}} (\umu')=\widetilde{X}_1' \widetilde{X}_2'$ 
  as above. 
 By Proposition \ref{convA}, if we consider the $l$-partition obtained by adding the addable $\mathfrak{j}$-nodes $\gamma_1[\umu']$, \ldots, $\gamma_r[\umu']$ to $\umu'$, we obtain an $l$-partition $\umu$ in $\Uglov{e}{{\bf s}'}$. The problem is now to show that $\Psi^e_{{\bf s}'\to {\bf s}}   (\umu)=\ulambda $. 
      To do this, we argue as in the above case except that we use alternatively Lemma \ref{prec} and Lemma \ref{prec2} : 
for any multipartition $\unu'$ as above, if the $r$th normal removable greatest $\mathfrak{j}$-node is consecutive to the  $r+1$th removable greatest $\mathfrak{j}$-node (that is, in the case where we cannot use Proposition \ref{prec}), then both nodes are not well-adapted by Proposition \ref{wa}  and we can use proposition \ref{prec2}. We thus obtain  that $\Psi^e_{{\bf s}'\to {\bf s}}   (\umu)=\ulambda $
  which shows that $\ulambda$ is Uglov. 
  
  \end{itemize}

\end{proof}

\subsection{Reversed implication}

\begin{lemma}\label{pt}
Let  ${\bf s}=(s_1,\ldots,s_l)\in \mathbb{Z}^l$ and assume that $\ulambda\in\Uglov{e}{\bf s}$.
 Assume that for $\mathfrak{j}\in \mathbb{Z}/e\mathbb{Z}$, $\ulambda$ admits a 
  staggered sequence of $\mathfrak{j}$-nodes.  Assume that the sequence has $r$ removable nodes. 
Let  ${\bf s}'\in \mathcal{S}_e^l$ 
 be in the orbit of ${\bf s}$ modulo the action of $\widehat{\mathfrak{S}}_l$. We set 
  $\umu:=\Psi^e_{{\bf s}\to  {\bf s}'}(\ulambda)\in\Uglov{e}{{\bf s}'}$. Then 
we can remove at least $r$ removable $\mathfrak{j}$-nodes from $\umu$ and if we remove the $r$ greatest ones, the resulting $l$-partition is in $\Uglov{e}{{\bf s}'}$.

\end{lemma}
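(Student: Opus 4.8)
The approach is the same as for Proposition \ref{direct}: everything is translated into statements about the words $v_{\mathfrak{j}}$ of \S\ref{exte}, these words are followed along a factorisation of the crystal isomorphism into the elementary pieces of \S\ref{red}, and the description of FLOTW $l$-partitions (Proposition \ref{princ0} and the remark following Proposition \ref{princ}) is applied at the end.

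I would start by reformulating. That $\ulambda$ admits a staggered sequence of $\mathfrak{j}$-nodes with exactly $r$ removable nodes means exactly that $v_{\mathfrak{j}}(\ulambda)=X_1XX_2$ with $X\in\{A,B_{ver}\}$ and $X_2$ a suffix --- which necessarily ends with the infinite block $Y_2$ of letters $B_{hor}$ --- that contains no letter $A$ and exactly $r\ge 1$ letters $R$: here $X$ is the node lying immediately below the first node of the sequence, and the three conditions defining a staggered sequence translate precisely into these requirements. As $X_2$ is a suffix without any letter $A$, the cancellation of the factors $RA$ that produces $\widetilde w_{\mathfrak{j}}(\ulambda)$ from $w_{\mathfrak{j}}(\ulambda)$ (see \S\ref{ra}) does not touch its $r$ letters $R$; hence these $r$ removable $\mathfrak{j}$-nodes are the $r$ greatest removable $\mathfrak{j}$-nodes of $\ulambda$, they are all normal, and $\widetilde w_{\mathfrak{j}}(\ulambda)=A^pR^q$ with $q\ge r$. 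By the order-preserving bijection of \S\ref{corres} between the normal removable $\mathfrak{j}$-nodes of $\ulambda$ and those of $\umu$, one has $\widetilde w_{\mathfrak{j}}(\umu)=A^pR^q$ with the same $q\ge r$, so $\umu$ has at least $r$ removable $\mathfrak{j}$-nodes; write $\gamma_1<\dots<\gamma_r$ for the $r$ greatest removable $\mathfrak{j}$-nodes of $\ulambda$ and $\gamma_1'<\dots<\gamma_r'$ for the associated normal removable $\mathfrak{j}$-nodes of $\umu$.

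Next I would decompose $\Psi^e_{{\bf s}'\to {\bf s}}$ as a composition of elementary isomorphisms $\Psi^e_{{\bf v}\to\tau.{\bf v}}$ (with $v_l\le v_1+e$) and $\Psi^e_{{\bf v}\to\sigma_c.{\bf v}}$ (with $v_c\le v_{c+1}$), following \S\ref{red}, so that $\umu$ and $\ulambda$ are joined by a chain of such maps, and follow the factorisation $v_{\mathfrak{j}}=X_1XX_2$ along this chain. The $\tau$-steps do nothing, by \S\ref{tau2}. For each $\sigma_c$-step one reads the transformation table of \S\ref{sig}, using Lemmas \ref{a0}, \ref{a2} and \ref{a3} exactly as in the two sub-cases of the proof of Proposition \ref{direct}: only pairs of comparable nodes on two consecutive components are affected, the only transformations that could insert a letter $A$ into the suffix $X_2$ or turn the letter $X$ into a $B_{hor}$ are the distinguished ones marked $(\diamondsuit)$, and the well-adaptedness of the relevant pairs of consecutive normal removable nodes associated with $\gamma_1,\dots,\gamma_r$ --- propagated by Lemma \ref{a0} and controlled by Lemma \ref{wa} --- rules these transformations out. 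One thus obtains that $v_{\mathfrak{j}}(\umu)=X_1'X'X_2'$ with $X'\in\{A,B_{ver}\}$ and $X_2'$ a suffix whose $r$ letters $R$ are precisely $\gamma_1',\dots,\gamma_r'$; in particular $\gamma_1',\dots,\gamma_r'$ are the $r$ greatest removable $\mathfrak{j}$-nodes of $\umu$.

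Finally, since $\umu\in\Uglov{e}{{\bf s}'}$ is a FLOTW $l$-partition and $X'\in\{A,B_{ver}\}$, Lemmas \ref{l201}, \ref{l2} and \ref{l20} imply that none of the removable nodes among the letters $R$ of $X_1'$ is $(1)$- or $(2)$-connected with one of $\gamma_1',\dots,\gamma_r'$; hence, by Proposition \ref{princ0} in the form of the remark following Proposition \ref{princ}, the $l$-partition obtained from $\umu$ by removing $\gamma_1',\dots,\gamma_r'$ --- that is, its $r$ greatest removable $\mathfrak{j}$-nodes --- still lies in $\Uglov{e}{{\bf s}'}$. The main obstacle is the middle step: showing that the shape of the factorisation $v_{\mathfrak{j}}=X_1XX_2$, and especially the number $r$ of letters $R$ in the suffix, is stable under each elementary isomorphism. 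This requires the delicate combinatorics of \S\ref{sig} --- the configurations $(\diamondsuit)$ and the notion of a well-adapted pair, through Lemmas \ref{a0}, \ref{a2}, \ref{a3} and \ref{wa} --- and must be handled with the same care as in the proof of Proposition \ref{direct}.
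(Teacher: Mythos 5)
Your overall strategy --- reduce to the FLOTW $l$-partition $\umu$, recognize the $r$ greatest removable $\mathfrak{j}$-nodes of $\ulambda$ as normal, use \S\ref{corres} to identify their images in $\umu$, then apply Proposition \ref{princ0} --- matches the paper's. But the crucial middle step is handled differently, and your sketch has a genuine gap there.

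The paper does \emph{not} attempt to transport the whole factorisation $v_{\mathfrak{j}}=X_1XX_2$ through the chain of elementary isomorphisms. Instead it works directly in $\umu$: it lets $\gamma$ and $\gamma'$ be the $r$-th and $(r+1)$-th greatest removable $\mathfrak{j}$-nodes of $\umu$ and shows by contradiction that they cannot be $(1)$- or $(2)$-connected. The $(2)$-connected case is ruled out by Lemma \ref{wa} (with a $\tau$-twist if the two nodes are not comparable), essentially as you suggest. But the $(1)$-connected case is handled by a completely different mechanism that you omit: if $\gamma$ and $\gamma'$ were $(1)$-connected, Proposition \ref{period} produces a ``period'' in the vertical boundary; transporting that sequence of nodes along the chain and invoking \cite[Prop.~5.1]{JL} (an Uglov $l$-partition carries no period) gives the contradiction. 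Nothing in your argument replaces that step.

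Your own attempt to close the gap --- tracking $X_1XX_2$ letter by letter --- rests on the assertion that ``the only transformations that could insert a letter $A$ into the suffix $X_2$ or turn $X$ into $B_{hor}$ are the distinguished ones marked $(\diamondsuit)$.'' That is false: the transition $B_{ver}\,B_{hor}\to R\,A$ in the table of \S\ref{sig} is \emph{not} one of the $(\diamondsuit)$ cases, yet it is precisely the transition that creates a new $A$ (and a new $R$), and it is this transition that underlies the $(1)$-connected/period phenomenon the paper has to exclude. Because your analysis declares the dangerous transitions to be only the $(\diamondsuit)$ ones and then rules those out by well-adaptedness, it silently lets the $B_{ver}\,B_{hor}\to R\,A$ case through; the well-adaptedness lemmas \ref{a0}, \ref{a2}, \ref{a3} do not by themselves forbid it. So as written the argument does not prove that the shape of the suffix $X_2$ is preserved, and the conclusion from Lemmas \ref{l201}, \ref{l2}, \ref{l20} does not follow. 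To repair this you would have to bring in the period machinery (Proposition \ref{period} and \cite[Prop.~5.1]{JL}) exactly as the paper does.
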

\begin{proof}
We can write 
$v_{\mathfrak{j}} (\ulambda)=X_1AX_2$ where $X_1$ and $X_2$ are two words in 
 $\{A,R,B_{ver},B_{hor}\}$ and $X_2$ has exactly $r$ letters $R$ and no letter $A$. 
 By \S \ref{corres}, as $\ulambda$ has at least $r$ normal removable nodes, this is also the case for $\umu$. 
 If $\umu$ has no other removable  nodes then the result is clear as the removable normal nodes can be successively seen as good nodes. Otherwise, 
  Let $\gamma$ be the $r$th greatest removable normal $\mathfrak{j}$-nodes of $\umu$ and let $\gamma'$ be the $r+1$th removable $\mathfrak{j}$-node.  We will show that $\gamma$ and $\gamma'$ are not $(1)$ or $(2)$ connected so that we can conclude by Lemma \ref{princ0}
  \begin{itemize}
  \item Assume that the nodes are $(1)$-connected. By the definition together with  Proposition \ref{period},
This implies that we are in one of the following two cases : 
\begin{itemize}
\item we have a set of nodes
$(\gamma_1,\ldots,\gamma_e)$ in the vertical boundary of the extended Young diagram of $\ulambda$, with 
$\gamma_k=\gamma$ and $\gamma_{k+1}=\gamma'$
 such that:
 for all $i\in \{1,\ldots,e-1\}\setminus\{k,k+1\}$, we have $\operatorname{cont}(\gamma_{i+1})=\operatorname{cont} (\gamma_i)+1$ and $c_{i+1}\geq c_i$.
Now it is easy to se that, applying any isomorphism  $\Psi^e_{{\bf s}'\to  {\bf s}}$, gives the existence of nodes  $(\widetilde{\gamma}_1,\ldots,\widetilde{\gamma}_e)$ with the same property
  and such that $\widetilde{\gamma}_k$ and $\widetilde{\gamma}_{k+1}$
 are the normal removable nodes associated to $\gamma$ and $\gamma'$. Now if we assume that there exists an addable node between  $\widetilde{\gamma}_k$ and $\widetilde{\gamma}_{k+1}$, this shows the existence of a period for the associated multipartition as defined in Proposition \ref{period} and the next remark. However, it follows from \cite[Prop. 5.1]{JL} that  an Uglov $l$-partition cannot have such a period. 
\item we have a set of nodes
$(\gamma_1,\ldots,\gamma_e)$ in the vertical boundary of the extended Young diagram of $\ulambda$, with 
$\gamma_1=\gamma$ and $\gamma_{e}=\gamma'$
 such that:
 for all $i\in \{1,\ldots,e-2\}$, we have $\operatorname{cont}(\gamma_{i+1})=\operatorname{cont} (\gamma_i)+1$ and $c_{i+1}\geq c_i$. In this case, we have  $\operatorname{cont}(\gamma_{1})=\operatorname{cont} (\gamma_e)-e$
 and $c_1<c_e$. Now, regarding the table in \S \ref{sig},  there must exist ${\bf s}''\in \mathbb{Z}^l$ in the orbit of ${\bf s}$ module $\widehat{\mathfrak{S}}_l$ and  a multipartition $\unu$ such that 
  $\Psi^e_{{\bf s}\to  {\bf s}''} (\ulambda)=\unu$ and such that $\unu$ satisfies the folloowing property. 
   $\unu$ admits a sequence 
  $(\widetilde{\gamma}_1,\ldots,\widetilde{\gamma}_e)$ in the vertical boundary of the extended Young diagram of $\ulambda$, such that  
 for all $i\in \{1,\ldots,e-2\}$, we have $\operatorname{cont}(\widetilde{\gamma}_{i+1})=\operatorname{cont} (\widetilde{\gamma}_i)+1$ and $c_{i+1}\geq c_i$ and such that $\widetilde{\gamma}_1$ is of nature $B_{ver}$. 
  But thus implies that  $\unu$ admits a period as defined in Proposition \ref{period}, this is a contradiction because $\unu$ must be in $\Uglov{e}{{\bf s}''}$. 
\end{itemize}

  \item Assume that the nodes are $(2)$-connected. If these two nodes are comparable (that is with the same content), we use Lemma \ref{wa}. This shows that there cannot be an addable node between the $r$th removable nodes of $\umu$ and the $r+1$th one. This is a contradiction. If they are not comparable, then we apply 
  $\Psi^e_{{\bf s}'\to  \tau .{\bf s}'}$ which makes them comparable and we argue as above.

  \end{itemize}

\end{proof}

The following lemma shows the existence of a staggered sequence of nodes for an Uglov $l$-partition.

\begin{lemma}\label{fi}
Let ${\bf s}\in \mathbb{Z}^l$ and let $\ulambda\in \Uglov{e}{\bf s}$ then there exists a removable $\mathfrak{j}$-node $\gamma$ such that there is no addable $\mathfrak{j}$-node $\eta$ such that $\eta>\gamma$. 
\end{lemma}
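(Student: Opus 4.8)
The statement to prove is Lemma \ref{fi}: for any Uglov $l$-partition $\ulambda \in \Uglov{e}{\bf s}$, there exists $\mathfrak{j}\in\mathbb{Z}/e\mathbb{Z}$ and a removable $\mathfrak{j}$-node $\gamma$ of $\ulambda$ such that no addable $\mathfrak{j}$-node $\eta$ satisfies $\eta>\gamma$. The natural strategy is to reduce to the FLOTW case via the crystal isomorphisms studied in \S\ref{isosec}, where the analogous statement is precisely Proposition \ref{exif}.

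First I would pick $\sigma\in\widehat{\mathfrak{S}}_l$ such that ${\bf s}':=\sigma.{\bf s}\in\mathcal{S}_e^l$, and set $\umu:=\Psi^e_{{\bf s}\to{\bf s}'}(\ulambda)\in\Uglov{e}{{\bf s}'}$, which is a FLOTW $l$-partition. By Proposition \ref{exif}, there exists $\mathfrak{j}\in\mathbb{Z}/e\mathbb{Z}$ and a removable $\mathfrak{j}$-node $\gamma_0$ of $\umu$ such that every element of $\mathcal{E}_{\mathfrak{j}}(\umu)$ greater than $\gamma_0$ is a virtual node of the horizontal boundary; in particular $\gamma_0$ is normal (indeed, reading $v_{\mathfrak{j}}(\umu)$, everything to the right of $\gamma_0$ is $B_{hor}$, so the letter $R$ at $\gamma_0$ survives all the $RA$-cancellations, hence $\gamma_0$ is normal — in fact it is the good removable node). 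Then I would invoke the correspondence of \S\ref{corres}: write $\sigma$ as a product of the generators $\tau$ and $\sigma_c$ (with the sign conditions of \S\ref{red}), so that $\Psi^e_{{\bf s}'\to{\bf s}}$ is a composition of elementary crystal isomorphisms. Under this correspondence $\gamma_0$, being a normal removable $\mathfrak{j}$-node of $\umu$, is associated with a normal removable $\mathfrak{j}$-node $\gamma$ of $\ulambda$; this $\gamma$ is the candidate.

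It remains to check that no addable $\mathfrak{j}$-node of $\ulambda$ is greater than $\gamma$. Since $\gamma_0$ is the \emph{greatest} normal removable $\mathfrak{j}$-node of $\umu$ and there are no addable $\mathfrak{j}$-nodes above it, it is the last letter of the ``reduced'' word $\widetilde{w}_{\mathfrak{j}}(\umu)$ (it equals $A^pR^q$ with $\gamma_0$ the rightmost $R$). By \S\ref{corres}, $\widetilde{w}_{\mathfrak{j}}(\ulambda)$ has the same shape $A^pR^q$, so $\gamma$ is likewise the rightmost $R$ of $\widetilde{w}_{\mathfrak{j}}(\ulambda)$, i.e.\ the good removable $\mathfrak{j}$-node of $\ulambda$. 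Now suppose for contradiction that there is an addable $\mathfrak{j}$-node $\eta$ of $\ulambda$ with $\eta>\gamma$. In the word $w_{\mathfrak{j}}(\ulambda)$ the letter $A$ for $\eta$ stands to the right of the letter $R$ for $\gamma$; since $\gamma$ survives to $\widetilde{w}_{\mathfrak{j}}(\ulambda)$, some $A$ (hence some addable $\mathfrak{j}$-node) lies to its right in $\widetilde{w}_{\mathfrak{j}}(\ulambda)$ as well, contradicting that $\gamma$ is the rightmost surviving letter. Hence no such $\eta$ exists.

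The main obstacle will be making the transfer-of-normality argument fully rigorous: one must be careful that ``greatest normal removable $\mathfrak{j}$-node with nothing addable above it'' is a property visible purely in the words $\widetilde{w}_{\mathfrak{j}}$, and that this property is preserved by the correspondence of \S\ref{corres} (which a priori only matches up the sequences of normal nodes, not the full natures). The cleanest route is to argue entirely at the level of the words $w_{\mathfrak{j}}$, $\overline{w}_{\mathfrak{j}}$, $\widetilde{w}_{\mathfrak{j}}$: Proposition \ref{exif} gives that, for $\umu$, the good removable $\mathfrak{j}$-node is maximal among all addable-or-removable $\mathfrak{j}$-nodes; this says $\widetilde{w}_{\mathfrak{j}}(\umu) = A^p R^q$ ends with an $R$ that is also the last letter of $w_{\mathfrak{j}}(\umu)$ restricted to $\{A,R\}$-letters above it. Transporting via \S\ref{corres} (the shape $A^pR^q$ is preserved, and the last $R$ corresponds to the last $R$), one gets that the good removable $\mathfrak{j}$-node $\gamma$ of $\ulambda$ is the greatest $\{A,R\}$-letter of $w_{\mathfrak{j}}(\ulambda)$ — wait, this last step needs the additional input that no addable node got ``created to the right'' — which is exactly the content one must extract, again, from Lemma \ref{wa} together with the table of \S\ref{sig}. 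So in the end I expect the proof to combine Proposition \ref{exif}, the word-correspondence of \S\ref{corres}, and a short ``no addable node appears to the right'' argument of the type already used in the proof of Proposition \ref{direct}.
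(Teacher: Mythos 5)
Your overall strategy matches the paper's: reduce to the FLOTW case via crystal isomorphisms, invoke Proposition \ref{exif} there, and then transport the conclusion back along the chain of elementary isomorphisms. The paper does exactly this. The difference — and the problem — lies in how the transport is justified.

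The step where you derive a contradiction is incorrect as stated. You write that if there were an addable $\mathfrak{j}$-node $\eta > \gamma$, then ``since $\gamma$ survives to $\widetilde{w}_{\mathfrak{j}}(\ulambda)$, some $A$ lies to its right in $\widetilde{w}_{\mathfrak{j}}(\ulambda)$ as well.'' This is false: an $A$ to the right of the surviving rightmost $R$ in $w_{\mathfrak{j}}$ can perfectly well be cancelled against a \emph{different} $R$ further right, leaving $\widetilde{w}_{\mathfrak{j}} = A^pR^q$ with no $A$ after the last $R$. (Take $w = RRA$: the first $R$ is the good removable node, yet there is an $A$ above it.) So being the good removable node does not, in itself, rule out addable nodes above; that is precisely what needs to be proved for the transported node. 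You notice this yourself towards the end and say one must extract the missing ingredient ``from Lemma \ref{wa} together with the table of \S\ref{sig}''; but Lemma \ref{wa} concerns addable nodes appearing \emph{between} two consecutive well-adapted normal removable nodes, which is not the configuration at hand (there is no normal removable node above $\gamma$). That reference does not close the gap.

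The paper closes it with a different, more direct observation about the algorithm of \S\ref{algo}: a consecutive pair of natures $X\,B_{hor}$ in which the $B_{hor}$ node is \emph{virtual} is always sent to a pair of the form $X\,B_{hor}$ by $\Psi^e_{{\bf v}\to\sigma_c.{\bf v}}$. Starting from $v_{\mathfrak{j}}(\umu) = Z\,R\,Y$ with $Y$ the infinite tail of virtual $B_{hor}$'s (which is what Proposition \ref{exif} gives), one propagates this observation from right to left through the tail at each elementary step, so no $A$ (nor indeed any $B_{ver}\,B_{hor}\to R\,A$ transition) can be created above the $R$, and $v_{\mathfrak{j}}(\ulambda)$ keeps the form $Z'\,R\,Y'$ with $Y'$ in $\{B_{hor},B_{ver}\}$. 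You should replace the normal-node-correspondence argument for the final step by this kind of nature-tracking of the suffix: it is the piece of information that actually travels through the isomorphisms, whereas the reduced word $\widetilde{w}_{\mathfrak{j}} = A^pR^q$ alone does not determine whether there is an $A$ above the last surviving $R$ in $w_{\mathfrak{j}}$.
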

\begin{proof}
 Let $\ulambda\in\Uglov{e}{\bf s}$, we have a multicharge ${\bf s}'\in \mathcal{S}_e^l$ such that ${\bf s}$ and ${\bf s}'$ are in the same orbit modulo the action of the affine extended symmetric group. One can go from the  multipartition $\umu\in\Uglov{e}{{\bf s}'}$ 
 to 
 the multipartition $\ulambda$  using a finite sequence of isomorphims of the type \S \ref{iso}:
 $\Psi^e_{{\bf v}\to \sigma_c . {\bf v}}$ for ${\bf v}=(v_1,\ldots,v_l)$ in the orbit of ${\bf s}$ such that $v_c\leq v_{c+1}$  and $\Psi^e_{{\bf v}\to \tau . {\bf v}}$ for ${\bf v}$ in the orbit of ${\bf s}$.
 
As we have already seen in \S \ref{tau}, the isomorphism $\Psi^e_{{\bf v}\to \tau . {\bf v}}$ does not affect the words   $v_{\mathfrak{j}} (\unu)$ for $\unu\in\Uglov{e}{{\bf v}}$ .  
As $\umu\in\Uglov{e}{{\bf s}'}$, by the above discussion, there exist  $\mathfrak{j}\in \mathbb{Z}/e\mathbb{Z}$ and a $\mathfrak{j}$-node $\gamma$ as above such that 
 all the  extended or addable $\mathfrak{j}$-nodes greater than  $\gamma$ are in fact virtual nodes of the horizontal boundary. In other 
  words, the word  $v_{\mathfrak{j}} (\umu)$ is of the form $ZRY$ where $Z$ is a word in the four letters and $Y$ is 
   the infinite word in $B_{hor}$. Now, it is easy to see that a consecutive pair of nodes 
    of nature $XB_{hor}$ with $B_{hor}$ virtual is always transformed into a pair of nature 
     $XB_{hor}$ after application of the isomorphism of type  $\Psi^e_{{\bf v}\to \sigma_c . {\bf v}}$ if ${\bf v}=(v_1,\ldots,v_l)$  is such that $v_c\leq v_{c+1}$. 
    As a conclusion, using the table in \S \ref{sig},  we see that the word $v_{\mathfrak{j}} (\ulambda)$ is of the form
     $Z'RY'$ where $Z'$ is a word in the four letters and $Y'$ a word in $B_{hor}$ and $B_{ver}$. This concludes the proof.

\end{proof}

\begin{Prop}\label{mainu}
Let  $e\in \mathbb{Z}_{>1}$ and ${\bf s} \in \mathbb{Z}^l$. If $\ulambda$ is an Uglov $l$-partition with respect to $(e,{\bf s})$, then this is a staggered $l$-partition with respect to $(e,{\bf s})$. 
\end{Prop}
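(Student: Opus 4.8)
The plan is to argue by induction on the rank $n$ of $\ulambda$, the case $n=0$ being immediate since $\uemptyset$ is a staggered $l$-partition by definition. Let then $n>0$ and $\ulambda\in\Uglov{e}{\bf s}$. The first step is to produce a staggered sequence of $\mathfrak{j}$-nodes of $\ulambda$ for a suitable $\mathfrak{j}$. By Lemma~\ref{fi} there exist $\mathfrak{j}\in\mathbb{Z}/e\mathbb{Z}$ and a removable $\mathfrak{j}$-node $\gamma$ of $\ulambda$ such that no addable $\mathfrak{j}$-node is greater than $\gamma$; equivalently, $v_{\mathfrak{j}}(\ulambda)$ has the shape $ZRY$ where $Y$ is a word in $\{B_{ver},B_{hor}\}$. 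I would take $\gamma_1$ to be the element of $\mathcal{E}_{\mathfrak{j}}(\ulambda)$ immediately following, for $<$, the largest addable $\mathfrak{j}$-node of $\ulambda$ if such a node exists, and otherwise the least element of $\mathcal{E}_{\mathfrak{j}}(\ulambda)$ that is not a virtual node of the vertical boundary; the sequence $(\gamma_i)_{i>0}$ of consecutive elements of $\mathcal{E}_{\mathfrak{j}}(\ulambda)$ issued from $\gamma_1$ then consists of boundary nodes (there is no addable $\mathfrak{j}$-node $\geq\gamma_1$), contains the removable node $\gamma$, and has a first term satisfying the third requirement, so it is a staggered sequence. Writing $r\geq 1$ for the number of its removable nodes, these are exactly the $r$ greatest removable $\mathfrak{j}$-nodes of $\ulambda$, and they are all normal since nothing addable lies above them. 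Let $\ulambda'$ be obtained from $\ulambda$ by deleting these $r$ nodes; by the definition of a staggered $l$-partition and the induction hypothesis it then suffices to prove that $\ulambda'\in\Uglov{e}{\bf s}$, a staggered sequence of residues of $\ulambda$ being $(\mathfrak{j},\dots,\mathfrak{j})$ ($r$ times) followed by one of $\ulambda'$.

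To prove $\ulambda'\in\Uglov{e}{\bf s}$ I would pass to the FLOTW case. Fix ${\bf s}'\in\mathcal{S}_e^l$ in the $\widehat{\mathfrak{S}}_l$-orbit of ${\bf s}$ and set $\umu:=\Psi^e_{{\bf s}\to{\bf s}'}(\ulambda)\in\Uglov{e}{{\bf s}'}$. By Lemma~\ref{pt} the $l$-partition $\umu^{\circ}$ obtained from $\umu$ by deleting its $r$ greatest removable $\mathfrak{j}$-nodes lies in $\Uglov{e}{{\bf s}'}$. Since $\Psi^e_{{\bf s}\to{\bf s}'}$ is a bijection between the corresponding sets of Uglov $l$-partitions, it is enough to check that $\Psi^e_{{\bf s}\to{\bf s}'}(\ulambda')=\umu^{\circ}$, that is, that the operation of deleting the $r$ greatest removable $\mathfrak{j}$-nodes commutes with $\Psi^e_{{\bf s}\to{\bf s}'}$. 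Decomposing $\Psi^e_{{\bf s}\to{\bf s}'}$ into elementary isomorphisms $\Psi^e_{{\bf v}\to\tau.{\bf v}}$ and $\Psi^e_{{\bf v}\to\sigma_c.{\bf v}}$ (with $v_c\leq v_{c+1}$) as in \S\ref{red}, and using the correspondence of normal nodes of \S\ref{corres} to follow the $r$ nodes to be deleted along the chain, one reduces to a one-step verification: for a $\tau$-step the commutation is immediate from Proposition~\ref{tau} and \S\ref{tau2}; for a $\sigma_c$-step, it follows from Lemma~\ref{prec} when the $r$-th and $(r+1)$-th greatest normal removable $\mathfrak{j}$-nodes of the current Uglov $l$-partition are not consecutive, and from Lemma~\ref{prec2} when they are consecutive of the same content but not well-adapted.

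I expect the main obstacle to be exactly the remaining case in this verification, namely excluding the configuration in which those two nodes are consecutive, of equal content, and well-adapted. Here I would follow the template of the proof of Proposition~\ref{direct}: by Lemmas~\ref{a2}, \ref{a0} and \ref{a3} well-adaptedness of this pair propagates through all the remaining elementary isomorphisms, so Lemma~\ref{wa} would forbid any addable $\mathfrak{j}$-node between the normal removable nodes of $\umu$ corresponding to the pair; combined with the special shape of $v_{\mathfrak{j}}(\ulambda)$ coming from Lemma~\ref{fi} (no addable $\mathfrak{j}$-node above the top $r$ removable nodes) and with the fact, guaranteed by Lemma~\ref{pt}, that the $r$ greatest removable $\mathfrak{j}$-nodes of $\umu$ can be deleted within $\Uglov{e}{{\bf s}'}$, this forces the bad configuration never to arise, so that Lemma~\ref{prec} or Lemma~\ref{prec2} always applies. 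Granting this, $\ulambda'=\Psi^e_{{\bf s}'\to{\bf s}}(\umu^{\circ})\in\Uglov{e}{\bf s}$, the induction closes, and $\ulambda$ is a staggered $l$-partition. The genuinely delicate point, as in Proposition~\ref{direct}, is the bookkeeping of the ``top $r$'' removable nodes and their well-adaptedness through the successive crystal isomorphisms; everything else is a direct assembly of the lemmas already established.
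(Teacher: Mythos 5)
Your proof takes essentially the same route as the paper: Lemma~\ref{fi} to produce the staggered sequence, Lemma~\ref{pt} and passage to a FLOTW multicharge, then the step-by-step commutation of node removal with the elementary isomorphisms via Lemmas~\ref{prec} and \ref{prec2}, with the well-adapted configuration ruled out through Lemmas~\ref{a0}, \ref{a2}, \ref{a3} and \ref{wa}. The paper additionally splits off the easy case where $\ulambda$ has no addable $\mathfrak{j}$-node (handled by observing the removable nodes are successively good), but your unified treatment subsumes it, so the argument is sound.
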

\begin{proof}
 By Lemma \ref{fi}, there exists a removable $\mathfrak{j}$-node $\gamma$ such that there is no addable $\mathfrak{j}$-node $\eta$ such that $\eta>\gamma$. This shows the existence of a staggered sequence of nodes. 
If we remove the removable nodes $\gamma_1<\ldots< \gamma_r$ of the associated   staggered sequence, we obtain an $l$-partition $\ulambda'$. 

Assume that we have no addable $\mathfrak{j}$-node for $\ulambda$. This means that $\gamma_r$ is a good removable node. If we remove it from $\ulambda$, $\gamma_{r-1}$ becomes a good removable node for the resulting $l$-partition which is thus an Uglov $l$-partition. Continuing in this way, we see that  $\ulambda'$ is  an Uglov $l$-partition with respect to $(e,{\bf s})$.

Assume that there exists an addable  $\mathfrak{j}$-node.  
Let  ${\bf s}'\in \mathcal{S}_e^l$ 
 be in the orbit of ${\bf s}$ modulo the action of $\widehat{\mathfrak{S}}_l$. We set 
  $\umu:=\Psi^e_{{\bf s}\to  {\bf s}'}(\ulambda)\in\Uglov{e}{{\bf s}'}$. By Lemma  \ref{pt}
we can remove at least $r$ removable $\mathfrak{j}$-nodes from $\umu$ and if we remove the $r$ greatest ones, the resulting $l$-partition  $\umu'$ is in $\Uglov{e}{{\bf s}'}$. 

We now show that $\Psi^e_{{\bf s}'\to  {\bf s}}(\umu')=\ulambda'$. This will show that $\ulambda'$ is Uglov and we can then conclude by induction. We can write  ${\bf s}=\sigma. {\bf s}'$  where $\sigma\in \widehat{\mathfrak{S}}_l$ is a product of the generators $\tau$ and $\sigma_c$ with $c\in \{1,\ldots,l-1\}$ as in \S \ref{red}. Then 
   $\Psi^e_{{\bf s}'\to {\bf s}} $ is a composition of bijections of type
 $\Psi^e_{{\bf v}\to \sigma.{\bf v}} $     where ${\bf v}=(v_1,\ldots,v_l)$ is in the orbit of ${\bf s}$ and 
  $\sigma=\tau$ (with then $v_l\leq v_1+e$) or $\sigma=\sigma_c$ for $c\in \{1,\ldots,l-1\}$ (with then $v_c\leq v_{c+1}$) and we get a sequence of associated Uglov $l$-partitions $\unu'$ by applying successively these maps. Let 
 $\gamma_1[\unu']<, \ldots, <\gamma_r[\unu']$ be the greatest removable normal $\mathfrak{j}$-nodes of $\unu'$. 
  Then if there exists a normal removable $\mathfrak{j}$-node $\eta$ such that $\eta<_{co}\gamma_1[\unu']$ then these two nodes cannot be well-adapted by Lemma \ref{wa}.  
 We conclude using Lemma \ref{prec2}.

\end{proof}

\section{Two consequences}\label{fkl}

The purpose of this section is to give two consequences of our main Theorem. This concerns mainly the sets of Kleshchev $l$-partitions.  We refer to \cite[Ch.6]{GJ} for details on the representations of quantum groups in affine type $A$ which are needed.

\subsection{Proof of the generalized Dipper-James-Murphy's conjecture}

\begin{abs}
As we have already mentioned, the  Uglov $l$-partitions 
have an interpretation in the context of quantum groups. Recall the notation of \S \ref{qua}. 
Let us  assume that  ${\bf s}=(s_1,\ldots,s_l)\in \mathbb{Z}^l$ is such that 
 $s_{i}-s_{i-1}\geq n-1+e$. The following proposition uses the notion of dominance order on the set of $l$-partitions. We write $\ulambda\unlhd \umu$ if $\ulambda$ and $\umu$ are $l$-partitions of the same integer and for all $c\in \{1,\ldots,l\}$ and $k\in \mathbb{Z}_{>0}$, we have 
 $$\sum_{1\leq j<c} |\lambda^{j}| +\sum_{1\leq i\leq k}\lambda_i^c \leq \sum_{1\leq j<c} |\mu^{j}|+\sum_{1\leq i\leq k}\mu_i^c.$$
\end{abs}

We obtain a proof of a generalization of a conjecture by Dipper, James and Murphy.  The ``only if'' part of the theorem is also proved in \cite[Prop. 5.9]{GL}. Our result allows the proof of the other part. 

\begin{Cor}[Generalized Dipper-James-Murphy's conjecture]\label{djmc}
Let ${\bf s}=(s_1,\ldots,s_l)\in \mathbb{Z}^l$ be  such that 
 $s_{i}-s_{i-1}\geq n-1+e$ for all $i=2,\ldots,l$. Then  $\ulambda$ is an Uglov  $l$-partition (that is a Kleshchev $l$-partition)  if and only if there exist  a sequence of elements   
$\mathfrak{i}_1,\ldots,\mathfrak{i}_n$ in $\mathbb{Z}/e\mathbb{Z}$ and  
integers $c_{\ulambda,\umu}$ for $\umu\in \Pi^l$  such that:
$$f_{\mathfrak{i}_1}\ldots f_{\mathfrak{i}_n}.\uemptyset =\sum_{\umu\in \Pi^l (n)}  c_{\ulambda,\umu} \umu,$$
with $c_{\ulambda,\ulambda}\neq 0$ and such that if $\ulambda \lhd \umu$ then  $c_{\ulambda,\umu}= 0$.

\end{Cor}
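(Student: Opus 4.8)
The plan is to deduce this from Theorem \ref{main} together with the combinatorial characterization of staggered multipartitions in the asymptotic case developed in \S\ref{kl}. First I would recall that under the hypothesis $s_i - s_{i-1} \geq n-1+e$ the action of $f_{\mathfrak{i}}$ on the Fock space expands a multipartition $\unu$ as the sum over all ways of adding an $\mathfrak{i}$-node, and by the standard triangularity of crystal operators (see \cite[Ch.6]{GJ}) the coefficients behave well with respect to the dominance order: if $\umu$ is obtained from $\unu$ by adding an $\mathfrak{i}$-node in component $c$ and row $a$, then $\umu \unlhd \unu'$ for the multipartition $\unu'$ obtained by adding the highest possible $\mathfrak{i}$-node. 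This is the source of the ``only if'' direction, which as noted is already in \cite[Prop. 5.9]{GL}: if $\ulambda$ appears in $f_{\mathfrak{i}_1}\cdots f_{\mathfrak{i}_n}.\uemptyset$ with $c_{\ulambda,\umu}=0$ whenever $\ulambda \lhd \umu$, then $\ulambda$ must be maximal in a suitable sense and one shows recursively that the nodes one adds are forced to be the good nodes, so $\ulambda$ is Kleshchev (= Uglov here).

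For the ``if'' direction, which is the genuinely new content, I would argue as follows. Suppose $\ulambda$ is an Uglov $l$-partition with respect to $(e,{\bf s})$. By Theorem \ref{main} it is a staggered $l$-partition, and since ${\bf s}$ is asymptotic, the simplified description of \S\ref{kl} applies: $\ulambda$ admits a $\mathcal{K}$-staggered sequence of $\mathfrak{j}$-nodes, and after removing the removable $\mathfrak{j}$-nodes of this sequence we again obtain a staggered (hence Uglov) $l$-partition of smaller rank. Let $\ulambda'$ be this smaller multipartition, let $r$ be the number of removable nodes removed, and let $\mathfrak{j}$ be their common residue. By induction on $n$ there is a sequence $\mathfrak{i}_1,\ldots,\mathfrak{i}_{n-r}$ with $f_{\mathfrak{i}_1}\cdots f_{\mathfrak{i}_{n-r}}.\uemptyset = \sum_{\unu} c_{\ulambda',\unu}\unu$, with $c_{\ulambda',\ulambda'}\neq 0$ and $c_{\ulambda',\unu}=0$ whenever $\ulambda' \lhd \unu$. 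I would then set $\mathfrak{i}_{n-r+1}=\cdots=\mathfrak{i}_n = \mathfrak{j}$ and apply $f_{\mathfrak{j}}^{r}$ to this expression. The key point to verify is that in $f_{\mathfrak{j}}^{r}.\left(\sum_{\unu} c_{\ulambda',\unu}\unu\right)$ the coefficient of $\ulambda$ is nonzero and that no multipartition strictly dominating $\ulambda$ occurs.

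The nonvanishing of $c_{\ulambda,\ulambda}$ is immediate: starting from $\ulambda'$ (which has nonzero coefficient), adding the $r$ removable $\mathfrak{j}$-nodes of the $\mathcal{K}$-staggered sequence one at a time — in increasing order with respect to $<_{\mathcal{K}}$ — produces $\ulambda$, and since the $f_{\mathfrak{j}}$'s contribute positive coefficients (the Fock space action has nonnegative structure constants in this normalization), the coefficient of $\ulambda$ is a positive combination, hence nonzero. The dominance control is where the definition of $\mathcal{K}$-staggered sequences is exactly what is needed: I would show that if $\umu$ is obtained from $\ulambda'$ by adding $r$ addable $\mathfrak{j}$-nodes and $\umu \not\unlhd \ulambda$, then in fact $\umu$ never dominates $\ulambda$ strictly among the multipartitions actually occurring — more precisely, that among all multipartitions reachable from the support of $f_{\mathfrak{i}_1}\cdots f_{\mathfrak{i}_{n-r}}.\uemptyset$ by adding $r$ $\mathfrak{j}$-nodes, the ones dominating $\ulambda$ are exactly those equal to $\ulambda$. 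This uses that the nodes of the $\mathcal{K}$-staggered sequence are the \emph{highest} available $\mathfrak{j}$-nodes in the appropriate sense (the third bullet in the definition: there is no $\eta \in \mathcal{K}_{\mathfrak{j}}(\ulambda)$ with $\eta >_{\mathcal{K}} \gamma_r$), combined with the inductive hypothesis $c_{\ulambda',\unu}=0$ for $\ulambda' \lhd \unu$ and the asymptotic hypothesis which makes the order $<_{\mathcal{K}}$ agree with the component-then-row order of \cite{DJMa} used in the definition of the dominance order. The main obstacle I anticipate is precisely this bookkeeping: translating ``the $\mathcal{K}$-staggered sequence consists of maximal nodes'' into ``no dominating multipartition appears,'' carefully tracking how adding $\mathfrak{j}$-nodes to a possibly non-dominant $\unu$ in the support could a priori produce something dominating $\ulambda$, and ruling this out using the triangularity of the $f_{\mathfrak{j}}$-action together with the structure of staggered sequences. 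Everything else is a routine induction.
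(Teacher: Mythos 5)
Your proof is essentially identical to the paper's: both handle the direction (Uglov $\Rightarrow$ condition) by removing the removable $\mathfrak{j}$-nodes of a staggered sequence, invoking the inductive hypothesis on the resulting $\ulambda'$, applying $f_{\mathfrak{j}}^r$, and then deducing the dominance triangularity from the maximality of the staggered nodes combined with the inductive triangularity for $\ulambda'$. One small slip: since $f_{\mathfrak{j}}^r$ is applied \emph{last} to $f_{\mathfrak{i}_1}\cdots f_{\mathfrak{i}_{n-r}}.\uemptyset$, the repeated residue $\mathfrak{j}$ should occupy positions $1,\ldots,r$ (shifting the inductive sequence to positions $r+1,\ldots,n$), not $n-r+1,\ldots,n$ as you wrote; the paper sets $\mathfrak{i}_1=\cdots=\mathfrak{i}_r=\mathfrak{j}$ accordingly.
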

\begin{proof}
The fact that $c_{\ulambda,\ulambda}\neq 0$ is trivial. We show the rest of the proposition 
 by induction. Let  
 $(\gamma_1,\ldots,\gamma_r)$ be the removable nodes of the  staggered sequence of nodes of $\ulambda$ . We denote by $\mathfrak{j}\in \mathbb{Z}/e\mathbb{Z}$ the common residue of these nodes. Let $\ulambda'$ be the staggered  $l$-partition of $n-r$ obtained by removing the nodes 
 $(\gamma_1,\ldots,\gamma_r)$ from $\ulambda$. We thus have $\mathfrak{i}_1=\ldots=\mathfrak{i}_r=\mathfrak{j}$.  By induction, we have:
 $$f_{\mathfrak{i}_{r+1}}\ldots f_{\mathfrak{i}_{n}}.\uemptyset =\sum_{\umu'\in \Pi^l (n-r)}  c_{\ulambda',\umu'} \umu',$$
with $c_{\ulambda',\ulambda'}\neq 0$ and such that if $\ulambda' \lhd \umu'$ then  $c_{\ulambda',\umu'}= 0$.
If $\umu$ appears in the expansion of  $f_{\mathfrak{j}}^r \ulambda'$ with non zero coefficient, then by the definition of the staggered sequence, it is clear that we have $\umu\unlhd \ulambda$. 
Otherwise,  $\umu$ appears in the expansion of  $f_{\mathfrak{j}}^r \umu'$ with non zero coefficient 
 with  $\umu'$ not greater than $\ulambda'$ with respect to the dominance order and with $c_{\lambda',\mu'}$.

Assume  that $\ulambda  \lhd \umu$ then  there exist $c\in \{1,\ldots,l\}$ and $j\in \mathbb{Z}_{>0}$ such that $\lambda^m=\mu^m$ if $m<c$, $\lambda^c_{k}=\mu^c_{k}$ for $k\leq j$ and $\mu_j^c>\lambda_j^c$.
 Now the nodes $(\gamma_1,\ldots,\gamma_r)$ are the greater removable nodes of $\ulambda$ with respect to $<$. 
 This implies that necessarily $\ulambda' \lhd \umu'$, a contradiction.

\end{proof}

\begin{Rem}
To translate the above result in terms of the conventions used in   \cite{BK,DJM,Hu} (see Remark \ref{conven}), note  that an $l$-partition $\ulambda$ is greater to another one  $\umu$ with respect to the dominance order if and only if
 $\umu'$ is greater to $\ulambda'$ with respect to the dominance order. 
\end{Rem}

\begin{Rem}
It is natural to ask if the above corollary can be proved for the whole class of Uglov $l$-partitions.  In general, one can see that the above proof cannot be generalized. The case $l=2$ can however be treated using different techniques. This will be exposed in a future work. 

\end{Rem}

\subsection{A generalized Lascoux-Leclerc-Thibon algorithm}\label{llt}

We finish by giving an application of our main theorem. This is inspired by \cite{LLT} and \cite[Ch. 6]{GJ}.  
Let $v$ be an indeterminate and consider the affine quantum algebra $\mathcal{U}_v (\widehat{\mathfrak{sl}}_e)$ of type $A$. One can quantize the action of $\mathfrak{g}$ on the Fock space to obtain an action of 
 $\mathcal{U}_v (\widehat{\mathfrak{sl}}_e)$ on a quantized version of the Fock space: the $\mathbb{Q}(v)$-vector space with basis the set of  all the $l$-partitions $\Pi^l$. We refer to \cite[Ch. 6]{GJ} for details. 
 For each $\mathfrak{i}\in \mathbb{Z}/e\mathbb{Z}$ and $k\in \mathbb{Z}_{>0}$, let us denote by $f^{(k)}_i$ the associated divided power 
in $\mathcal{U}_v (\widehat{\mathfrak{sl}}_e)$ (see \cite[\S 6.1.5]{GJ}).

 Let $\preceq$ be the lexicographic order associated to $l$-partitions. This is defined as follows, let $(\ulambda , \umu)\in \Pi^l (n)^2$ with $n\in \mathbb{Z}_{>0}$ then we have $\ulambda \prec \umu$ if there exists $j\in \{1,\ldots,l)$ and $k\in \mathbb{Z}_{>0}$ such that $\lambda^t=\mu^t$ for all $t\in \{1,\ldots,j-1\}$, 
  $\lambda^j_m=\mu^j_m$ for all $m\in \{1,\ldots,k-1\}$ and $\lambda^j_k<\mu^j_k$. It is clear that  
 $\ulambda \lhd \umu$ implies 
 $\ulambda \prec \umu$.

 Now let 
  ${\bf s}=(s_1,\ldots,s_l)\in \mathbb{Z}^l$ be  such that 
 $s_{i}-s_{i-1}\geq n-1+e$ for all $i=2,\ldots,l$ and let  $\ulambda$ be an Uglov  $l$-partition (and thus an Uglov $l$-partition.)  
  We write :
  $$\underbrace{\mathfrak{i}_1,\ldots,\mathfrak{i}_{1}}_{a_1\text{ times}}
 \underbrace{\mathfrak{i}_2,\ldots,\mathfrak{i}_{2}}_{a_2\text{ times}}
 \ldots, \underbrace{\mathfrak{i}_m,\ldots,\mathfrak{i}_{m}}_{a_m\text{ times}}$$
for the associated staggered sequence of residues where $\mathfrak{i}_{j}\in \mathbb{Z}/e\mathbb{Z}$ and $a_j\in \mathbb{Z}_{>0}$ for all 
 $j\in \{1,\ldots,m\}$ and where we assume that $\mathfrak{i}_s \neq \mathfrak{i}_{s+1}$ for all $s\in \{1,\ldots,m-1\}$. By  Corollary \ref{djmc} together with the same proof as \cite[Thm 6.4.2]{GJ}, we obtain that 
 $$f_{\mathfrak{i}_1}^{(a_1)}\ldots f_{\mathfrak{i}_m}^{(a_m)}.\uemptyset =\ulambda + 
 \sum_{\umu\prec \ulambda}  c_{\ulambda,\umu} (v) \umu,$$
for Laurent polynomials  $c_{\ulambda,\umu} (v)$. As a consequence, mimicking the algorithm \cite[\S 6.4.9]{GJ}
 leads to an algorithm for the computation of the associated Kashiwara-Lusztig canonical basis which is a direct generalization of the LLT algorithm \cite{LLT}.

\vspace{0.5cm}
{\bf Address:}\\
\textsc{Nicolas Jacon}, Universit\'e de Reims Champagne-Ardenne, UFR Sciences exactes et naturelles, Laboratoire de Math\'ematiques EA 4535
Moulin de la Housse BP 1039, 51100 Reims, FRANCE\\  \emph{nicolas.jacon@univ-reims.fr}\\

\end{document}